\documentclass[12pt,english]{article}
\usepackage[latin9]{inputenc}
\usepackage{geometry}
\geometry{verbose,tmargin=1in,bmargin=1in,lmargin=1in,rmargin=1in}
\usepackage{booktabs}
\usepackage{amsthm}
\usepackage{amsmath}
\usepackage{amssymb}
\usepackage{graphicx}

\makeatletter

\providecommand{\tabularnewline}{\\}

\numberwithin{equation}{section}
\numberwithin{figure}{section}
\theoremstyle{plain}
\newtheorem{thm}{\protect\theoremname}[section]
  \theoremstyle{plain}
  \newtheorem{cor}[thm]{\protect\corollaryname}
 \theoremstyle{definition}
 \newtheorem*{defn*}{\protect\definitionname}
  \theoremstyle{remark}
  \newtheorem{rem}[thm]{\protect\remarkname}
  \theoremstyle{plain}
  \newtheorem{lem}[thm]{\protect\lemmaname}
  \theoremstyle{remark}
  \newtheorem*{rem*}{\protect\remarkname}
  \theoremstyle{remark}
  \newtheorem*{notation*}{\protect\notationname}
  \theoremstyle{plain}
  \newtheorem{prop}[thm]{\protect\propositionname}
  \theoremstyle{definition}
  \newtheorem*{example*}{\protect\examplename}


\usepackage{amsfonts}
\usepackage{mathdots}
\usepackage{asymptote}
\usepackage{tikz}
\usepackage{afterpage}
\usepackage{booktabs}
\usepackage{stmaryrd}

\newcommand{\CollatzGraph}{\mathcal{G}}
\newcommand{\overmod}[1]{\underset{#1}{\equiv}}
\DeclareMathOperator{\lcm}{lcm}
\DeclareMathOperator{\Greedy}{Greedy}
\DeclareMathOperator{\AGL}{AGL}
\newcommand{\rat}{\rightarrowtriangle}

\DeclareGraphicsRule{*}{mps}{*}{}

\title{Strongly sufficient sets and the distribution of \\ arithmetic sequences in the $3x+1$ graph }
\author{
Keenan Monks 
\thanks{email: \texttt{monks@college.harvard.edu}} \\
Harvard University \\ Cambridge, MA 02138
\and
Kenneth G. Monks \thanks{email: \texttt{monks@scranton.edu} ; Corresponding author} \\
University of Scranton \\ Scranton, PA 18510
\and
Kenneth M. Monks \thanks{email: \texttt{monks@math.colostate.edu}} \\
Colorado State University \\ Fort Collins, CO 80523
\and
Maria Monks \thanks{email: \texttt{monks@math.berkeley.edu}} \\
University of California \\
Berkeley, CA 94720
}

\date{\today}


\global\long\def\xcong#1{\underset{#1}{\equiv}}

\global\long\def\bbN{\mathbb{N}}
\global\long\def\bbZ{\mathbb{Z}}

\makeatother

\usepackage{babel}
  \providecommand{\corollaryname}{Corollary}
  \providecommand{\definitionname}{Definition}
  \providecommand{\examplename}{Example}
  \providecommand{\lemmaname}{Lemma}
  \providecommand{\notationname}{Notation}
  \providecommand{\propositionname}{Proposition}
  \providecommand{\remarkname}{Remark}
\providecommand{\theoremname}{Theorem}

\begin{document}
\renewcommand{\thefootnote}{\fnsymbol{footnote}}
\footnotetext{\emph{Keywords}: Collatz conjecture, arithmetic sequences, group actions, sufficient sets, 3x+1 digraph}
\renewcommand{\thefootnote}{\arabic{footnote}}  \maketitle
\begin{abstract}
The $3x+1$ Conjecture asserts that the $T$-orbit of every positive
integer contains $1$, where $T$ maps $x\mapsto x/2$ for $x$ even
and $x\mapsto(3x+1)/2$ for $x$ odd. A set $S$ of positive integers
is \textit{sufficient} if the orbit of each positive integer intersects
the orbit of some member of $S$. In \cite{Kenny} it was shown that
every arithmetic sequence is sufficient. 

In this paper we further investigate the concept of sufficiency. We
construct sufficient sets of arbitrarily low asymptotic density in
the natural numbers. We determine the structure of the groups generated
by the maps $x\mapsto x/2$ and $x\mapsto(3x+1)/2$ modulo $b$ for
$b$ relatively prime to $6$, and study the action of these groups
on the directed graph associated to the $3x+1$ dynamical system.
From this we obtain information about the distribution of arithmetic
sequences and obtain surprising new results about certain arithmetic
sequences. For example, we show that the forward $T$-orbit of every
positive integer contains an element congruent to $2\bmod9$, and
every non-trivial cycle and divergent orbit contains an element congruent
to $20\bmod27$. We generalize these results to find many other sets
that are strongly sufficient in this way. 

Finally, we show that the $3x+1$ digraph exhibits a surprising and
beautiful self-duality modulo $2^{n}$ for any $n$, and prove that
it does not have this property for any other modulus. We then use
deeper previous results to construct additional families of nontrivial
strongly sufficient sets by showing that for any $k<n$, one can ``fold''
the digraph modulo $2^{n}$ onto the digraph modulo $2^{k}$ in a
natural way. 
\end{abstract}

\section{Introduction}

The $3x+1$ Conjecture, also known as the Collatz Conjecture, is a
famous open problem in discrete dynamics. Attributed to L. Collatz
in the 1930's, the conjecture states that if $T:\mathbb{Z}\to\mathbb{Z}$
is defined to be 
\[
T(x)=\begin{cases}
\frac{x}{2} & x\text{ is even}\\
(3x+1)/2 & x\text{ is odd}
\end{cases},
\]
then for any positive integer $x$, there is a nonnegative integer
$k$ for which $T^{k}(x)=1$. In other words, the \textit{$T$}\textit{\emph{-orbit}}
of $x$ (i.e. the sequence $x,T(x),T(T(x)),\ldots$) contains the
number $1$ among its elements.

Historically, the Collatz problem has been broken down into two smaller
conjectures: 
\begin{quote}
\textbf{Nontrivial Cycles Conjecture:} There are no $T$-cycles of
positive integers other than the cycle containing $1$. 
\end{quote}

\begin{quote}
\textbf{Divergent Orbits Conjecture:} There are no divergent $T$-orbits
of positive integers. 
\end{quote}
Together, these two statements suffice to prove the $3x+1$ conjecture.
Both remain unresolved.

There has recently been progress towards reducing the $3x+1$ problem
to a seemingly simpler problem. We say that positive integers $x$
and $y$ \textit{merge} if there exist nonnegative integers $k$ and
$j$ for which $T^{k}(x)=T^{j}(y)$. A set of positive integers $S$
is said to be \textit{sufficient} if for every positive integer $x$,
there is an element $y\in S$ that merges with $x$. Notice that to
prove the $3x+1$ conjecture, it suffices to show that the $T$-orbit
of every element of some sufficient set $S$ contains $1$ since every
integer that merges with an element of $S$ must also enter the cycle
$\overline{1,2}$ as well. In \cite{Kenny}, the third author shows
that every arithmetic sequence is sufficient.

We can visualize these notions by drawing a directed graph associated
to the dynamical system $T:\mathbb{Z}\to\mathbb{Z}$. Let $T_{0}(x)=x/2$
and $T_{1}(x)=(3x+1)/2$, and define the $3x+1$\textit{ graph} $\mathcal{G}$
to be the two-colored directed graph on the positive integers having
a black edge from $x$ to $z$ if $T_{0}(x)=z$, and a red edge (which
are also dashed in the images in this paper for the benefit of those
reading a black and white printout) from $x$ to $z$ if $T_{1}(x)=z$.
(See Figure \ref{Fig:G}.) Then two integers merge if and only if
they are in the same connected component of $\mathcal{G}$, and a
sufficient set is a set of nodes which intersects every connected
component of $\mathcal{G}$. The $3x+1$ conjecture is true if and
only if $\mathcal{G}$ is connected.

\begin{figure}[h]
\begin{centering}
\includegraphics{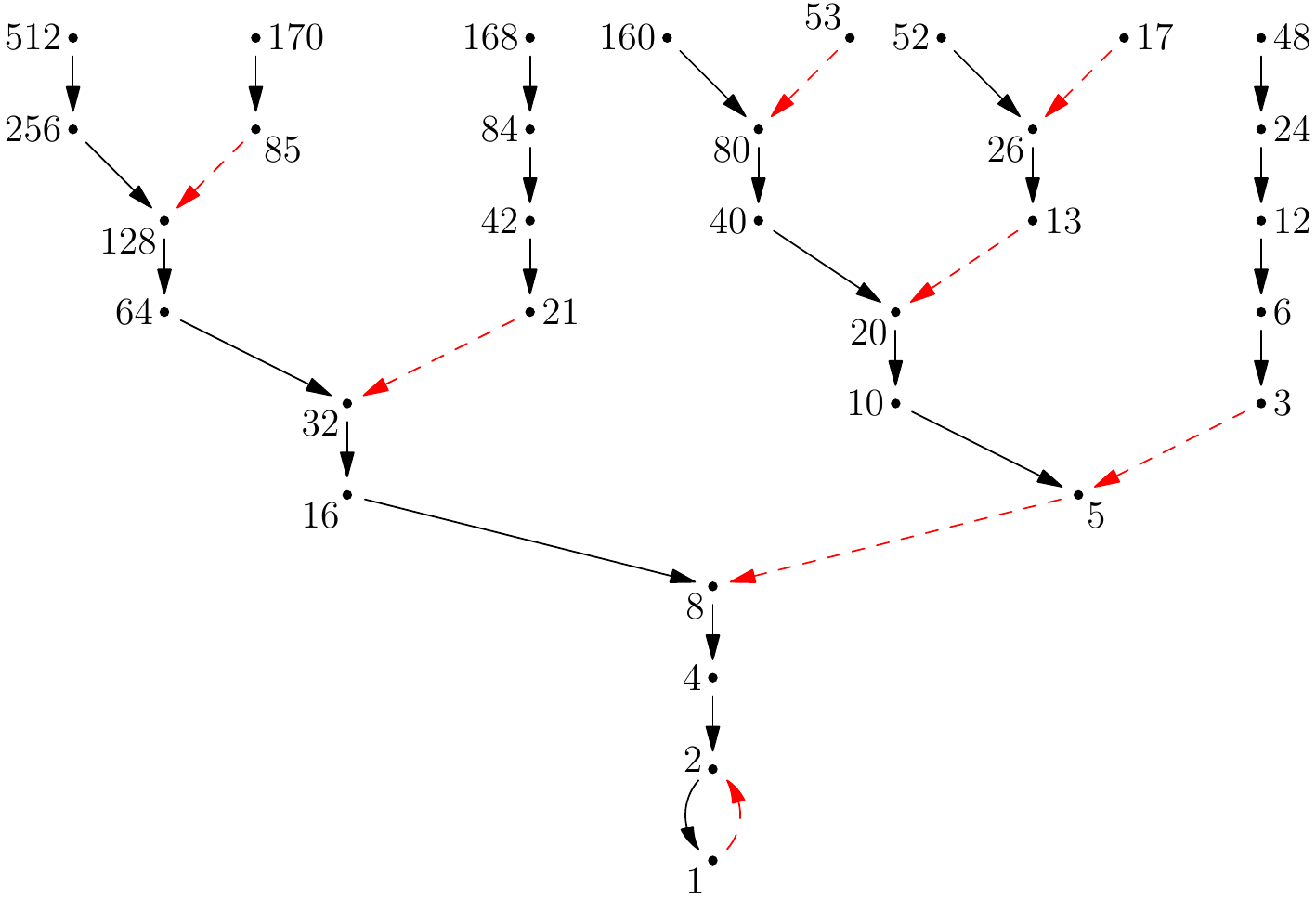} 
\par\end{centering}

\caption{\label{Fig:G} A portion of the $3x+1$ graph $\mathcal{G}$ near
$1$.}
\end{figure}

In this paper we undertake a deeper investigation into the distribution
of arithmetic sequences in the $3x+1$ graph and properties of sufficient
sets in general. Define a \textit{forward tracing path} to be a path
in $\mathcal{G}$ along the directed arrows (which is simply an initial
segment of a $T$-orbit), and a \textit{back tracing path} to be a
path in $\mathcal{G}$ against the direction of these arrows.

For brevity, we write ``$a\overmod{b}c$'' in place of ``$a\equiv c\pmod{b}$''
throughout. Using this notation, we recall the well-known fact that
if $x\overmod{3}0$, then the only way to form a back tracing path
starting from $x$ is by applying $T_{0}^{-1}$ repeatedly, forming
a single reverse chain of black arrows in $\mathcal{G}$. Moreover,
every element of this chain is also divisible by $3$. Conversely,
every positive integer $x$ forward traces to a number that is not
divisible by $3$, at which point all future points in its orbit are
not divisible by $3$. Thus, the back tracing paths from multiples
of $3$ are well-understood, and for convenience we also define the
\textit{pruned }$3x+1$\textit{ Graph}, denoted $\widetilde{\mathcal{G}}$,
to be the subgraph of $\mathcal{G}$ consisting of the positive integers
relatively prime to $3$. A portion of the $ $$\widetilde{\mathcal{G}}$
is illustrated in Figure \ref{Fig:GPruned}.

In \cite{Kenny}, in order to prove that every arithmetic sequence
is sufficient, the third author shows that for any positive integers
$a$ and $d$ and for every node $x$ in the pruned $3x+1$ graph
$\widetilde{\mathcal{G}}$, there is a back tracing path from $x$
to some $y$ in $\mathcal{G}$ with $y\in\{a+dn\mid n\in\mathbb{Z}\}$.
(Since any $x$ in the general $3x+1$ graph has a forward tracing
path to some $x'$ in the pruned graph, this shows that every $x$
in $\mathcal{G}$ merges with some $y$ in the given arithmetic sequence.)
In section \ref{efficient}, we strengthen these results by finding
bounds on the minimum number of red arrows needed to back trace from
any positive integer $x$ to an integer $y$ in a desired arithmetic
sequence. These results rely on an understanding of the $3x+1$ groups
$G_{b}$ generated by $T_{0}$ and $T_{1}$ modulo an integer $b$
relatively prime to $6$. We completely classify these groups in section
\ref{groups}.

For specific moduli $d$, we can obtain even stronger and more surprising
results. In section \ref{inverselimits}, we describe infinite back
tracing paths as elements of an inverse limit and show that every
such back tracing path must contain an integer congruent to $2$ mod
$9$. We use similar methods to show that in fact every $T$-orbit
of positive integers must also contain an element congruent to $2$
mod $9$. For this reason, we say that the set of integers congruent
to $2$ mod $9$ is \textit{strongly sufficient}.

In Section \ref{forward}, we define $\Gamma_{d}$ to be the finite
directed graph obtained by taking the $3x+1$ graph $\mathcal{G}$
mod $d$. We use these graphs to determine graph-theoretic criteria
for a set to be strongly sufficient, and we provide several families
of such sets $S$ that must intersect every nontrivial $T$-orbit
or infinite back tracing path. We also demonstrate that finding strongly
sufficient sets is a plausible way to approach the Nontrivial Cycles
Conjecture.

Finally, in Section \ref{DualityFolding}, we show that the graphs
$\Gamma_{2^{n}}$ exhibit a surprising and beautiful self-duality
(given by a map first defined in \cite{Dad}) and that these are the
only $\Gamma_{d}$ having this property. We also use results from
\cite{Maria} and \cite{Keenan} to show that for any $k<n$, one
can fold $\Gamma_{2^{n}}$ onto $\Gamma_{2^{k}}$ in a natural way.
We combine these deeper tools with our results on strong sufficiency
to obtain an infinite family of strongly sufficient sets consisting
of unions of residue classes modulo a power of $2$.

\section{Sparse sufficient sets}

Since every arithmetic sequence is sufficient, the arithmetic sequences
form a family of sufficient sets with members having arbitrarily small
positive density in the integers. It is natural to ask if there is
a sufficient set of density zero in the positive integers. We answer
this question in the affirmative as follows.
\begin{thm}
For any function $f:\mathbb{N}\to\mathbb{N}$ and any positive integers
$a$ and $d$, the set of integers 
\[
\{2^{f(n)}(a+dn)\mid n\in\mathbb{N}\}
\]
 is a sufficient set. \end{thm}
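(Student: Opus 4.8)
The plan is to reduce the claim to the sufficiency of arithmetic sequences established in \cite{Kenny}, exploiting the fact that in $\mathcal{G}$ the map $T_0\colon x\mapsto x/2$ is a forward edge. The guiding observation is that for every $n$ the node $2^{f(n)}(a+dn)$ and the node $a+dn$ lie in the same connected component of $\mathcal{G}$: since $2^{f(n)}(a+dn)$ is even (when $f(n)\geq 1$) and repeated halving is exactly repeated application of $T_0$, we have $T_0^{f(n)}\big(2^{f(n)}(a+dn)\big)=a+dn$, so this node simply forward-traces down to $a+dn$. Thus replacing each term $a+dn$ by $2^{f(n)}(a+dn)$ leaves unchanged the collection of components that the set meets.

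Concretely, I would argue as follows. First, fix an arbitrary positive integer $x$. The set $A=\{a+dn\mid n\in\mathbb{N}\}$ is an arithmetic sequence with $a,d$ positive, hence sufficient by \cite{Kenny}, so there is an index $n_0\in\mathbb{N}$ for which $x$ merges with $m:=a+dn_0$. Second, apply the observation above to the single index $n_0$: the element $2^{f(n_0)}m$ of our set forward-traces to $m$ under $f(n_0)$ applications of $T_0$, hence merges with $m$. Third, invoke transitivity of the merging relation (being in a common connected component of $\mathcal{G}$ is an equivalence relation, which one checks directly by aligning the two $T$-orbits) to conclude that $x$ merges with $2^{f(n_0)}(a+dn_0)$. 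Since $x$ was arbitrary, every positive integer merges with some member of $\{2^{f(n)}(a+dn)\mid n\in\mathbb{N}\}$, which is exactly sufficiency.

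I do not expect a serious obstacle here; the content is the single conceptual remark that a power of $2$ can be peeled off by forward tracing, after which \cite{Kenny} does the real work. The only points requiring care are bookkeeping: I must pair the factor $2^{f(n)}$ with the term $a+dn$ carrying the \emph{same} index $n$, so that the components covered by $\{2^{f(n)}(a+dn)\}$ are in exact correspondence with those covered by the plain sequence $\{a+dn\}$, and I should dispose of the degenerate case $f(n_0)=0$ (if $0\in\mathbb{N}$) trivially, since then $2^{f(n_0)}m=m$ outright. It is worth noting in passing that, although the theorem holds for every $f$, choosing $f$ to grow quickly forces the set to have asymptotic density zero, which is the motivating payoff flagged before the statement.
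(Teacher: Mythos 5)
Your proposal is correct and is essentially identical to the paper's own argument: both invoke the sufficiency of the arithmetic sequence $\{a+dn\}$ from \cite{Kenny}, pick the index $N$ whose term merges with the given $x$, and observe that $2^{f(N)}(a+dN)$ maps to $a+dN$ under $f(N)$ iterations of $T$, hence merges with $x$. The extra care you take over the index pairing and the case $f(N)=0$ is fine but not needed beyond what the paper already says.
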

\begin{proof}
We know that the set $\{a+dn\mid n\in\mathbb{N}\}$ is sufficient.
So, given a positive integer $x$, there is a number of the form $a+dN$
that merges with $x$. Thus, the positive integer $2^{f(N)}(a+dN)$,
which maps to $a+dN$ after $f(N)$ iterations of $T$, also merges
with $x$. This completes the proof.
\end{proof}
By taking $f(n)$ to be sufficiently large relative to $n$, we can
use this to produce infinitely many sufficient sets of density zero
in the positive integers. We state one family of these below.
\begin{cor}
For any fixed $a$ and $d$, the sequence $(a+dn)\cdot2^{n}$ is a
sufficient set with asymptotic density zero in the positive integers.
\end{cor}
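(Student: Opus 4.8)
The plan is to deduce this corollary directly from the theorem just proved, then handle the density claim separately by a routine estimate. The theorem states that for any $f:\mathbb{N}\to\mathbb{N}$ the set $\{2^{f(n)}(a+dn)\mid n\in\mathbb{N}\}$ is sufficient. The corollary's sequence $(a+dn)\cdot 2^n$ is exactly this set with the choice $f(n)=n$, so sufficiency is immediate from the theorem with no further work.

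The only substantive part is the asymptotic density claim. I would argue that the elements of the sequence grow so fast that they occupy a vanishing fraction of the integers. Writing $S=\{(a+dn)2^n\mid n\in\mathbb{N}\}$, the $n$-th term is at least $2^n$, so among the first $N$ positive integers the number of elements of $S$ is at most the number of indices $n$ with $(a+dn)2^n\le N$. Since $(a+dn)2^n\ge 2^n$, any such index satisfies $2^n\le N$, i.e. $n\le\log_2 N$. Hence $|S\cap\{1,\ldots,N\}|\le \log_2 N + 1$, and therefore the density satisfies
\[
\lim_{N\to\infty}\frac{|S\cap\{1,\ldots,N\}|}{N}\le\lim_{N\to\infty}\frac{\log_2 N+1}{N}=0.
\]
Since the density is a limit of nonnegative quantities, it equals $0$.

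I do not expect any genuine obstacle here; both halves of the statement are short. The sufficiency is a one-line specialization of the theorem, and the density bound is an elementary counting estimate using only that the terms grow at least geometrically in $n$. The one point worth stating carefully is that the count $|S\cap\{1,\ldots,N\}|$ is bounded by the number of valid indices rather than by something larger (this is fine since distinct indices give distinct, strictly increasing terms), so the logarithmic bound is honest. The whole argument reduces to observing that a geometrically growing sequence has logarithmically many terms below $N$, which is more than enough to force density zero.
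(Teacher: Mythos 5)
Your proof is correct and follows exactly the route the paper intends: the paper states the corollary as an immediate specialization of the preceding theorem (taking $f(n)=n$) and leaves the density computation implicit, which is precisely the elementary logarithmic counting bound you supply. Nothing further is needed.
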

Thus, to prove the $3x+1$ conjecture, it suffices to show that the
$T$-orbit of every number in the density-zero set $\{(a+dn)\cdot2^{n}\mid n>0\}$
contains $1$. This method can also be used to find arbitrarily sparse
sufficient sets containing only odd numbers (for example, the set
$\left\{ \left(2^{2f(n)+1}(a+3dn)-1\right)/3\mid n\in\mathbb{N}\right\} $
for $a\xcong32$ and $f:\bbN\to\bbZ_{+}$). Notice, however, that
the elements of any such sufficient sets eventually map to $a+dn$,
so one still effectively needs to show that the elements of the arithmetic
sequence $\{a+dn\}$ map to $1$ under $T$. Thus we turn our attention
to this problem next by investigating the actual distribution of such
sequences in the $3x+1$ graph.

\section{Classification of the groups $G_{b}$}

\label{groups}

For any positive integer $b$ relatively prime to $6$, the functions
$T_{0}$ and $T_{1}$ act as permutations on $\mathbb{Z}/b\mathbb{Z}$.
We begin by completely classifying the permutation groups $G_{b}$
generated by these two permutations.

Let $C_{r}$ denote the cyclic group of order $r$. Also let 
\[
\AGL(1,b)=\{x\mapsto cx+d\mid d\in\mathbb{Z}/b\mathbb{Z},c\in\left(\mathbb{Z}/b\mathbb{Z}\right)^{*}\}
\]
be the group of affine maps mod $b$ under composition, and note that
$G_{b}$ can be viewed as the subgroup of $\AGL(1,b)$ generated by
$T_{0}(x)=x/2$ and $T_{1}(x)=\left(3x+1\right)/2$. Moreover, the
subgroup $\{x\mapsto x+a\mid a\in\mathbb{Z}/b\mathbb{Z}\}$ is isomorphic
to the cyclic group $C_{b}$, and is a normal subgroup of $\AGL(1,b)$.
It will also be useful to consider the element $P\left(x\right)=x+1$
of $G_{b}$.
\begin{thm}
\label{structure} Let $b$ be a positive integer relatively prime
to 2 and 3. Write the prime factorization $b=p_{1}^{e_{1}}p_{2}^{e_{2}}\cdots p_{n}^{e_{n}}$.
For all $i\in\{1,\ldots,n\}$, let $s_{i}$ be the multiplicative
order of $2\bmod p_{i}^{e_{i}}$, let $t_{i}$ be the multiplicative
order of $3\bmod p_{i}^{e_{i}}$, and let $a_{i}=\lcm(s_{i},t_{i})$.
Then 
\[
G_{b}\cong C_{b}\rtimes M
\]
 where 
\[
M=C_{a_{1}}\times C_{a_{2}}\times\cdots\times C_{a_{n}}.
\]
 
\end{thm}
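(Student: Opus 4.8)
The plan is to locate $G_{b}$ inside the split extension $\AGL(1,b)\cong C_{b}\rtimes(\mathbb{Z}/b\mathbb{Z})^{*}$ and to pin down its two ``coordinates'' separately. Writing an affine map as a pair $(c,d):x\mapsto cx+d$, composition is $(c_{1},d_{1})(c_{2},d_{2})=(c_{1}c_{2},\,c_{1}d_{2}+d_{1})$, the translations $\{(1,d)\}$ form the normal subgroup $C_{b}$, and the linear-part map $\phi:(c,d)\mapsto c$ is a homomorphism onto $(\mathbb{Z}/b\mathbb{Z})^{*}$ with kernel $C_{b}$. Since $T_{0}=(2^{-1},0)$ and $T_{1}=(3\cdot2^{-1},\,2^{-1})$, we get $\phi(G_{b})=\langle 2^{-1},3\cdot2^{-1}\rangle=\langle2,3\rangle$, the multiplicative subgroup of $(\mathbb{Z}/b\mathbb{Z})^{*}$ generated by $2$ and $3$. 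The argument then splits into (i) showing $C_{b}\le G_{b}$, and (ii) identifying the abstract isomorphism type of $\langle2,3\rangle$. Granting (i), the extension splits for free: since $C_{b}=\ker\phi\le G_{b}$ we have $G_{b}=\phi^{-1}(\langle2,3\rangle)=C_{b}\rtimes\{(c,0):c\in\langle2,3\rangle\}$, an internal semidirect product whose complement is isomorphic to $\langle2,3\rangle$. Thus $G_{b}\cong C_{b}\rtimes M$ with $M\cong\langle2,3\rangle$, and the theorem reduces to (i) together with the claim $\langle2,3\rangle\cong\prod_{i}C_{a_{i}}$.

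For (i) I would manufacture a single translation \emph{by a unit} and then take powers. A direct computation gives $T_{0}^{-1}T_{1}=(3,1)$ and $T_{1}T_{0}^{-1}=(3,2^{-1})$, so the short word $(T_{0}^{-1}T_{1})(T_{1}T_{0}^{-1})^{-1}$ equals $(1,2^{-1})$, a translation by the unit $2^{-1}$. Since $2^{-1}$ generates $\mathbb{Z}/b\mathbb{Z}$ additively, its powers fill out all of $C_{b}$; in particular $P(x)=x+1=(1,2^{-1})^{2}$ lies in $G_{b}$, confirming both $C_{b}\le G_{b}$ and that the promised element $P$ is where it is claimed to be.

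The substance of the theorem is (ii), the structure of $H:=\langle2,3\rangle$. By the Chinese Remainder Theorem $(\mathbb{Z}/b\mathbb{Z})^{*}\cong\prod_{i}(\mathbb{Z}/p_{i}^{e_{i}}\mathbb{Z})^{*}$, and each factor is cyclic because the $p_{i}$ are odd. In the $i$-th factor the image of $H$ is the subgroup generated by two elements of orders $s_{i}$ and $t_{i}$, which in a cyclic group is the unique subgroup of order $\operatorname{lcm}(s_{i},t_{i})=a_{i}$, namely $C_{a_{i}}$. This realizes $H$ as a subdirect product inside $\prod_{i}C_{a_{i}}$, and what remains is to prove that this subdirect embedding is surjective, i.e. $H\cong\prod_{i}C_{a_{i}}$.

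Surjectivity is the crux and the step I expect to fight hardest for. A subgroup that surjects onto each factor of a direct product need not be the whole product once the factor orders $a_{i}$ share common divisors, so one must show that the fixed elements $2$ and $3$ carry no hidden multiplicative relation linking distinct prime-power factors. I would phrase the obstruction through characters: $H$ fails to be all of $\prod_{i}C_{a_{i}}$ exactly when some nontrivial character $\chi=(\chi_{i})_{i}$ of $\prod_{i}C_{a_{i}}$ satisfies $\prod_{i}\chi_{i}(2)=\prod_{i}\chi_{i}(3)=1$, so the goal becomes ruling out any such simultaneous relation. This is a genuine number-theoretic independence statement about $2$ and $3$, and I would expect to need real arithmetic input here (order and $\gcd$ comparisons across the $p_{i}^{e_{i}}$, or quadratic-residue constraints) rather than formal group theory; pinning down exactly which hypotheses force every such $\chi$ to be trivial is the part of the proof I anticipate will demand the most care.
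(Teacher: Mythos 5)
Your reduction is sound and, up to the last step, follows the same route as the paper: the paper likewise establishes $C_{b}\le G_{b}$ via the translation $P(x)=x+1$ (citing \cite{Kenny} for a word in $T_{0},T_{1}$, whereas your identity $(T_{0}^{-1}T_{1})(T_{1}T_{0}^{-1})^{-1}=(1,2^{-1})$ is a correct and pleasantly short self-contained substitute), identifies $G_{b}=\langle x\mapsto x+1,\,x\mapsto2x,\,x\mapsto3x\rangle$, and then reduces everything to the multiplicative subgroup $\langle2,3\rangle\le(\mathbb{Z}/b\mathbb{Z})^{*}$ viewed through the Chinese Remainder Theorem. The divergence is exactly at the point you flag as the crux. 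The paper disposes of it in one sentence --- ``the action on the whole group will be the direct product of each of these'' --- i.e.\ it silently assumes that the subgroup generated by the diagonal elements $(2,\ldots,2)$ and $(3,\ldots,3)$ inside $\prod_{i}C_{a_{i}}$ is the full direct product. You correctly refuse to grant this and isolate the obstruction as a nontrivial character $\chi=(\chi_{i})$ of $\prod_{i}C_{a_{i}}$ with $\prod_{i}\chi_{i}(2)=\prod_{i}\chi_{i}(3)=1$.

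That obstruction actually occurs, so the step you were bracing to fight for cannot be won: the theorem as stated is false, and the gap in your proposal coincides with a genuine error in the paper's proof rather than with something you failed to see. Take $b=5\cdot19=95$. Both $2$ and $3$ are quadratic nonresidues modulo $5$ and modulo $19$ (for $19$: $19\equiv3\pmod 8$ gives $\left(\tfrac{2}{19}\right)=-1$, and reciprocity gives $\left(\tfrac{3}{19}\right)=-1$), so the Jacobi-symbol character $\chi(x)=\left(\tfrac{x}{5}\right)\left(\tfrac{x}{19}\right)$ is nontrivial on $(\mathbb{Z}/95\mathbb{Z})^{*}\cong C_{4}\times C_{18}$ yet satisfies $\chi(2)=\chi(3)=1$. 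Hence $\langle2,3\rangle\le\ker\chi$ has order at most $36$, whereas $a_{1}=\lcm(4,4)=4$ and $a_{2}=\lcm(18,18)=18$ predict $|M|=72$; since every element of $G_{95}$ has linear part in $\langle2,3\rangle$, we get $|G_{95}|\le95\cdot36<95\cdot72=|C_{95}\rtimes(C_{4}\times C_{18})|$. (The modulus $b=5\cdot29=145$ behaves the same way.) The statement that survives is the one your argument actually proves: $G_{b}\cong C_{b}\rtimes\langle2,3\rangle$, where $\langle2,3\rangle$ is in general only a subdirect product of the $C_{a_{i}}$, with equality exactly when no nontrivial character relation of the kind you describe holds.
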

In the statement above, the action on $C_{b}$ defining the semidirect
product is the action of $\AGL(1,b)$ by conjugation on the subgroup
\[
\{x\mapsto x+a\mid a\in\mathbb{Z}/b\mathbb{Z}\}\cong C_{b}.
\]

\begin{proof}
In \cite{Kenny}, it was shown that the function $P\left(x\right)=x+1$
is an element of the group generated by $T_{0}$ and $T_{1}$. This
function clearly has order $b$ and generates the cyclic subgroup
$C_{b}=\{x\mapsto x+d:d\in\mathbb{Z}/b\mathbb{Z}\}$. This is a normal
subgroup, since it is a fixed set under conjugation.

Since $x\mapsto x+1$ is in $G_{b}$, and $T_{0}$ and $T_{1}$ can
be expressed in terms of the maps $x\mapsto x+1$, $x\mapsto2x$,
and $x\mapsto3x$, we have that $G_{b}\subseteq\left<x\mapsto x+1,x\mapsto2x,x\mapsto3x\right>$
as a subgroup of $\AGL(1,b)$. Moreover, $2x$ and $3x$ can be generated
using $T_{0}$, $T_{1}$, and $x+1$, so in fact 
\[
G_{b}=\left<x\mapsto x+1,x\mapsto2x,x\mapsto3x\right>.
\]
The first generator corresponds to the cyclic subgroup $C_{b}$. We
now only need to see what we obtain from multiplication by $2$ and
$3$.

By the Chinese Remainder Theorem, we have $\mathbb{Z}/b\mathbb{Z}\cong C_{{p_{1}}^{e_{1}}}\times C_{{p_{2}}^{e_{2}}}\times\cdots\times C_{{p_{n}}^{e_{n}}}$.
Thus we can look at the action of multiplication by $2$ and $3$
on each component, and the action on the whole group $G_{b}$ will
be the direct product of each of these.

Let $p\in\{p_{1},p_{2},\ldots,p_{n}\}$ and $e$ be the corresponding
exponent. Since $b$ is relatively prime to $2$, we know that $p$
is an odd prime. Thus $\left(Z/p^{e}Z\right)^{\ast}$, the group of
units of $Z/p^{e}Z$, is cyclic. Let $s$ be the order of $2$ and
$t$ the order of $3$ in $\left(Z/p^{e}Z\right)^{\ast}$. Since the
subgroup lattice of $\left(Z/p^{e}Z\right)^{\ast}$ is isomorphic
to the divisor lattice of $\phi(p^{e})$, we have that $\|\left<2,3\right>\|=\lcm(s,t)$,
which concludes the proof.
\end{proof}
While Theorem \ref{structure} describes the overall structure of
the groups, it would be useful to understand it as a finitely generated
group in terms of the generators $T_{0}$ and $T_{1}$ (mod $b$).
We begin by calculating the orders of $T_{0}$ and $T_{1}$ in $G_{b}$.
To do so, we introduce the auxiliary function $E$.
\begin{defn*}
Let $E_{0}(x)=3x/2$ and $E_{1}(x)=(x+1)/2$, and define $E:\mathbb{Z}_{+}\to\mathbb{Z}_{+}$
by 
\[
E(x)=\begin{cases}
E_{0}(x) & x\text{ is even}\\
E_{1}(x) & x\text{ is odd}
\end{cases}.
\]
 
\end{defn*}
Let $P(x)=x+1$. Then straightforward calculation shows that $E=PTP^{-1}$,
and in particular that $E_{0}=PT_{1}P^{-1}$ and $E_{1}=PT_{0}P^{-1}$.
Thus, the $E$-orbit of a positive integer $x>1$ can be obtained
by taking the $T$-orbit of $x-1$ and adding $1$ to each element
of the orbit. Therefore the $3x+1$ conjecture is equivalent to showing
that the $E$-orbit of any positive integer $x>1$ contains the integer
$2$.
\begin{rem}
The conjugacy between $T$ and $E$ via $P$ makes computation of
orbits somewhat easier: to compute the $E$-orbit of a positive integer
$x>1$, one first replaces any $2$'s in the prime factorization of
$x$ with $3$'s, one at a time, until the result is odd. At that
point, one divides by $2$ and rounds up to the nearest integer, and
repeats the process. For instance, the $E$-orbit of $8$ is 
\[
8,12,18,27,14,21,11,6,9,5,3,2,\ldots,
\]
 which corresponds to the $T$-orbit of $7$: 
\[
7,11,17,26,13,20,10,5,8,4,2,1,\ldots.
\]
\end{rem}
\begin{lem}
\label{orders} Let $b$ be a positive integer relatively prime to
$2$ and $3$. The order of $T_{0}$ in $G_{b}$ is equal to the order
of $2$ modulo $b$, and the order of $T_{1}$ in $G_{b}$ is equal
to the order of $\frac{3}{2}$ modulo $b$.\end{lem}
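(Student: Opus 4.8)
The plan is to exploit the fact that both maps, viewed inside $\AGL(1,b)$, are essentially pure multiplications, for which the order is transparent. First I would record the elementary observation that for a unit $u$ of $\mathbb{Z}/b\mathbb{Z}$, the multiplication map $x\mapsto ux$ has order in $\AGL(1,b)$ exactly equal to the multiplicative order of $u$ modulo $b$: its $k$-th iterate is $x\mapsto u^{k}x$, which is the identity permutation precisely when $u^{k}\overmod{b}1$ (necessity follows by evaluating at $x=1$, and sufficiency is immediate). This reduces each claim to identifying the relevant multiplier.

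For $T_{0}$ this finishes the first claim directly. Since $b$ is relatively prime to $2$, the element $2$ is a unit and $T_{0}(x)=x/2$ is literally the multiplication map $x\mapsto 2^{-1}x$. Hence the order of $T_{0}$ equals the multiplicative order of $2^{-1}$ modulo $b$, which coincides with the order of $2$ modulo $b$ because an element and its inverse always have the same order in a group.

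For $T_{1}$ the situation is slightly more delicate, and this is the one step requiring an actual idea: $T_{1}(x)=(3x+1)/2=(3/2)x+1/2$ is a proper affine map with nonzero constant term, so one cannot read off its order as the order of its linear part without justification. Here I would invoke the conjugacy already noted above, namely $E_{0}=PT_{1}P^{-1}$ where $P(x)=x+1$. Since $P$ lies in $G_{b}$, conjugation by $P$ is an inner automorphism of $G_{b}$ and therefore preserves orders, so $T_{1}$ and $E_{0}$ have equal order. But $E_{0}(x)=3x/2=(3\cdot 2^{-1})x$ is exactly the multiplication map by the unit $3/2$, whose order is the multiplicative order of $3/2$ modulo $b$ by the observation of the first paragraph. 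This yields the second claim.

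The only genuine obstacle is the constant term of $T_{1}$, and the conjugation by $P$ absorbs it cleanly. Should one prefer a self-contained argument avoiding the conjugacy, computing the $k$-th iterate gives $T_{1}^{k}(x)=(3/2)^{k}(x+1)-1$, where the telescoping constant simplifies precisely because of the coincidence $(3/2)-1=1/2$ with the constant term of $T_{1}$; this recovers the same conclusion that $T_{1}^{k}$ is the identity exactly when $(3/2)^{k}\overmod{b}1$.
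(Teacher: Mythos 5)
Your proposal is correct and follows essentially the same route as the paper: the order of $T_{0}$ is read off as the multiplicative order of $2^{-1}$ (hence of $2$) modulo $b$, and the order of $T_{1}$ is obtained by conjugating to $E_{0}(x)=\tfrac{3}{2}x$ via $P(x)=x+1$. The explicit iterate $T_{1}^{k}(x)=(3/2)^{k}(x+1)-1$ you add as an alternative is a correct, self-contained bonus that the paper does not spell out.
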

\begin{proof}
The order of $T_{0}$ in $G_{b}$ is equal to the order of $1/2$
modulo $b$, which is equal to the order of $2$ modulo $b$.

For $T_{1}$, we have that $T$ is conjugate to $E$ on the positive
integers via the map $x\mapsto x+1$. Therefore, $T$ and $E$ are
also conjugate when considered as maps on $\mathbb{Z}/b\mathbb{Z}$.
In particular, the conjugacy sends $T_{0}$ to $E_{1}$ and $T_{1}$
to $E_{0}$.

Now, the order of $T_{1}$ in $G_{b}$ is equal to the order of $E_{0}$
in $G_{b}$ by the conjugacy, and the order of $E_{0}$ is equal to
the order of $\frac{3}{2}$ modulo $b$ (since $E_{0}(x)=\frac{3}{2}x$).
This completes the proof. 
\end{proof}
In \cite{Kenny}, it was shown that $G_{b}$ acts transitively on
$\mathbb{Z}/b\mathbb{Z}$, by showing that the map $P\left(x\right)=x+1$
is in $G_{b}$. It is easy to check that the shortest representation
for $P$ map in terms of the generators $T_{0}$ and $T_{1}$ is 
\begin{equation}
T_{0}^{-2}T_{1}T_{0}T_{1}^{-1}T_{0}=P\label{eq:x+1}
\end{equation}
and the corresponding result for the map $E$ is

\[
E_{1}^{-2}E_{0}E_{1}E_{0}^{-1}E_{1}=P.
\]

\section{Uniformity in the distribution of arithmetic sequences}

\label{efficient}

For this section and the next, we require some notation and basic
results involving back tracing. We generally follow the notation used
in Wirsching's book \cite{Wirsching}.

Define the set of \textit{feasible vectors} to be 
\[
\mathcal{F}=\underset{k=0}{\overset{\infty}{\bigcup}}\mathbb{N}^{k+1}.
\]
 Let $s\in\mathcal{F}$. Then $s=\left(s_{0},s_{1},\ldots,s_{k}\right)$
for some nonnegative integers $k$ and $s_{0},s_{1},\ldots,s_{k}$.
The \textit{length} of $s$, written $l\left(s\right)$, is $k$.
The norm of $s$, written $\left\vert \left\vert s\right\vert \right\vert $,
is $l\left(s\right)+\underset{i=0}{\overset{l\left(s\right)}{\sum}}s_{i}$.

For $s\in\mathcal{F\ }$ with $s=\left(s_{0},s_{1},\ldots,s_{k}\right)$,
Wirsching calls the function $v_{s}:\mathbb{Z}^{+}\rightarrow\mathbb{Q}$
given by $v_{s}=T_{0}^{-s_{0}}\circ T_{1}^{-1}\circ T_{0}^{-s_{1}}\circ T_{1}^{-1}\circ\cdots\circ T_{1}^{-1}\circ T_{0}^{-s_{k}}$
a \textit{back tracing function}. If $v_{s}\left(x\right)\in\mathbb{Z}^{+}$
then we say $s$ is an \textit{admissible vector} for $x$, and that
the corresponding back tracing function is a \textit{admissible} for
$x$. Define 
\[
\mathcal{E}\left(x\right)=\left\{ s\in\mathcal{F}:s\ \text{is admissible for }x\right\} .
\]

Wirsching also shows that if $l(s)=m>0$, then there is a unique congruence
class $a$ mod $3^{m}$ with $a$ relatively prime to $3$ such that,
if $x$ is any positive integer, $s$ is an admissible vector for
$x$ if and only if $x\overmod{3^{m}}a$. 

Naturally it would be useful to strengthen the existence theorems
in \cite{Kenny} to determine how a given arithmetic sequence is distributed
in the $3x+1$ graph. More precisely, we wish to determine bounds
for how far away the nearest element in a given arithmetic sequence
$a+d\bbN$ is to a given positive integer $x$. We do so by first
making precise the general bounds that follow from the proof of \cite{Kenny},
Lemma 3.8, and strengthen those bounds for the special case where
$d$ is relatively prime to $2$ and $3$. In every case we show that
the bounds obtained are independent of the choice of $x$, proving
that arithmetic sequences are in this sense uniformly distributed
in the $3x+1$ graph.

\subsection{Back tracing modulo an arbitrary modulus $d$}

We begin with the following bound for the length of a back tracing
sequence to any modulus $d$.
\begin{thm}
Let $d>1$ be a positive integer, and write $d=2^{n}3^{m}b$ where
$b$ is relatively prime to $2$ and $3$. Let $a\in\bbN$ with $a<d$,
and let $f$ be the order of $3/2$ modulo $b$. Then any $x\in\bbN-3\bbN$
back traces to an element of $a+d\bbN$ via an admissible sequence
of length at most $2(b-1)f+n+1$. \end{thm}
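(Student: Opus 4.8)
The plan is to build the admissible back tracing sequence one arrow at a time, exploiting the asymmetry in the cost function: the length $l(s)$ counts only the red arrows $T_1^{-1}$, whereas the black arrows $T_0^{-1}:x\mapsto 2x$ are free and always admissible. So I treat black arrows as an unlimited resource for steering residues and spend red arrows sparingly. I will use two basic facts repeatedly: a red arrow $T_1^{-1}(y)=(2y-1)/3$ is admissible exactly when $y\overmod{3}2$, and its output is always odd; and a black arrow toggles the current residue between $1$ and $2$ modulo $3$, so the mod-$3$ admissibility gate in front of each red arrow can always be met with at most one extra (free) black arrow. By the Chinese Remainder Theorem the target $y\overmod{d}a$ splits into $y\overmod{2^n}a$, $y\overmod{3^m}a$, and $y\overmod{b}a$, which I will arrange with budgets $n$, $1$, and $2(b-1)f$ red arrows respectively; the difficulty, addressed below, is that every arrow acts on all three moduli simultaneously.

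The residue modulo $b$ is where the bulk of the budget goes, and here I use the group $G_b$. By Lemma \ref{orders}, $T_1$ has order $f$ in $G_b$, so $T_1^{f}$ is the identity modulo $b$ and any positive power of $T_1$ can be traded for negative powers modulo $b$. Applying this to the known relation $T_0^{-2}T_1T_0T_1^{-1}T_0=P$ for the translation $P(x)=x+1$ produces an all-inverse word realizing $P$ modulo $b$ using $f$ red arrows, and since $G_b$ acts transitively on $\bbZ/b\bbZ$ through the powers of $P$, iterating at most $b-1$ times moves any starting residue to the desired one modulo $b$. I allow a factor of two here, at most $2f$ red arrows per unit translation, to absorb the cost of keeping the integer sequence genuinely admissible: repairing the mod-$3$ gate before each red arrow and neutralizing the stray powers of $2$ modulo $b$ by padding with free black arrows in multiples of the order of $2$ modulo $b$. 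This yields the stated $2(b-1)f$.

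To control the residue modulo $2^n$ I use that both $T_0^{-1}$ and $T_1^{-1}$ act modulo $2^n$ as affine maps with even multiplier (multiplication by $2$ and by $2\cdot 3^{-1}$), so after any $n$ consecutive arrows the product of multipliers is $\overmod{2^n}0$ and the value modulo $2^n$ depends only on these trailing arrows, not on anything earlier. This is the key to decoupling: I put the mod-$2^n$ work in a trailing suffix, and since a red arrow produces an odd number while a trailing block of free black arrows fixes the $2$-adic valuation, an induction on the binary digits shows $n$ red arrows in the suffix suffice to hit any target modulo $2^n$. Because this suffix has a definite, computable effect $g\in G_b$ on residues modulo $b$, I aim the mod-$b$ construction above at $g^{-1}(a)$ rather than at $a$, so that the suffix lands precisely on $a$ modulo $b$. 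Finally, the residue modulo $3^m$ is essentially free: $2$ is a primitive root modulo $3^m$, so the abundant free black arrows already rotate to any class coprime to $3$, and a single additional red arrow (the ``$+1$'') lets one reach the classes divisible by $3$ as well, as a direct computation with $T_1^{-1}$ confirms.

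The main obstacle is exactly the simultaneous control just flagged: no single arrow changes only one of the three moduli, so the phases interfere, and the honest argument must satisfy the three congruences together rather than strictly in sequence. Concretely, I would isolate ``the value modulo $2^n$ depends only on the last $n$ arrows'' as a lemma, use it to make the mod-$b$ phase insensitive to the suffix, and then verify that the free black-arrow multiplicities $s_i$ can always be chosen to meet the mod-$3^m$ target and every mod-$3$ admissibility gate without consuming extra red arrows, using crucially that the $s_i$ do not contribute to $l(s)$. Tracking the constant $2(b-1)f$ faithfully through this process, that is, showing admissibility never forces more than a doubling of the naive $(b-1)f$ count, is the most delicate part and is where I expect the real work of the proof to lie.
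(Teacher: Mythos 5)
Your proposal assembles essentially the same ingredients as the paper --- the CRT split of $d$, the translation gadget built from $T_{0}^{-2}T_{1}T_{0}T_{1}^{-1}T_{0}=P$ with positive powers traded for negative ones at cost $f$ red arrows per unit translation, the observation that the residue mod $2^{n}$ is determined by the last $n$ arrows applied, and the use of $2$ as a primitive root mod powers of $3$ to buy admissibility and the mod-$3^{m}$ target with free black arrows --- but the way you account for the constant is not the paper's, and the step you defer as ``the real work'' is exactly where your accounting breaks. First, the factor of $2$ in $2(b-1)f$ does not come from admissibility slack. Repairing the mod-$3$ gates and neutralizing stray powers of $2$ mod $b$ are done with black arrows only, which contribute nothing to $l(s)$; in the paper's Lemma \ref{boundmodb} each unit translation costs exactly $f$ red arrows and all admissibility is absorbed by a single trailing block $T_{0}^{-k}$, chosen once for the over-provisioned word $(0,0)\cdot s_{1}^{\,b}$ so that every initial segment in application order remains admissible. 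If your single-pass scheme worked as described, you would get $(b-1)f+n+1$, a strictly better bound than the theorem claims --- a sign that something is being undercounted.

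What is being undercounted is a circular dependency that your ``aim at $g^{-1}(a)$'' correction does not close. The trailing power of $2$ must be chosen to satisfy the admissibility class mod $3^{l(s)}$ and the mod-$3^{m}$ target; that choice multiplies the starting point by $2^{k}$ and hence shifts the residue mod $b$; the size of the mod-$b$ correction then determines the number of red arrows, hence $l(s)$, hence the modulus $3^{l(s)}$ at which $k$ had to be computed in the first place. Moreover your forced mod-$2^{n}$ block sits at the \emph{end} of the application order, so the over-provisioning trick (make the longest word admissible, then use an initial segment) no longer applies: shortening the middle changes the admissibility class and the mod-$3^{m}$ effect of the whole word, including the forced block. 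The paper breaks this loop with an idea absent from your outline: it first spends $(b-1)f$ red arrows reaching an integer $\overmod{b}0$ and not divisible by $3$, applies $T_{0}^{-n}$ to park at $0$ mod $2^{n}b$, and only then runs a second pass of cost $(b-1)f+n+1$ toward $a$ mod $2^{n}b$ and mod $3^{m}$. Because the intermediate value is $0$ mod $2^{n}b$, every subsequent trailing power of $2$ fixes its class mod $2^{n}b$, so the admissibility and mod-$3^{m}$ adjustments are completely decoupled from the mod-$2^{n}b$ targeting. That second pass is where the second $(b-1)f$ comes from. To salvage your route you would need to exhibit an explicit resolution of the circularity (for instance, padding some interior $s_{i}$ by a multiple of $\lcm$ of the order of $2$ mod $b$ and mod $3^{L}$ for a fixed large $L$) and verify it never costs a red arrow; as written, the claimed budget is asserted rather than proved.
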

\begin{rem}
This bound depends only on the modulus $d$ and not on the starting
position $x$. This shows that the arithmetic sequence $a+d\bbN$
is, in some sense, ``evenly distributed'' throughout the $3x+1$
graph.
\end{rem}
In order to prove this we first prove the case where $n=m=0$, obtaining
a stronger bound in this situation. The construction follows that
of \cite{Kenny}, Lemma 2.8. We sketch the proofs here and refer the
reader to \cite{Kenny} for details.
\begin{lem}
\label{boundmodb} Let $b$ be a positive integer relatively prime
to $2$ and $3$, and let $a\in\{0,1,\ldots,b-1\}$ be any residue
modulo $b$. Let $f$ be the order of $3/2$ modulo $b$. Then for
any positive integer $x$ relatively prime to $3$, there exists a
admissible vector $s\in\mathcal{E}\left(x\right)$ for which $v_{s}(x)\overmod{b}a$
and $v_{s}(x)\underset{3}{\not\equiv}0$, such that 
\begin{equation}
l(s)\le(b-1)f.\label{1}
\end{equation}
 \end{lem}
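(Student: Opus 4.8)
The plan is to pass to the action on $\bbZ/b\bbZ$, where back tracing becomes an affine process, and to reach the target residue $a$ by iterating a single ``residue-incrementing'' move whose cost in red arrows is controlled by $f$. Modulo $b$ the inverse maps act as the affine permutations $T_0^{-1}\colon x\mapsto 2x$ and $T_1^{-1}\colon x\mapsto (2x-1)/3$, both well defined since $\gcd(b,6)=1$. Writing an element of $\AGL(1,b)$ as $x\mapsto cx+d$, a back tracing word $v_s$ with $k=l(s)$ red arrows and $p=\sum_i s_i$ black arrows has linear part $c=2^{p}\,(2/3)^{k}$: each black arrow contributes a factor $2$ and each red arrow a factor $2/3$. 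The key point is that $l(s)$ counts only the red arrows, so black arrows are ``free'' and may be inserted to adjust the map without affecting the bound.

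The main building block is a back tracing word that realizes a translation modulo $b$ using exactly $f$ red arrows. By Lemma~\ref{orders}, $T_1$ has order $f$ in $G_b$, so $3/2$ --- and hence $2/3$ --- has order $f$ modulo $b$; taking $k=f$ red arrows therefore collapses the linear part to $c=2^{p}$. Choosing the black-arrow counts $s_0,\dots,s_f$ so that $p=\sum_i s_i\overmod{\sigma}0$, where $\sigma$ is the order of $2$ modulo $b$, forces $c\overmod{b}1$, so the resulting word acts as a pure translation $x\mapsto x+d$ on $\bbZ/b\bbZ$. The remaining freedom in the $s_i$ lets us tune the shift $d$ to a unit, indeed to $1$, so that composing this move with itself at most $b-1$ times carries the current residue to $a$. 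Since each move uses $f$ red arrows and at most $b-1$ moves are needed, the total number of red arrows is at most $(b-1)f$, giving the bound~(\ref{1}).

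It remains to guarantee that the word we build is genuinely admissible for the given $x$ --- that every intermediate value is a positive integer --- and that the final value is $\not\overmod{3}0$. Integrality fails only at a red arrow $T_1^{-1}(y)=(2y-1)/3$, which requires $y\overmod{3}2$. We secure this at each red arrow by prepending black arrows: multiplication by $2$ toggles the residue modulo $3$ between $1$ and $2$, so from any value coprime to $3$ we can reach $2\bmod 3$ using at most one extra black arrow. Because these corrective black arrows also alter the residue modulo $b$, we absorb them using the freedom to insert multiples of $\sigma$ black arrows, which act trivially modulo $b$ since $2^{\sigma}\overmod{b}1$, running the Chinese Remainder Theorem between the coprime moduli $3$ and $b$ to meet both conditions at once. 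Finally, Wirsching's characterization --- admissibility of $s$ is equivalent to a single congruence class modulo $3^{l(s)}$ --- certifies that the constructed back tracing function takes a positive-integer value on the class of $x$, and a suitable choice keeps that value off $0\bmod 3$.

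I expect the main obstacle to be precisely this last coordination: the corrective black arrows must do double duty, fixing the residue modulo $3$ before every red arrow while restoring the residue modulo $b$, and one must verify that the exact red-arrow count $f$ per increment, the exact translation amount, and the mod-$3$ integrality conditions can all be met simultaneously. The coprimality $\gcd(6,b)=1$ is what makes this possible via CRT, but checking that the tuning of the $s_i$ stays consistent with admissibility at every stage, and that normalizing the shift to a unit does not inflate the number of increments past $b-1$, is the delicate bookkeeping at the heart of the argument; it mirrors the construction of \cite{Kenny}, Lemma~2.8.
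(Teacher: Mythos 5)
Your overall strategy --- a pure back tracing word with $f$ red arrows acting as a translation modulo $b$, iterated at most $b-1$ times, with admissibility arranged by appending powers of $2$ --- is the same as the paper's, but the two steps you treat as routine are exactly where the content lies, and one of them fails as stated. First, the existence of a back tracing word with exactly $f$ red arrows whose induced affine map mod $b$ is translation by $1$ (or even by a unit) is asserted via ``the remaining freedom in the $s_i$,'' but once you force $\sum_i s_i\overmod{\sigma}0$ the constant term is $d=-\frac{1}{3}\sum_{j=1}^{f}2^{s_0+\cdots+s_{j-1}}\left(2/3\right)^{j-1}$, a sum of units that could a priori vanish or be a zero divisor modulo $b$; nothing in your argument rules this out. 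The paper does not need this freedom: it starts from the exact integer identity $T_{0}^{-2}\circ T_{1}\circ T_{0}\circ T_{1}^{-1}\circ T_{0}=P$ of equation (\ref{eq:x+1}) and converts it into a back-tracing-only word by substituting $T_{0}=T_{0}^{1-e}$ and $T_{1}=T_{1}^{1-f}$ in $G_{b}$, yielding the explicit vector $s_{1}=(2,0,\ldots,0,e-1,e-1)$ of length $f$ with $v_{s_{1}}(n)\overmod{b}n+1$.

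Second, your admissibility mechanism does not work. You propose to make the value $2\bmod 3$ before each red arrow by inserting blocks of black arrows whose number is a multiple of $\sigma$ (the order of $2$ mod $b$), so as to act trivially mod $b$. But $2\overmod{3}-1$, so $2^{j}\overmod{3}(-1)^{j}$: whenever $\sigma$ is even (e.g.\ $b=5$, $\sigma=4$), every block with $j\overmod{\sigma}0$ is also trivial mod $3$ and cannot toggle the residue from $1$ to $2$. The two conditions live on the exponent $j$ modulo $2$ and modulo $\sigma$, which need not be coprime, so the Chinese Remainder Theorem ``between $3$ and $b$'' is not the relevant statement and does not rescue this. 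The paper sidesteps the issue with a single global adjustment rather than per-arrow fixes: setting $s_{2}=(0,0)\cdot s_{1}^{\,b}$, Wirsching's criterion plus the fact that $2$ is a primitive root modulo every power of $3$ give one exponent $k$ with $s_{2}\in\mathcal{E}(2^{k}x)$; then every truncation $s_{1}^{\,j}\cdot(k)$ with $j\le b$ is admissible for $x$, the resulting change of starting residue mod $b$ from $x$ to $c=2^{k}x$ is absorbed by taking $j=(a-c)\bmod b$, and the fact that $v_{s}(x)$ extends to a longer admissible vector containing a further red arrow is what forces $v_{s}(x)\underset{3}{\not\equiv}0$ --- a conclusion you also leave to ``a suitable choice.''
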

\begin{proof}
From equation (\ref{eq:x+1}) we have that $T_{0}^{-2}\circ T_{1}\circ T_{0}\circ T_{1}^{-1}\circ T_{0}(n)=n+1$
for any $n$, and so trivially we have that 
\[
T_{0}^{-2}\circ T_{1}\circ T_{0}\circ T_{1}^{-1}\circ T_{0}(n)\overmod{b}n+1.
\]

Let $f$ be the order of $\frac{3}{2}$ modulo $b$, and let $e$
be the order of $2$ modulo $b$. Notice that since $3$ is not congruent
to $2$ mod $b$, $f$ is at least $2$, and similarly $e$ is at
least $2$. We also have $T_{0}=T_{0}^{1-e}$ and $T_{1}=T_{1}^{1-f}$
in $G_{b}$. Substituting, we obtain 
\[
T_{0}^{-2}\circ T_{1}\circ T_{0}\circ T_{1}^{-1}\circ T_{0}=T_{0}^{-2}\circ T_{1}^{1-f}\circ T_{0}^{1-e}\circ T_{1}^{-1}\circ T_{0}^{1-e}.
\]

Let $s_{1}=(2,\underbrace{0,0,\ldots,0}_{f-2},e-1,e-1)$, so that
\[
v_{s_{1}}(n)=T_{0}^{-2}\circ T_{1}^{1-f}\circ T_{0}^{1-e}\circ T_{1}^{-1}\circ T_{0}^{1-e}(n)\overmod{b}n+1.
\]
 Notice that $l(s_{1})=f$.

Now, let $x$ be a positive integer relatively prime to $3$, and
define $s_{2}=(0,0)\cdot\underbrace{s_{1}\cdot s_{1}\cdot\cdots\cdot s_{1}}_{b}$.
Since $2$ is a primitive root mod every power of $3$ (see, for instance,
\cite{Hua}), there is a positive integer $k$ for which $s_{2}\in\mathcal{E}(2^{k}x)$.
Hence $s_{2}\cdot(k)\in\mathcal{E}(x)$. It follows that any vector
of the form $s_{1}\cdot s_{1}\cdot\cdots s_{1}\cdot(k)$, where the
number of copies of $s_{1}$ is at most $b$, is in $\mathcal{E}(x)$
as well.

Let $c=T_{0}^{-k}(x)\bmod b$, and let $s=\underbrace{s_{1}\cdot s_{1}\cdot\cdots\cdot s_{1}}_{(a-c)\bmod b}\cdot(k)$.
Then we have 
\begin{eqnarray*}
v_{s}(x) & \overmod{b} & c+(a-c)\\
 & \overmod{b} & a
\end{eqnarray*}
 and 
\begin{eqnarray*}
l(s) & = & ((a-c)\bmod b)\cdot f\\
 & \le & (b-1)f.
\end{eqnarray*}

Finally, to see that $v_{s}(x)\underset{3}{\not\equiv}0$, let $t=(a-c)\bmod b$
and let 
\[
s_{3}=(0,0)\cdot\underbrace{s_{1}\cdot s_{1}\cdot\cdots\cdot s_{1}}_{b-t}.
\]
 Then $s_{3}\cdot s=s_{2}\cdot(k)$, which is an admissible sequence
for $x$, and so $s_{3}$ is admissible for $v_{s}(x)$. Since $s_{3}$
has length at least $1$, we have that $v_{s}(x)$ is not divisible
by $3$, as desired. 
\end{proof}
With this Lemma in hand, we can now prove Theorem \ref{boundmodb}.
\begin{proof}
First, by Lemma \ref{boundmodb}, we can back trace from $x$ to some
integer $y\underset{3}{\not\equiv}0$ that is congruent to $0$ modulo
$b$ via an admissible sequence of length at most $(b-1)f$. We can
then apply $T_{0}^{-n}$ to $y$ to obtain an integer $z\in\bbN-3\bbN$
that is congruent to $0$ modulo $2^{n}b$.

We wish to back trace from $z$ to an integer $w$ with $w\overmod{2^{n}b}a$
and $w\overmod{3^{m}}a$. Following the arguments in \cite{Kenny},
we can find a sequence $s\in\mathcal{E}\left(z\right)$ of length
at most $(b-1)f+n+1$ for which $v_{s}(z)\overmod{2^{n}b}a$. Since
$2$ is a primitive root mod $3^{l(s)+m}$, there is a power of $2$,
say $2^{k}$, such that $2^{k}z$ also has $s$ as an admissible vector
and moreover $v_{s}(2^{k}z)\overmod{3^{m}}a$ (c.f. \cite{Kenny},
Lemma 3.7). Thus, replacing $s$ by $s\cdot(k)$, we can set $w=v_{s}(z)$,
and we are done.

The total length of the back tracing sequence from $x$ to $w$ is
then at most $(b-1)f+(b-1)f+n+1=2(b-1)f+n+1$, as desired.
\end{proof}

\subsection{Back tracing when $2$ is a primitive root of the modulus}

We can improve this bound in some special cases, particularly when
$2$ is a primitive root modulo $b$. Since the only integers that
can have a primitive root are $2$, $4$, $p^{r}$, and $2p^{r}$
where $p$ is an odd prime, this implies that $b$ must be a power
of an odd prime.
\begin{thm}
\label{backmodb} Let $r$ be a positive integer and $p$ be an odd
prime greater than $3$ such that $2$ is a primitive root modulo
$p^{r}$. Let $a$ be any residue modulo $p^{r}$ relatively prime
to $p$. Then for any positive integer $x$ relatively prime to $3$,
there exists a admissible vector $s\in\mathcal{E}\left(x\right)$
for which $v_{s}(x)\overmod{p^{r}}a$ and $v_{s}(x)\underset{3}{\not\equiv}0$,
such that 
\[
l(s)\le1.
\]
\end{thm}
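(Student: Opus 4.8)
The plan is to split on whether $x$ is a unit modulo $p^{r}$, exploiting the hypothesis that $2$ is a primitive root: by Lemma \ref{orders} the order of $T_{0}$ in $G_{p^{r}}$ equals the order of $2$ modulo $p^{r}$, which is $\phi(p^{r})$, so applying $T_{0}^{-s_{0}}$ (which acts as $y\mapsto 2^{s_{0}}y$) carries any unit through \emph{every} residue coprime to $p$ as $s_{0}$ ranges over $\bbN$. The key reduction is therefore the following: if I can back trace from $x$ to some positive integer $y$ coprime to both $p$ and $3$ using at most one red arrow, then post-composing with a single black chain $T_{0}^{-s_{0}}$ — which adds no red arrows and preserves both $3\nmid y$ and $\gcd(y,p)=1$ — lets me pick $s_{0}$ with $2^{s_{0}}\overmod{p^{r}}a\,y^{-1}$, so the resulting value is $\overmod{p^{r}}a$ and $\underset{3}{\not\equiv}0$. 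Thus the real work is only to reach such a $y$.

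If $\gcd(x,p)=1$, I simply take $y=x$, which is already coprime to $p$ and, by hypothesis, to $3$. The sequence $s=(s_{0})$ supplied by the reduction has $l(s)=0$, finishing this case.

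If instead $p\mid x$, a length-$0$ back trace reaches only multiples of $p$, so I must spend exactly one red arrow. I set $z=T_{1}^{-1}(2^{s_{1}}x)=(2^{s_{1}+1}x-1)/3$ and choose $s_{1}\in\{0,1\}$ so that $2^{s_{1}+1}x\overmod{3}1$, which makes $z$ a positive integer. Since $p\mid x$ gives $z\overmod{p}-3^{-1}$, a unit, $z$ is coprime to $p$, and the reduction applies with $y=z$, producing $s=(s_{0},s_{1})$ of length $1$.

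The delicate point — and the step I expect to be the main obstacle — is guaranteeing $z\underset{3}{\not\equiv}0$, since otherwise $v_{s}(x)=2^{s_{0}}z$ would be divisible by $3$ for every $s_{0}$. Here I would observe that $3\mid z$ happens exactly when $2^{s_{1}+1}x\overmod{9}1$, and that I retain freedom in $s_{1}$: within the parity class fixed above, $2^{s_{1}+1}\bmod 9$ runs over a full coset of the order-$3$ subgroup $\{1,4,7\}\le(\bbZ/9\bbZ)^{*}$, so $2^{s_{1}+1}x\bmod 9$ takes all three residues that are $\equiv 1\pmod 3$. At most one of these equals $1$, so I can choose $s_{1}$ of the correct parity (still small) with $2^{s_{1}+1}x\underset{9}{\not\equiv}1$, forcing $3\nmid z$. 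This choice does not disturb the earlier steps — $z$ stays a unit modulo $p^{r}$ and the final sweep by $T_{0}^{-s_{0}}$ still reaches $a$ — and integrality of every intermediate value, hence admissibility of $s$, is immediate from the construction.
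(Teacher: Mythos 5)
Your proposal is correct and follows essentially the same route as the paper: split on whether $p\mid x$, handle the unit case with a pure power of $2$ (using that $2$ is a primitive root mod $p^{r}$), and otherwise spend one red arrow after arranging the exponent mod $9$ so that $(2^{s_{1}+1}x-1)/3$ is an integer coprime to both $3$ and $p$, then finish with a black chain. The only cosmetic difference is that the paper hits the residue $4$ mod $9$ directly (since $2$ is a primitive root mod $9$), whereas you avoid the residue $1$ within the coset $\{1,4,7\}$ — the same calculation.
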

\begin{proof}
If $x$ is relatively prime to $p$ then since $2$ is a primitive
root, there exists $k$ such that $2^{k}x\xcong{p^{r}}a$. Clearly
$2^{k}x\not\xcong30$ since $x\not\xcong30$. Thus taking $s=\left(k\right)$
gives the desired result.

If $x$ is not relatively prime to $p$ then since $2$ is a primitive
root mod $9$ we can choose $k\geq0$ such that $2^{k+1}x\xcong94$.
So $T_{1}^{-1}\circ T_{0}^{-k}\left(x\right)=\frac{2^{k+1}x-1}{3}$
is an integer that is relatively prime to both $3$ and $p$. Thus
there exists $j$ such that $T_{0}^{-j}\circ T_{1}^{-1}\circ T_{0}^{-k}\left(x\right)=2^{j}\left(\frac{2^{k+1}x-1}{3}\right)\xcong{p^{r}}a$.
Thus taking $s=\left(j,k\right)$ gives the desired result.
\end{proof}
Theorems \ref{boundmodb} and \ref{backmodb} allow us to back trace
to an integer in a desired congruence class mod $b$ that is also
not divisible by $3$, so that we can continue back tracing to obtain
more elements of the same congruence class. Thus, there is an infinite
back tracing sequence $x_{1},x_{2},\ldots$ of elements in $\mathcal{G}$,
satisfying $x_{i}=T(x_{i+1})$ for all $i$, that contains infinitely
many elements congruent to $a$ mod $b$. In section \ref{inverselimits},
we study infinite back tracing sequences in further depth.

\section{Infinite back tracing and inverse limits}

\label{inverselimits}

We first define infinite back tracing sequences in terms of inverse
limits of \textit{level sets} in $\CollatzGraph$, defined as follows.
\begin{defn*}
Let $x$ be a positive integer and let $k$ be a nonnegative integer.
The \textit{$k$th level set of $x$}, which we denote $\mathcal{L}_{k}(x)$,
is the set of all positive integers $y$ for which $T^{k}(y)=x$. \end{defn*}
\begin{rem*}
This is a generalization of the notion of level set defined in \cite{ApplegateLagarias},
which referred only to the level sets of $1$. \end{rem*}
\begin{defn*}
Let $x$ be a positive integer. Consider the directed system $\{\mathcal{L}_{k}(x)\}_{k\ge0}$
where the map from $\mathcal{L}_{k+1}(x)$ to $\mathcal{L}_{k}(x)$
is given by $T$. We define 
\[
\mathcal{I}_{x}=\lim_{\longleftarrow}\mathcal{L}_{k}(x).
\]
We also use the phrase \textit{infinite back tracing sequence from
$x$} to refer to an element of $\mathcal{I}_{x}$, or simply \textit{infinite
back tracing sequence} when $x$ is understood. 
\end{defn*}
Some of the elements of the sets $\mathcal{I}_{x}$ are rather simple
to describe. For instance, recall that when $x$ is divisible by $3$,
one can only ever apply $T_{0}^{-1}$, as the result will never be
congruent to $2$ mod $3$. Thus the only infinite back tracing sequence
from $x=3y$ is $x,2x,4x,8x,\ldots$. For this reason, we primarily
are concerned with the elements of the $3x+1$ graph which are not
divisible by $3$, and we define a modified version of the inverse
limits for the pruned $3x+1$ graph $\widetilde{\mathcal{G}}$, shown
in Figure \ref{Fig:GPruned}.

\begin{figure}[h]
\begin{centering}
\includegraphics{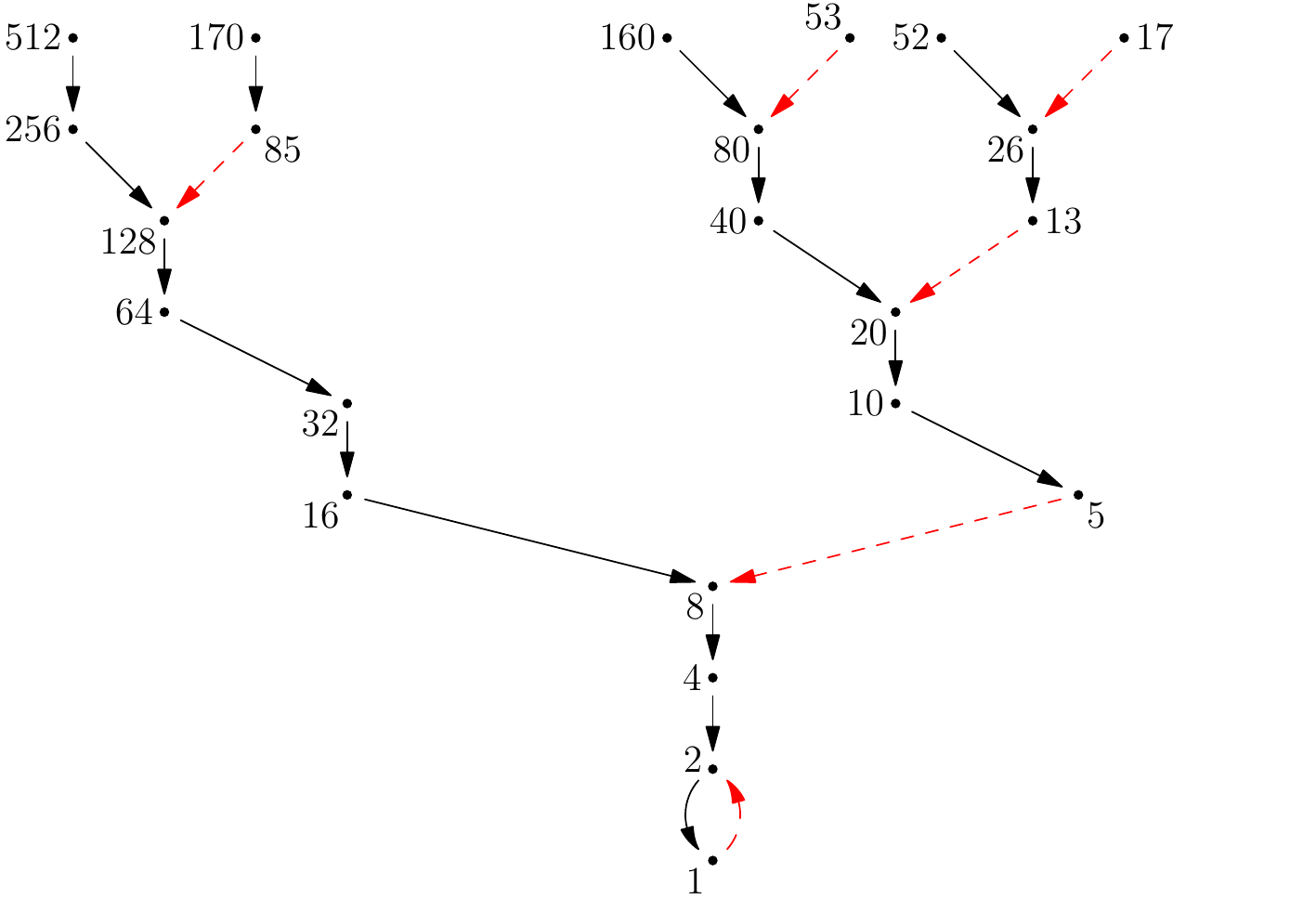} 
\par\end{centering}

\caption{\label{Fig:GPruned} A portion of the pruned $3x+1$ graph $\widetilde{\mathcal{G}}$
near $1$.}
\end{figure}

\begin{defn*}
Let $\widetilde{\mathcal{G}}$ denote the restriction of the $3x+1$
graph to the positive integers relatively prime to $3$, and let $x$
be one such positive integer. Let $\widetilde{\mathcal{L}}_{k}(x)$
be the set of all positive integers $y$ in $\widetilde{\mathcal{G}}$
for which $T^{k}(y)=x$. Then we define 
\[
\widetilde{\mathcal{I}}_{x}=\lim_{\longleftarrow}\widetilde{\mathcal{L}}_{k}(x).
\]

\end{defn*}
Notice that $\widetilde{\mathcal{I}}_{x}$ is strictly contained in
$\mathcal{I}_{x}$ for every $x$.

\subsection{Structure of the inverse limits}

We now investigate the structure of the inverse limit sets $\mathcal{I}_{x}$.
To start, just as the (forward) parity vector of an integer determines
its congruence class mod every power of $2$, we can show that the
parity of the values of an infinite back tracing sequence from $x$
having infinitely many $1$'s determines the congruence class of $x$
mod every power of $3$, and hence determines the integer uniquely.
\begin{defn*}
Let $x$ be a positive integer. A \textit{back tracing parity vector}
(from $x$) is an infinite sequence of $0$'s and $1$'s that is congruent
mod $2$ to some infinite back tracing sequence from $x$.
\end{defn*}
Since we can expand any $\left(s_{0},s_{1},\ldots,s_{n}\right)\in\mathcal{F}$
to the parity vector 
\[
(\underset{s_{0}}{\underbrace{0,\ldots,0}},1,\underset{s_{1}}{\underbrace{0,\ldots,0}},1,\ldots,1,\underset{s_{n}}{\underbrace{0,\ldots,0}},1)
\]
and vice versa, we can say that a finite back tracing parity vector
is admissible for $x$ if and only if the corresponding element of
$\mathcal{F}$ is. An infinite back tracing parity vector is admissible
for $x$ if and only if every initial finite subsequence is.

Notice that for any $3$-adic integer $x$, we can define $T_{0}^{-1}$$\left(x\right)=2x$
and for $x\xcong32$ we can also define $T_{1}^{-1}\left(x\right)=\frac{2x-1}{3}$.
Hence the notion of a back tracing parity vector can be naturally
extended to the $3$-adic integers. Furthermore, for any positive
integer $k$ a $3$-adic integer $\alpha$ is congruent to a unique
ordinary integer $a$ modulo $3^{k}$, and thus a given back tracing
vector is admissible for $\alpha$ if and only if it is admissible
for $a$. 
\begin{thm}
\label{unique} Let $x$ be a $3$-adic integer, and suppose $v$
is a back tracing parity vector for $x$ containing infinitely many
$1$'s. If $v$ is also a back tracing parity vector for the 3-adic
integer $y$, then $x=y$. \end{thm}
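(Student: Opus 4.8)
The plan is to use the hypothesis that $v$ has infinitely many $1$'s to pin down $x$ modulo every power of $3$, and then to appeal to the fact that a $3$-adic integer is determined by its residues modulo all powers of $3$. Running the identical argument for $y$ will force $x\overmod{3^{k}}y$ for every $k$, and hence $x=y$ among the $3$-adic integers.

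First I would pass from the infinite parity vector to a family of finite admissible prefixes. Since $v$ is a back tracing parity vector for $x$, by definition there is an infinite back tracing sequence from $x$ realizing these parities; truncating it after $j$ steps yields a genuine $3$-adic integer of the form $v_{s}(x)$, which shows that every finite initial subsequence of $v$ is admissible for $x$, and the same holds for $y$. A finite initial subsequence corresponds to a vector in $\mathcal{F}$ precisely when it ends in a $1$, and because $v$ has infinitely many $1$'s there are such prefixes containing arbitrarily many $1$'s. Writing $s$ for the feasible vector attached to such a prefix, the length $l(s)$ (the number of $T_1^{-1}$ factors) is therefore unbounded, and each such $s$ is admissible for both $x$ and $y$.

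Next I would invoke the congruence result of Wirsching recalled above: for a feasible vector $s$ with $l(s)=m>0$ there is a unique residue class $a$ modulo $3^{m}$, coprime to $3$, such that $s$ is admissible for an integer exactly when that integer lies in the class $a$. Since admissibility of $s$ for a $3$-adic integer depends only on its residue modulo $3^{m}$ (as noted just before the statement), the same characterization holds for $3$-adic integers. Applying this to each of our prefixes, the admissibility of $s$ for both $x$ and $y$ places them in the same class, so $x\overmod{3^{m}}y$; as $m$ ranges over the unbounded set of lengths that occur, we obtain $x\overmod{3^{k}}y$ for every $k\ge 1$, and therefore $x=y$.

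The one place where care is needed --- and really the only substantive point --- is the use of the hypothesis \emph{infinitely many} $1$'s: this is exactly what makes the admissible prefixes have unbounded length, so that they determine $x$ to arbitrarily high $3$-adic precision. By contrast, the all-zero vector $T_0^{-1},T_0^{-1},\ldots$ is admissible for every $3$-adic integer and pins down nothing. If one prefers not to quote Wirsching, the same conclusion follows by an explicit induction: after $j$ back tracing steps the value has the affine form $(2^{j}x-c_{j})/3^{p_{j}}$ with $c_{j}\in\mathbb{Z}$ and $p_{j}$ the number of $1$'s so far, and requiring this to be a $3$-adic integer forces $x\overmod{3^{p_{j}}}2^{-j}c_{j}$, since $2$ is a unit modulo every power of $3$; letting $p_{j}\to\infty$ again determines $x$ uniquely.
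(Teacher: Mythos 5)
Your proposal is correct and follows essentially the same route as the paper: truncate $v$ at prefixes containing $k$ ones, invoke Wirsching's uniqueness of the admissible residue class modulo $3^{k}$ to force $x\overmod{3^{k}}y$, and let $k\to\infty$. The extra elementary induction you sketch at the end is a fine self-contained substitute for the citation, but the core argument is the paper's.
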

\begin{proof}
Let $v_{k}$ be the smallest initial segment of the sequence $v$
containing $k$ $1$'s. Since $v_{k}$ is admissible for both $x$
and $y$, and since there is a unique congruence class modulo $3^{k}$
for which $v_{k}$ is admissible, we must have $x\xcong{3^{k}}y$.
Since $v_{k}$ exists and is finite for every $k$ by assumption,
it follows that $x\xcong{3^{k}}y$ for every $k$ and thus $x=y$.
\end{proof}
Since the positive integers embed naturally in the $3$-adics, we
can easily deduce a similar result for the positive integers for our
purposes.
\begin{cor}
Let $x$ be a positive integer, and suppose $v$ is a back tracing
parity vector for $x$ containing infinitely many $1$'s. If $v$
is also a back tracing parity vector for the positive integer $y$,
then $x=y$. 
\end{cor}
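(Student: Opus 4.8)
The plan is to deduce this directly from Theorem \ref{unique} by passing through the embedding of the positive integers into the $3$-adic integers $\mathbb{Z}_{3}$. The entire content of the corollary is that the $3$-adic result specializes to ordinary integers, so the work is to check that the hypotheses transfer cleanly under this embedding; once they do, injectivity of $\mathbb{Z}\hookrightarrow\mathbb{Z}_{3}$ finishes the argument.

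First I would record the (already-noted) fact that admissibility of a back tracing parity vector is a purely congruential condition modulo powers of $3$. Concretely, a finite initial segment of $v$ containing $m$ ones is admissible for an integer precisely when that integer lies in the unique residue class mod $3^{m}$ for which the corresponding feasible vector is admissible, and an infinite $v$ is admissible exactly when all of its finite initial segments are. Consequently, whether $v$ is a back tracing parity vector for a positive integer $x$ depends on $x$ only through its residues mod $3^{m}$ for all $m$, i.e. only through its image in $\mathbb{Z}_{3}$. Thus $v$ is a back tracing parity vector for the positive integer $x$ if and only if it is a back tracing parity vector for $x$ regarded as a $3$-adic integer, and likewise for $y$.

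With this observation in hand, the hypotheses of the corollary say exactly that $v$ is a back tracing parity vector containing infinitely many $1$'s for both $x$ and $y$ when these are viewed as elements of $\mathbb{Z}_{3}$. Theorem \ref{unique} then applies verbatim and yields $x=y$ in $\mathbb{Z}_{3}$. Since the natural map $\mathbb{Z}\to\mathbb{Z}_{3}$ is injective, this forces $x=y$ as positive integers, completing the proof.

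I do not expect any genuine obstacle here, since the statement is a formal specialization. The only point demanding care is the transfer of the notion of ``back tracing parity vector'' along the embedding, and this is exactly the content of the remark preceding the corollary: admissibility is detected mod $3^{m}$, and an ordinary integer and its $3$-adic image share all such residues. I would therefore present that equivalence explicitly rather than leaving it implicit, as it is the sole substantive step.
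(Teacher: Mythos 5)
Your proposal is correct and follows exactly the route the paper takes: the paper derives the corollary from Theorem \ref{unique} via the natural embedding of the positive integers into the $3$-adics, having already observed that admissibility of a back tracing vector is detected modulo powers of $3$. You simply spell out the transfer step more explicitly than the paper's one-line deduction does.
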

We now study properties of the back tracing vectors themselves. For
the next result, we consider a back tracing parity vector as the binary
expansion of a $2$-adic integer.
\begin{thm}
\label{NotEventuallyPeriodic} Every back tracing parity vector, considered
as a $2$-adic integer, is either: 
\begin{itemize}
\item [\emph{(a)}] a positive integer (i.e., only a finite number of the
digits are nonzero), 
\item [\emph{(b)}] irrational, or 
\item [\emph{(c)}] immediately periodic (i.e., its binary expansion has
the form $\overline{v_{0}\ldots v_{k}}$ where each $v_{i}\in\{0,1\}$). 
\end{itemize}

In particular, if the back tracing parity vector corresponds to an
infinite back tracing sequence in $\mathcal{I}_{1}$, and it is not
the trivial cycle $10101010\ldots$, then it is either an integer
or irrational. 

\end{thm}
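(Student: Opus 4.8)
The plan is to collapse the three-way classification into a single implication and then let the uniqueness corollary to Theorem~\ref{unique} do the real work. First I would invoke the standard $2$-adic dictionary for the digit sequence of $\alpha=\sum v_i2^i$: it has finitely many nonzero digits exactly when $\alpha$ is a nonnegative integer (case (a)); it fails to be eventually periodic exactly when $\alpha$ is irrational (case (b)); and it is eventually periodic with a nonzero repeating block exactly when $\alpha$ is a rational $2$-adic integer with infinitely many $1$'s, the purely periodic sub-case being case (c). Consequently the entire content of the theorem is the implication: \emph{if a back tracing parity vector $v$ is eventually periodic and has infinitely many $1$'s, then it is purely periodic.} I would record this reduction explicitly, citing the classification of eventually periodic $2$-adic expansions as standard.

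Next I would fix an infinite back tracing sequence $x_0,x_1,x_2,\ldots$ (with $T(x_{i+1})=x_i$) realizing $v$, so $v_i=x_i\bmod2$, and record the key observation that the shift acts as back tracing: for each $k$ the shifted vector $\sigma^k v=(v_k,v_{k+1},\ldots)$ is precisely the back tracing parity vector of the tail $x_k,x_{k+1},\ldots$ from $x_k$. Assuming $v$ is eventually periodic with infinitely many $1$'s, I would let $m\ge0$ be minimal such that $\sigma^m v$ is purely periodic, with a repeating block $v_m,\ldots,v_{m+L-1}$ of length $L$ containing at least one $1$. Since this block is nonzero, $\sigma^m v$ and every further shift of it still have infinitely many $1$'s.

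The crux is to show $m=0$. Pure periodicity of period $L$ gives $\sigma^{m+L}v=\sigma^m v$, and this one bit sequence is simultaneously a back tracing parity vector for $x_m$ and for $x_{m+L}$; because it has infinitely many $1$'s, the corollary to Theorem~\ref{unique} forces $x_{m+L}=x_m$. If $m\ge1$, applying $T$ to this equality and using $T(x_{k+1})=x_k$ yields $x_{m+L-1}=x_{m-1}$, hence the parities match, $v_{m-1}=v_{m+L-1}$. But $v_{m+L-1}$ is the last entry of the repeating block, so prepending $v_{m-1}$ to $\sigma^m v$ again produces a purely periodic sequence; that is, $\sigma^{m-1}v$ is purely periodic, contradicting the minimality of $m$. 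Therefore $m=0$ and $v$ is purely periodic, establishing case (c). For the ``in particular'' statement I would note that a purely periodic $v$ gives (again by $x_L=x_0$) that $x_0$ is a periodic point of $T$; for a sequence in $\mathcal{I}_1$ we have $x_0=1$, whose only $T$-cycle is the trivial cycle $\overline{1,2}$, whose parity vector is exactly $10101010\ldots$, so every other back tracing parity vector in $\mathcal{I}_1$ must be an integer or irrational.

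I expect the main obstacle to be conceptual rather than computational. The natural worry is that a \emph{divergent} back tracing sequence, whose values tend to infinity, might produce an eventually periodic parity vector carrying a genuine nontrivial preperiod; a priori the affine map obtained by composing $T_0^{-1}$ and $T_1^{-1}$ over one period has multiplier $2^L/3^c$, which can exceed $1$, suggesting unbounded tails. The uniqueness corollary dissolves this: pure periodicity of the tail forces the exact value recurrence $x_{m+L}=x_m$, so the tail is in fact bounded and genuinely cyclic, and no divergence can occur. The only remaining care is the short bookkeeping—applying $T$ once to $x_{m+L}=x_m$—that shows the forced parity match eats away the preperiod, which is where I would be most careful to keep the index arithmetic correct.
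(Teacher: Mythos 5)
Your proposal is correct and follows essentially the same route as the paper: reduce to showing that an eventually periodic parity vector with infinitely many $1$'s must be purely periodic, apply the uniqueness corollary of Theorem \ref{unique} to the period-shifted tail to force the value recurrence $x_{m+L}=x_m$, and then strip off the preperiod. The only cosmetic difference is that your minimal-$m$ descent spells out the paper's terser final appeal to the fact that one cannot back trace into a $T$-cycle from outside it.
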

\begin{proof}
Let $v$ be a back tracing parity vector for $x$.

It is known that a $2$-adic is a rational number if and only if its
binary expansion is eventually repeating (or immediately repeating).
Thus, if the digits of $v$ are never periodic, then $v$ satisfies
(b).

Now, suppose $v$ is eventually repeating. If its repeating part contains
only $0$'s, it satisfies (a). So, suppose its repeating part contains
at least one $1$. Let $v=v_{0}v_{1}\ldots v_{i}\overline{v_{i+1}v_{i+2}\ldots v_{i+j}}$,
where one of $v_{i+1}\ldots v_{i+j}$ is $1$.

Since $x$ is a positive integer and $v$ is a back tracing parity
vector for $x$, each initial segment of $v$ must correspond to an
admissible back tracing function for $x$. Thus, the value 
\[
x'=T_{v_{0}}^{-1}\circ\cdots\circ T_{v_{i}}^{-1}(x)
\]
 is an integer, and $\overline{v_{i+1}v_{i+2}\ldots v_{i+j}}$ is
a valid back tracing parity vector for $x'$.

Now, let 
\[
x''=T_{v_{i+1}}^{-1}\circ\cdots\circ T_{v_{i+j}}^{-1}(x').
\]
 By a similar argument, $x''$ is an integer and $\overline{v_{i+1}v_{i+2}\ldots v_{i+j}}$
is a valid back tracing parity vector for $x''$. By Theorem \ref{unique},
it follows that $x''=x'$. Thus, we have 
\[
x'=T_{v_{i+1}}^{-1}\circ\cdots\circ T_{v_{i+j}}^{-1}(x'),
\]
 which implies that $T^{j}(x')=x'$. Thus $x'$ is a periodic point
of $T$. But it is impossible to back trace from $x$ into a cycle
of $T$ unless $x$ itself is in the cycle. It follows that $v$ is
in fact immediately periodic, as desired. 
\end{proof}
Notice that, to prove the nontrivial cycles conjecture, it suffices
to show that the only periodic back tracing parity vector for any
positive integer $x$ is the $2$-cycle $\overline{10}$.

The integer back tracing vectors are relatively easy to understand;
they are formed by back tracing a finite number of steps, and then
multiplying by $2$ indefinitely. Occasionally one is forced into
doing so, for $T_{1}^{-1}$ can only be applied to integers congruent
to $2$ mod $3$. If one first back traces to a multiple of $3$,
then multiplying by $2$ will still result in a multiple of $3$,
and one can never apply $T_{1}^{-1}$.

The irrational back tracing parity vectors are not so easy to understand.
As with most irrational numbers, it is difficult to write one down
explicitly. However, we can bound the limiting fraction of $1$'s
in the back tracing parity vectors as follows.
\begin{lem}
Let $v$ be a back tracing parity vector of some positive integer
$x$. Let $k$ be the number of 1's among the first $n$ digits of
$v$ and $p_{n}=k/n$. Then 
\[
\limsup_{n\rightarrow\infty}p_{n}\le\log_{3}(2)\approx0.6309.
\]
\end{lem}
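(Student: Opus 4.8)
The plan is to forget about the $2$-adic subtleties entirely and instead track the actual integer values that the parity vector encodes, then exploit the single fact that every one of those values is a positive integer and hence at least $1$. Realize $v$ by the infinite back tracing sequence $x = x_0, x_1, x_2, \ldots$ of positive integers to which it is congruent modulo $2$, so that passing from $x_i$ to $x_{i+1}$ applies $T_0^{-1}(y) = 2y$ when the corresponding digit is $0$ and $T_1^{-1}(y) = (2y-1)/3$ when it is $1$. Let $k$ be the number of $1$'s among the first $n$ digits, so exactly $n-k$ of them are $0$.

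The one estimate I would establish is an upper bound on $x_n$. Both $T_0^{-1}$ and $T_1^{-1}$ are increasing; a $0$-step multiplies the current value by exactly $2$, and a $1$-step sends $y$ to $(2y-1)/3 \le \tfrac{2}{3}y$. A one-line induction then propagates the bound
\[
x_n \;\le\; x\cdot 2^{\,n-k}\left(\tfrac{2}{3}\right)^{k} \;=\; \frac{x\cdot 2^{\,n}}{3^{\,k}}.
\]
Because this depends only on how many digits of each type occur, and not on their arrangement, no delicate bookkeeping about the order of the digits is needed.

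To finish I would invoke the crude but decisive lower bound $x_n \ge 1$, valid since $x_n$ is a positive integer. Combining the two inequalities gives $3^{k} \le x\cdot 2^{n}$, hence $k\log 3 \le \log x + n\log 2$. Dividing by $n\log 3$,
\[
p_n \;=\; \frac{k}{n} \;\le\; \frac{\log 2}{\log 3} + \frac{\log x}{\,n\log 3\,},
\]
and letting $n\to\infty$ yields $\limsup_{n\to\infty} p_n \le \log_3 2$, as claimed. I do not expect a genuine obstacle: the only point needing attention is the correct matching of a $1$-digit with an application of $T_1^{-1}$ (the contracting map), after which the proof is a pure growth comparison. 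The mildly surprising feature is that the trivial positivity bound $x_n \ge 1$, rather than any finer structural information about the irrational vectors from Theorem \ref{NotEventuallyPeriodic}, already pins the $\limsup$ down to $\log_3 2$.
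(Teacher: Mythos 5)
Your proof is correct and follows essentially the same route as the paper's: bound the $n$th term of the realizing back tracing sequence above by $x\cdot 2^{n-k}(2/3)^{k}$ and play that against a lower bound forced by integrality. The only difference is cosmetic — the paper compares against the minimum value attained by the sequence (eliminating the dependence on $x$ exactly), while you compare against $1$ and absorb the resulting $\log x/(n\log 3)$ term in the limit; both work.
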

\begin{proof}
Let $f_{n}=T_{v_{0}}^{-1}\circ T_{v_{1}}^{-1}\circ\cdots\circ T_{v_{n-1}}^{-1}$
be the back tracing function corresponding to the first $n$ digits
of $v$. Then $f_{n}(x)$ is a positive integer for all $n$. Thus
there is a minimum value among the values of $f_{n}(x)$. Let $f_{n_{0}}(x)$
be the first occurrence of this minimal value. Then for all $k\ge0$,
$f_{n_{0}}(x)\le f_{n_{0}+k}(x)$.

Now, notice that $T_{1}^{-1}(y)<\frac{2}{3}y$ for all $y$. Therefore,
if a function $f$ is formed by composing $i$ copies of $T_{1}^{-1}$
and $j$ copies of $T_{0}^{-1}$, we have $f(y)\le\left(\frac{2}{3}\right)^{i}2^{j}y$.

Let $t$ be the number of occurrences of $1$ among the first $n_{0}$
digits of $v$, and let $r_{k}$ be the number of occurrences of $1$
among the next $k+1$ digits. Then we have 
\[
f_{n_{0}}(x)\le f_{n_{0}+k}(x)\le\left(2/3\right)^{r}2^{k+1-r}f_{n_{0}}(x)
\]
 and so $3^{r}\le2^{k+1}$. Taking the natural log of both sides,
we find that $r\ln(3)\le(k+1)\ln(2)$. Thus $r\le(k+1)\log_{3}(2)$.

Finally, we have $p_{n_{0}+k}=\frac{r_{k}+t}{n_{0}+k}\le\frac{(k+1)\log_{3}(2)+t}{n_{0}+k}$.
Since $t$ and $n_{0}$ are constant, the right hand side of this
inequality tends to $\log_{3}(2)$ as $k$ approaches infinity, and
so the lim sup of the values of $p_{n}$ is bounded above by this
limit.
\end{proof}

\subsection{Greedy back tracing}

While it is difficult to write down even one irrational infinite back
tracing vector explicitly, there are several ways to obtain such vectors
via a recursion. In particular, we can use a greedy algorithm that
tries to keep the elements of the sequence as small as possible at
each step, with the hopes of gaining insight into the structure of
$\mathcal{G}$ by partitioning it into a union of the following greedy
sequences.
\begin{defn*}
Let $x$ be a positive integer. The \textit{greedy back tracing sequence}
for $x$, denoted $\Greedy(x)$, is the sequence of positive integers
$a_{0},a_{1},\ldots$ defined recursively by $a_{0}=x$ and for all
$i>0$ 
\[
a_{i+1}=\begin{cases}
T_{1}^{-1}\left(a_{i}\right) & \mbox{if }T_{1}^{-1}\left(a_{i}\right)\not\xcong30\\
T_{0}^{-1}\left(a_{i}\right) & \mbox{otherwise}
\end{cases}
\]
We also write $V_{x}$ to denote $\Greedy(x)$ taken mod $2$, the
back tracing parity vector of $\Greedy(x)$.
\end{defn*}
It is easily verified that the recursion for $\Greedy(x)$ can also
be written as $a_{0}=x$ and for all $i>0$ 
\[
a_{i+1}=\begin{cases}
T_{1}^{-1}\left(a_{i}\right) & \mbox{if }a_{i}\xcong92\mbox{ or }a_{i}\xcong98\\
T_{0}^{-1}\left(a_{i}\right) & \mbox{otherwise}
\end{cases}
\]

We now show that for $x$ relatively prime to $3$, the back tracing
parity vector $V_{x}$ corresponding to $\Greedy(x)$ has infinitely
many $1$'s, and therefore that, for instance, $V_{4}$ is irrational.
\begin{lem}
\label{3zeros} Let $x$ be a positive integer relatively prime to
$3$. Then $V_{x}$ can have at most three $0$'s in a row at any
point in the sequence. \end{lem}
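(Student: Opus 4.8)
The plan is to translate the recursion defining $\Greedy(x)$ into the dynamics of multiplication by $2$ on the residues modulo $9$, and then read the maximal length of a block of zeros directly off the resulting orbit. First I would record the two structural facts that make this reformulation legitimate. Since $x$ is relatively prime to $3$, a short induction shows that every term $a_i$ of $\Greedy(x)$ is relatively prime to $3$: the step $T_0^{-1}(a)=2a$ preserves coprimality to $3$, and $T_1^{-1}$ is applied only when $a_i\overmod{9}2$ or $a_i\overmod{9}8$, in which case $T_1^{-1}(a_i)=(2a_i-1)/3$ is again prime to $3$ (congruent to $1$ and to $2$ mod $3$ respectively). Hence every $a_i$ lands in $(\mathbb{Z}/9\mathbb{Z})^{*}=\{1,2,4,5,7,8\}$, which is what keeps us inside a single doubling cycle.

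Next I would pin down what a zero of $V_x$ actually encodes. Because $T_0^{-1}(a)=2a$ is always even, while $3\cdot T_1^{-1}(a)=2a-1$ is odd and so $T_1^{-1}(a)$ is always odd, the parity of each back-traced term records exactly which inverse branch produced it: a $0$ comes from a $T_0^{-1}$ step and a $1$ from a $T_1^{-1}$ step. Consequently a run of consecutive zeros in $V_x$ corresponds to a maximal run of consecutive $T_0^{-1}$ applications, i.e.\ to a chain $a_i, 2a_i, 4a_i,\ldots$ none of whose residues mod $9$ is a \emph{stopping residue} $2$ or $8$, terminated the moment a term falls into $\{2,8\}\bmod 9$, at which point the greedy rule switches to $T_1^{-1}$.

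The heart of the argument is then the orbit of multiplication by $2$ on $(\mathbb{Z}/9\mathbb{Z})^{*}$, namely the $6$-cycle $1\to2\to4\to8\to7\to5\to1$. I would mark the two stopping residues $2$ and $8$ and count how many non-stopping residues can occur consecutively before landing on one of them: from $1$ or $4$ a stop is reached after a single doubling, from $5$ after two ($5\to1\to2$), and from $7$ after three ($7\to5\to1\to2$), while from $2$ or $8$ no $T_0^{-1}$ step is taken at all. Thus from any residue in $(\mathbb{Z}/9\mathbb{Z})^{*}$ repeated doubling meets $\{2,8\}$ within at most three steps, so no maximal block of consecutive $T_0^{-1}$ steps exceeds length three, giving at most three zeros in a row.

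The hard part will not be the mod-$9$ computation, which is routine, but the bookkeeping that links a zero of $V_x$ to a $T_0^{-1}$ step and an entire block of zeros to a single arc of the doubling cycle. In particular one must be careful at the very start of the sequence, where the bit $a_0\bmod 2 = x\bmod 2$ is the parity of the seed itself and is not produced by any back-tracing step; apart from this initial bit the correspondence between zeros and doublings is exact. It is precisely the three-term arc $7\to5\to1$ preceding the stop at $2$ that forces the bound to be three rather than smaller.
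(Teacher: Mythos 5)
Your proof is correct and takes essentially the same route as the paper's: a case analysis of the doubling orbit $1\to2\to4\to8\to7\to5\to1$ on $(\mathbb{Z}/9\mathbb{Z})^{*}$, with $\{2,8\}$ as the residues at which the greedy rule switches to $T_{1}^{-1}$ and the arc $7\to5\to1$ producing the bound of three. The initial-bit caveat you raise is genuine but is shared by the paper's own proof, which only treats runs of $T_{0}^{-1}$'s launched from an odd term: for even $x\overmod{9}7$ (e.g.\ $x=16$, where $\Greedy(x)$ begins $16,32,64,128,85,\ldots$) the vector $V_{x}$ actually opens with four zeros, so the stated bound really applies to runs of zeros that follow a $1$ --- which is all that the subsequent corollary on $\liminf p_{n}$ needs.
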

\begin{proof}
It suffices to show if $y$ is an odd positive integer relatively
prime to $3$, the greedy algorithm applies $T_{0}^{-1}$ at most
three times before applying a $T_{1}^{-1}$.

Suppose $y$ is an odd positive integer relatively prime to $3$.
Then it is congruent to one of $1$, $2$, $4$, $5$, $7$, or $8$
mod $9$.

\textit{Case 1.} Suppose $y$ is congruent to $2$ or $8$ mod $9$.
Then the greedy algorithm applies $T_{1}^{-1}$, and we are done.

\textit{Case 2.} Suppose $y$ is congruent to $1$ or $4$ mod $9$.
Then the greedy algorithm determines that the next integer in the
sequence is $T_{0}^{-1}(y)=2y$, which is congruent to $2$ or $8$
mod $9$. At this point, $T_{1}^{-1}$ is applied, and we are done.

\textit{Case 3.} Suppose $y$ is congruent to $5$ mod $9$. Then
the greedy algorithm applies $T_{0}^{-1}$ to yield an integer congruent
to $1$ mod $9$. Then, $T_{0}^{-1}$ is applied again to obtain an
integer congruent to $2$ mod $9$, and $T_{1}^{-1}$ is applied.

\textit{Case 4.} Suppose $y$ is congruent to $7$ mod $9$. Then
the greedy algorithm applies $T_{0}^{-1}$ to yield an integer congruent
to $5$ mod $9$, and by the above argument, two more $T_{0}^{-1}$'s
are used before applying $T_{1}^{-1}$. 
\end{proof}
We immediately obtain the following fact about greedy vectors.
\begin{cor}
Let $x$ be a positive integer, let $k$ be the number of 1's among
the first $n$ digits of $v$ and $p_{n}=k/n$. Then 
\[
\liminf_{n\rightarrow\infty}p_{n}\ge\frac{1}{4}.
\]
\end{cor}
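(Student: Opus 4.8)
The plan is to deduce this immediately from Lemma~\ref{3zeros} by an elementary counting argument that bounds the number of zeros in terms of the number of ones. Here $v = V_x$ is the greedy back tracing parity vector, so by Lemma~\ref{3zeros} (applied with $x$ relatively prime to $3$) the sequence $v$ contains no run of four or more consecutive $0$'s.

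First I would fix $n$ and let $k$ denote the number of $1$'s among the first $n$ digits of $v$. These $k$ ones partition the first $n$ positions into at most $k+1$ maximal blocks of consecutive $0$'s: the block preceding the first $1$, the blocks strictly between successive $1$'s, and the block following the last $1$ out to position $n$. By Lemma~\ref{3zeros} each such block has length at most $3$, so the number of zeros among the first $n$ digits is at most $3(k+1)$. Consequently
\[
n = k + (\text{number of zeros}) \le k + 3(k+1) = 4k + 3,
\]
which rearranges to $k \ge (n-3)/4$ and hence
\[
p_n = \frac{k}{n} \ge \frac{1}{4} - \frac{3}{4n}.
\]
Letting $n \to \infty$, the right-hand side tends to $\tfrac14$, so $\liminf_{n\to\infty} p_n \ge \tfrac14$, as claimed.

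The only point requiring any care is the treatment of the boundary blocks — in particular, the final block of $0$'s may be cut off by reaching position $n$ before the next $1$ appears, and the leading block may be empty. Neither possibility causes trouble, since both can only decrease the zero count, leaving the bound $3(k+1)$ (and therefore the conclusion) intact. I do not anticipate any genuine obstacle here: once Lemma~\ref{3zeros} is established, the corollary follows directly from this block count, which is why it is stated as an immediate consequence.
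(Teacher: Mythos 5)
Your proof is correct and takes essentially the same approach as the paper: both arguments deduce the bound by elementary counting from Lemma \ref{3zeros}, obtaining $k \ge (n-3)/4$ and passing to the limit. Your block-counting formulation handles the boundary cases slightly more uniformly than the paper's two-case split on whether the $n$th digit is $0$ or $1$, but the substance is identical.
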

\begin{proof}
If the $n$th term of $V_{x}$ is $1$, then each $1$ in the first
$n$ terms is preceded by no more than three $0$'s, by Lemma \ref{3zeros}.
It follows that $p_{n}\ge1/4$ in this case.

Otherwise, the $n$th term is $0$, and the first $n$ terms end in
a string of $k$ zeroes, where $1\le k\le3$. The first $n-k$ terms,
however, have the property that each $1$ is preceded by at most three
$0$'s, so there are at least $(n-k)/4\ge\frac{n-3}{4}$ ones among
the first $n$ terms. As $n$ approaches infinity, the lower bound
approaches $n/4$, and so $\liminf_{n\rightarrow\infty}p_{n}\ge\frac{1}{4}$,
as desired. 
\end{proof}
This gives a lower bound on the limiting percentage of $1$'s in a
greedy back tracing vector. Since we are greedily choosing to apply
$T_{1}^{-1}$ whenever possible, it would be of interest to determine
whether the greedy algorithm does maximize the percentage of $1$'s
in a back tracing parity vector starting from $x$, and what that
percentage is precisely. We leave this as an open problem for further
study.

Having studied infinite back tracing sequences in some depth, we return
to the problem of finding arithmetic progressions in the $3x+1$ graph.

\subsection{A strongly sufficient arithmetic progression}

We can obtain surprising information about infinite back tracing sequences
when we look modulo certain integers. We begin by proving the following
remarkable fact.
\begin{thm}
\label{2mod9} Let $x$ be a positive integer relatively prime to
$3$. Then every infinite back tracing sequence in $\widetilde{\mathcal{I}}_{x}$
contains a positive integer congruent to $2$ mod $9$. \end{thm}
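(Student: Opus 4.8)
The plan is to argue by contradiction. Suppose some infinite back tracing sequence $a_{0}=x,a_{1},a_{2},\ldots$ in $\widetilde{\mathcal{I}}_{x}$ never contains an element $\overmod{9}2$. Every $a_{i}$ is coprime to $3$, so each lies in one of the five residue classes $1,4,5,7,8$ modulo $9$, and each step is either $a_{i+1}=T_{0}^{-1}(a_{i})=2a_{i}$ or $a_{i+1}=T_{1}^{-1}(a_{i})=(2a_{i}-1)/3$, the latter being available only when $a_{i}\overmod{3}2$ and landing in $\widetilde{\mathcal{G}}$ only when the quotient is coprime to $3$. The strategy is to encode the legal moves as a finite digraph $H$ on $\{1,4,5,7,8\}$ and show that its shape forbids any infinite walk.

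First I would tabulate the two inverse maps modulo $9$. Since $T_{0}^{-1}$ is multiplication by $2$, it sends $1\mapsto2$, $4\mapsto8$, $5\mapsto1$, $7\mapsto5$, $8\mapsto7$; and $T_{1}^{-1}$ is defined only on the residues $2,5,8\bmod9$. The two delicate entries are that for $a_{i}\overmod{9}5$ one has $2a_{i}-1\overmod{9}0$, so $(2a_{i}-1)/3$ is divisible by $3$ and leaves $\widetilde{\mathcal{G}}$, while for $a_{i}\overmod{9}8$ the quotient is coprime to $3$ and congruent to $2$, $5$, or $8$ modulo $9$ according to $a_{i}\bmod27$. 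Recording only the moves that avoid the class $2\bmod9$ and stay in the pruned graph gives the edges of $H$: from $1$ there is no legal move at all (since $T_{0}^{-1}$ lands on $2$ and $T_{1}^{-1}$ is undefined), together with $4\to8$, $5\to1$, $7\to5$, and from $8$ the moves $8\to7$, $8\to5$, and the self-loop $8\to8$ (the last realized only by $T_{1}^{-1}$, for suitable $a_{i}\bmod27$).

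Next I would read the structure of $H$ off this table. The vertex $1$ is a dead end; the vertex $4$ has no incoming edge, so it can occur only as $a_{0}$; and there is no walk from $\{5,7\}$ back to $8$, since $5\to1$ and $7\to5\to1$ both terminate. Hence the self-loop at $8$ is the only cycle in $H$, and any vertex visited infinitely often must be $8$. It follows that an infinite legal walk must, after finitely many steps, remain at the residue $8\bmod9$ and apply $T_{1}^{-1}$ at every step. I expect this structural claim to be the main obstacle, in the sense that the whole argument rests on getting the two subtle table entries right — that $T_{1}^{-1}$ exits the pruned graph from $5\bmod9$, and that $1\bmod9$ has no legal successor — after which the digraph's shape forces the all-$T_{1}^{-1}$ tail automatically.

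Finally I would dispatch this tail. For every positive integer $y$ we have $T_{1}^{-1}(y)=(2y-1)/3<y$, so an infinite run of consecutive applications of $T_{1}^{-1}$ yields an infinite strictly decreasing sequence of positive integers, which is impossible. (Alternatively one can invoke Theorem \ref{NotEventuallyPeriodic}: the tail is the immediately periodic parity vector $\overline{1}$, which would force a positive integer to lie on an all-red cycle, contradicting that $T_{1}$ strictly increases positive integers.) This contradiction rules out the assumed sequence, so every infinite back tracing sequence in $\widetilde{\mathcal{I}}_{x}$ must contain an element congruent to $2$ modulo $9$.
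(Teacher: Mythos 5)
Your proof is correct and follows essentially the same route as the paper: both reduce the problem to the transition graph on the residues $\{1,4,5,7,8\}$ mod $9$ with the node $2$ deleted, verify the same two delicate edges (that $T_{1}^{-1}$ exits the pruned graph from $5\bmod 9$ and that the red self-loop at $8$ is the only surviving cycle), and conclude that an infinite path must eventually be the all-red loop at $8$. The only difference is the final contradiction --- the paper invokes Theorem \ref{unique} (the all-$1$'s parity vector belongs to $-1$ and hence to no positive integer), whereas you use the more elementary observation that $T_{1}^{-1}(y)<y$ forces an impossible infinite descent; both are valid.
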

\begin{proof}
We first draw a directed graph to represent of the action of $T_{0}$
and $T_{1}$ on the elements of $\mathbb{Z}/9\mathbb{Z}$ relatively
prime to $3$, as shown in figure \ref{9}. Denote this directed graph
by $\widetilde{\Gamma}_{9}$. Notice that any infinite back tracing
sequence in $\widetilde{\mathcal{I}}_{x}$, taken mod $9$, defines
a sequence of residues traced out by an infinite path along the arrows
in $\widetilde{\Gamma}_{9}$ in the \textit{reverse} direction (against
the arrows). We call such a path a \textit{reverse path}.

\begin{figure}[h]
\begin{centering}
\includegraphics{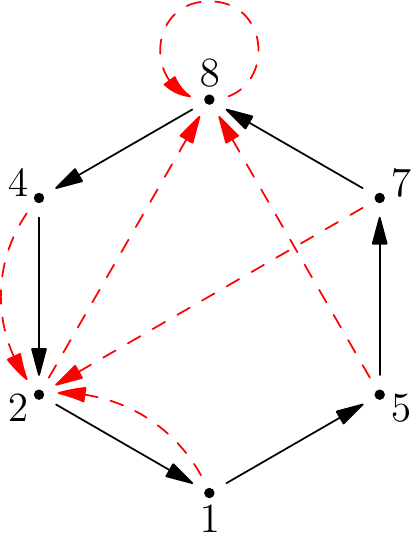} 
\par\end{centering}

\caption{\label{9}The action of $T_{0}$ and $T_{1}$ on the residues mod
$9$ relatively prime to $3$.}
\end{figure}

Let $v\in\widetilde{\mathcal{I}}_{x}$ be an arbitrary back tracing
sequence avoiding multiples of $3$, and let $P$ be the corresponding
reverse path on $\Gamma_{9}$. Assume to the contrary that $v$ is
does not contain an integer congruent to $2$ mod $9$. Then the path
$P$ avoids the node labeled $2$, and so it lies entirely in the
subgraph $\Gamma_{9}'$ shown in Figure \ref{9no2}.

\begin{figure}[h]
\begin{centering}
\includegraphics{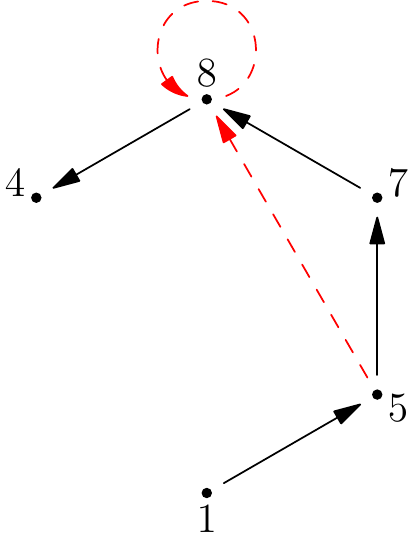} 
\par\end{centering}

\caption{\label{9no2}The subgraph $\Gamma_{9}'$ of $\widetilde{\Gamma}_{9}$
formed by deleting the node labeled $2$.}
\end{figure}

Now, since the path $P$ in is infinite, it cannot contain the nodes
$7$, $5$, or $1$, since if we travel backwards along the edges
from these nodes we must end up at $1$, from which we cannot travel
further. Furthermore, if $P$ begins at the node $4$, then it must
travel to the node $8$, where it is locked into the red loop at $8$.
But by Theorem \ref{NotEventuallyPeriodic}, this is impossible. Thus
$P$ is the cyclic path $8,8,8,\ldots$.

Since $P$ must consist only of red arrows, the back tracing parity
vector corresponding to $v$ is the all-$1$'s vector, which is a
valid back tracing parity vector for the integer $-1$. But by Theorem
\ref{unique}, it can therefore not be a valid back tracing parity
vector for any other $3$-adic integer, and in particular, it cannot
be a valid back tracing parity vector for any positive integer. Thus,
we have a contradiction, and so $v$ must in fact contain an integer
congruent to $2$ mod $9$. 
\end{proof}
Thus, for the arithmetic sequence $S=\{2+9n\}$, not only can we back
trace from any $x\not\overmod{3}0$ to an element of $S$, but we
cannot avoid doing so no matter how we back trace from $x$. We say
such sets $S$ are \textit{strongly sufficient in the backward direction},
or simply \textit{backward sufficient}.

Notice that the same argument applies to the forward direction: by
looking at (ordinary, not reverse) paths in $\widetilde{\Gamma}_{9}$,
we see that any forward $T$-orbit must contain an integer congruent
to $2$ mod $9$.
\begin{cor}
The $T$-orbit of every positive integer contains an integer congruent
to $2$ mod $9$.
\end{cor}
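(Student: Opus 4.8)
The plan is to mirror the proof of Theorem \ref{2mod9}, but now reading the graph $\widetilde{\Gamma}_9$ of Figure \ref{9} in the \emph{forward} direction rather than tracing reverse paths. First I would reduce to the case where $x$ is relatively prime to $3$: as noted in the introduction, the forward $T$-orbit of any positive integer eventually reaches an integer not divisible by $3$, after which every subsequent term is also relatively prime to $3$. Since any element of this tail that is congruent to $2 \bmod 9$ already lies in the full orbit of $x$, it suffices to prove the claim for $x \in \bbN - 3\bbN$.

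Given such an $x$, the forward orbit $x, T(x), T^2(x), \ldots$ stays in $\widetilde{\mathcal{G}}$, so reducing each term mod $9$ produces an infinite \emph{forward} path in $\widetilde{\Gamma}_9$ (each step being the black edge $T_0$ when the current term is even and the red edge $T_1$ when it is odd). Arguing by contradiction, I would assume the orbit avoids the residue $2$, so this forward path lives entirely in the subgraph $\Gamma_9'$ of Figure \ref{9no2}. The key combinatorial observation is that in $\Gamma_9'$ the node $4$ is a forward dead end---both of its out-edges in $\widetilde{\Gamma}_9$ point to the deleted node $2$---while every other surviving node flows forward into $8$ (one checks $1 \to 5$, $5 \to \{7,8\}$, and $7 \to 8$, with no edges returning to $\{1,5,7\}$). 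Hence after finitely many steps the path must sit at $8$, and to remain infinite it cannot take the black edge $8 \to 4$ into the dead end; it is therefore forced onto the red self-loop at $8$ forever.

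This means the forward parity vector of the orbit is eventually the all-$1$'s vector; that is, from some point on the orbit applies $T_1$ at every step. The final step is to rule this out, which is the forward analogue of the $-1$ obstruction used at the end of Theorem \ref{2mod9}: the only $2$-adic integer whose forward parity vector is constantly $1$ is the fixed point $-1$ of $T_1$, and no positive integer is $-1$. Equivalently, $T$ carries positive integers to positive integers, so the orbit can never degenerate into the $T_1$-cycle at $-1$. This contradiction forces the orbit to contain a term congruent to $2 \bmod 9$, completing the proof.

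I expect the main obstacle to be this last step---cleanly justifying that a positive orbit cannot apply $T_1$ indefinitely. In the backward setting this was handled by Theorems \ref{unique} and \ref{NotEventuallyPeriodic}; in the forward setting I would instead lean on the classical injectivity of forward parity vectors modulo $2^n$, or give the short self-contained argument that applying $T_1$ exactly $n$ times in a row requires the starting value $y$ to satisfy $y \equiv -1 \pmod{2^n}$, which fails for any fixed positive integer once $n$ exceeds the $2$-adic valuation of $y+1$.
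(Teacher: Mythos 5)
Your proof is correct and takes essentially the same approach as the paper: the paper proves this corollary in one line by remarking that the argument of Theorem \ref{2mod9} ``applies to the forward direction,'' and you have simply spelled out that forward version in full, including the correct forward analogue of the final obstruction (an all-ones forward parity vector forces $y\equiv -1\pmod{2^n}$ for every $n$, which no positive integer satisfies). The graph analysis of $\Gamma_9'$ (node $4$ a forward dead end, $1,5,7$ transient, the path trapped in the red loop at $8$) matches what the paper intends, so there is nothing to add.
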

This essentially ``proves the Collatz conjecture mod $9$'', and
we say that $S=\{2+9n\}$ is \textit{strongly sufficient in the forward
direction}, or simply \textit{forward sufficient}. We define these
notions precisely in the next section.

\section{Strong sufficiency and directed graphs}

\label{forward}

We define strong sufficiency in both the forward and backward directions,
and also for the special case of nontrivial cycles, as follows.
\begin{defn*}
Let $S$ be a set of positive integers. Then 
\begin{itemize}
\item $S$ is \textit{forward sufficient} if every divergent $T$-orbit
contains an element of $S$. 
\item $S$ is \textit{cycle sufficient} if every nontrivial cycle contains
an element of $S$. 
\item $S$ is \textit{backward sufficient} if for every positive integer
$x$ relatively prime to $3$, every element of $\widetilde{\mathcal{I}}_{x}$
having an irrational back tracing parity vector contains an element
of $S$. 
\item $S$ is \textit{strongly sufficient} if it is forward sufficient,
cycle sufficient, and backward sufficient. 
\end{itemize}
\end{defn*}
\begin{notation*}
For simplicity in what follows, we write $\{a_{1},\ldots,a_{k}\bmod d\}$
to denote the set of positive integers congruent to one of $a_{1},a_{2},\ldots,a_{k}$
mod $d$. We sometimes drop the brackets when the notation is clear.
So we would say that $\{2\bmod9\}$, or $2\bmod9$, is strongly sufficient.\end{notation*}
\begin{rem*}
In the case of $2$ mod $9$, we did not need to restrict our claim
to the divergent orbits, the nontrivial cycles, and the aperiodic
infinite back tracing sequences, because the cycle $\overline{1,2}$
itself contains an element congruent to $2$ mod $9$. However, there
are many sets $S$ that intersect those $T$-orbits and back tracing
sequences that do not end in $\overline{1,2}$, but do not intersect
\textit{every} $T$-orbit simply because $S$ does not contain $1$
or $2$. For this reason, we throw away the back tracing sequences
and orbits that end in $\overline{1,2}$ in our definition of strong
sufficiency.
\end{rem*}
Notice also that it suffices to prove the $3x+1$ conjecture for the
elements of any single strongly sufficient set, and that for any strongly
sufficient set $S$, $S\cup\{1\}$ and $S\cup\{2\}$ are sufficient
sets. Moreover, not only does every positive integer $x$ \textit{merge}
with an element of $S\cup\{1\}$ (or $S\cup\{2\}$), but it actually
\textit{contains} one in its $T$-orbit and in every element of $\widetilde{\mathcal{I}}_{x}$.
Hence the term ``strong sufficiency.'' 
\begin{rem*}
Obtaining strongly sufficient sets give us a promising way in which
to approach the nontrivial cycles conjecture. In particular, suppose
we can show that a fixed, finite set of residues $a_{1},\ldots,a_{k}$,
modulo a set of arbitrarily large values of $n$, is strongly sufficient
for each of these moduli $n$. Then any nontrivial cycle, being bounded,
must contain one of the positive integers $a_{1},\ldots,a_{k}$, and
so we would only need to verify that the finite list of positive integers
$a_{1},\ldots,a_{k}$ have a $T$-orbit that contains $1$. 
\end{rem*}
In light of this remark, we begin a search for strongly sufficient
sets. To do so, we first define a generalization of the directed graph
$\Gamma_{9}$.

\subsection{The graphs $\Gamma_{d}$ and $\widetilde{\Gamma}_{d}$}

We define the graph $\Gamma_{d}$ to be the $3x+1$ graph $\mathcal{G}$
taken modulo $d$, as follows.
\begin{defn*}
For a positive integer $k$, define $\Gamma_{k}$ to be the two-colored
directed graph on $\mathbb{Z}/k\mathbb{Z}$ such that 
\begin{itemize}
\item there is a black arrow from $r$ to $s$ if and only if there exist
positive integers $x$ and $y$ with $x\overmod{k}r$ and $y\overmod{k}s$
with $T_{0}(x)=y$, and 
\item there is a red arrow from $r$ to $s$ if and only if there exist
positive integers $x$ and $y$ with $x\overmod{k}r$ and $y\overmod{k}s$
with $T_{1}(x)=y$. 
\end{itemize}
\end{defn*}
As with the $3x+1$ graph $\mathcal{G}$, since we are primarily interested
in the portions of $T$-orbits and infinite back tracing sequences
whose elements are all relatively prime to $3$, we also consider
the pruned graph $\widetilde{\mathcal{G}}$ taken modulo $d$.
\begin{defn*}
For $d\overmod{3}0$, the pruned graph $\widetilde{\Gamma}_{d}$ is
the subgraph of $\Gamma_{d}$ formed by deleting the nodes divisible
by $3$ (along with all of their adjacent edges). When $d\overmod{3}0$,
we define $\widetilde{\Gamma}_{d}=\Gamma_{d}$. 
\end{defn*}
Notice that when we refer to a node $z$ in $\widetilde{\Gamma}_{d}$
or $\Gamma_{d}$ we identify the congruence class $z$ with the integer
in $\left\{ 0,1,\ldots,d-1\right\} $ that is in that class. For $b$
relatively prime to $2$ and $3$, the graph $\Gamma_{b}$ is a natural
representation of the action of $T_{0}$ and $T_{1}$ on $\mathbb{Z}/b\mathbb{Z}$
in the group $G_{b}$. Examples are given in Figure \ref{exampledigraphs}.

\begin{figure}[h]
\begin{centering}
\includegraphics{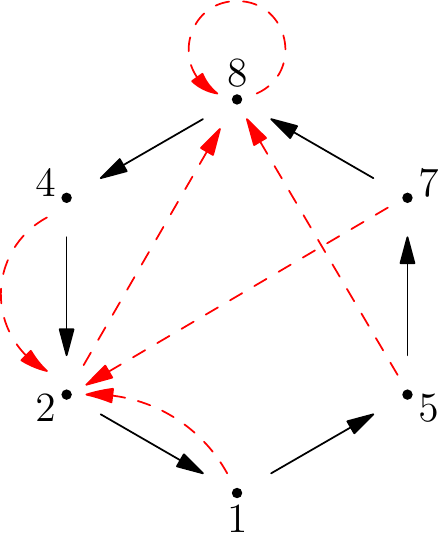} 
\par\end{centering}

\caption{\label{exampledigraphs} The digraphs $\widetilde{\Gamma}_{7}=\Gamma_{7}$
(top left), $\widetilde{\Gamma}_{8}=\Gamma_{8}$ (bottom), and $\Gamma_{9}$
(top right), which strictly contains $\widetilde{\Gamma}_{9}$.}
\end{figure}

We now demonstrate several basic properties of the graphs $\Gamma_{d}$
for various $d$.
\begin{prop}
\label{GammaStructure}Let $d$ be a positive integer. 
\begin{itemize}
\item [\emph{(a)}]\label{part1} If $d$ is even, each even node has two
black arrows and no red arrows coming from it, each odd node has two
red arrows and no black arrows coming from it. If $d$ is odd, each
node has exactly one red and black arrow coming from it. 
\item [\emph{(b)}]\label{part2} If $d\overmod{3}0$, each node congruent
to $2$ modulo $3$ has exactly one black arrow and at least red arrow
pointing to it, and each other node has one black arrow and no red
arrows pointing to it. If $d\underset{3}{\not\equiv}0$, then every
node has one black and one red arrow pointing to it. 
\item [\emph{(c)}]\label{part3} If $d$ is relatively prime to $2$ and
$3$, then in $\Gamma_{d}$, the black arrows form disjoint cycles
on the vertices, as do the red arrows. There is one black loop at
$0$, and each of the other black cycles have length dividing the
order of $2$ mod $d$. There is one red loop at $d-1$, and each
of the other red cycles have length dividing the order of $3/2$ mod
$d$. 
\item [\emph{(d)}]\label{part4} If $d=3^{m}$ for some $m$, then in
$\Gamma_{d}$, the black arrows form a single cycle on the nodes which
are relatively prime to $3$. For $i=1,\ldots,m-1$, there is also
a cycle of black arrows consisting of the nodes divisible by $3^{i}$
but not by $3^{i+1}$, and a black loop at the node $0$. 
\item [\emph{(e)}]\label{part5} If $d=3^{m}$ for some $m$, then in
$\Gamma_{d}$, the red arrows form a rooted oriented tree with $3^{m}-1$
as the root and all arrows oriented towards the root, plus a red loop
at the root. The length of the shortest red path from any leaf to
the root is $m$. 
\end{itemize}
\end{prop}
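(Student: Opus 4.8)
The plan is to prove (a) and (b) as purely local degree computations and then read off (c)--(e) from the affine action of $T_0,T_1$ on $\mathbb{Z}/d\mathbb{Z}$. For (a) I would fix a node $r$ and ask which maps can be applied to an $x\equiv r\pmod d$ and what the image is: when $d$ is even, the residue of $x$ mod $d$ fixes its parity, so only one color is admissible, and as $x$ ranges over $r+d\mathbb{Z}$ the image $x/2$ (resp. $(3x+1)/2$) ranges over a class mod $d/2$, giving exactly two arrows of that color; when $d$ is odd, $2$ is invertible mod $d$, each class contains both parities, and $T_0(x)\equiv 2^{-1}r$ and $T_1(x)\equiv 2^{-1}(3r+1)$ are each single residues, giving one arrow of each color. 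For (b) I would dualize: the black preimage of $s$ is the single residue $2s$, so every node has exactly one incoming black arrow. For red, $T_1(x)=y$ forces $x=(2y-1)/3$, an integer exactly when $y\equiv 2\pmod 3$, and writing $y=3k+2$ yields $x=2k+1$, automatically odd, so integrality is the only constraint. Thus $3\mid d$ makes $s\bmod 3$ visible and red in-arrows occur exactly at nodes $s\equiv 2\pmod 3$ (a coset count along $s+d\mathbb{Z}$ gives at least one, indeed three); when $3\nmid d$, the CRT produces a valid $y\equiv s\pmod d$ with $y\equiv 2\pmod 3$, and $x\equiv 3^{-1}(2s-1)$ is the unique red predecessor.

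For (c), when $\gcd(d,6)=1$, parts (a) and (b) give both colors in-degree and out-degree $1$, so each color is a permutation of $\mathbb{Z}/d\mathbb{Z}$ and decomposes into disjoint cycles. Black is $r\mapsto 2^{-1}r$, whose only fixed point is $0$ and whose orbit through $r\neq 0$ has length equal to the order of $2$ mod $d/\gcd(d,r)$, dividing the order of $2$ mod $d$. For red I would invoke the conjugacy $T_1(x)+1=\tfrac{3}{2}(x+1)$ (equivalently $E_0=PT_1P^{-1}$, already established): in the coordinate $u=x+1$ red is multiplication by $3/2$, with unique fixed point $u=0$, i.e. $x=d-1$, and orbit lengths dividing the order of $3/2$ mod $d$. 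For (d) with $d=3^m$, black $r\mapsto 2^{-1}r$ is a bijection, and since $2$ is a primitive root mod $3^m$ it acts as a single cycle on $(\mathbb{Z}/3^m)^*$; stratifying a general node as $r=3^iu$ with $\gcd(u,3)=1$ reduces the claim to the same statement mod $3^{m-i}$, producing exactly one black cycle for each $i=0,\dots,m-1$ together with the loop at $0$.

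For (e), again $d=3^m$ and $u=x+1$: red is $u\mapsto(3\cdot 2^{-1})u$, and because $2^{-1}$ is a unit, each application raises the $3$-adic valuation $v_3(u)$ by exactly $1$ until it reaches $0$, the root $x=d-1$ (a fixed point). This single observation simultaneously shows that every orbit reaches the root, forbids any nontrivial red cycle (valuations strictly increase), identifies the red graph as an in-tree rooted at $d-1$ with a loop there, and --- since the leaves are exactly the nodes with $v_3(u)=0$, i.e. $u\in(\mathbb{Z}/3^m)^*$ --- shows the shortest red path from any leaf to the root has length exactly $m$. The main obstacle I expect is precisely this valuation argument in (e): modulo $3^m$ the multiplier $3/2$ is a non-unit, so one cannot simply invert and must instead track valuations, and then deduce acyclicity and the uniform depth cleanly. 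The other delicate point is the red in-degree bookkeeping of (b), especially the automatic oddness of $(2y-1)/3$ and the coset count of incoming red arrows when $3\mid d$.
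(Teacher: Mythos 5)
Your proof is correct and follows essentially the same route as the paper's: local in/out-degree counts for (a) and (b), the permutation/affine structure of $T_0,T_1$ for (c) and (d), and for (e) your $3$-adic valuation $v_3(x+1)$ is exactly the paper's ``grade'' $M$ (the largest $M$ with $x\overmod{3^M}-1$), tracked forward along red arrows rather than backward along $T_1^{-1}$. The extra details you supply (the count of three incoming red arrows when $3\mid d$, and the explicit orbit-length computation in (c)) are consistent refinements of what the paper asserts.
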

\begin{proof}
Claim (c) follows from Lemma \ref{orders} and the fact that $T_{0}$
and $T_{1}$ generate a permutation group on $\mathbb{Z}/b\mathbb{Z}$.

For claim (a), note that dividing by $2$ modulo some even $d$ can
be done in two ways: either $2r\mapsto r$ or $2r\mapsto r+\frac{1}{2}d$.
Thus, if we send a congruence class $x$ to $x/2$ or to $(3x+1)/2$
modulo $d$, in both cases we have exactly two possible results for
the congruence class of $T(x)$ mod $2^{n}$. It follows that each
even node has two red arrows and two black arrows coming from it.
If $d$ is odd, then $2$ is invertible modulo $d$, and so $T(x)$
is well defined.

For claim (b), note that $T_{0}^{-1}(x)=2x$ is a well-defined function
on $\mathbb{Z}/d\mathbb{Z}$ for all $d$, but $T_{1}^{-1}(x)=(2x-1)/3$
is well-defined if and only if $d$ is not divisible by $3$. Thus,
if $d$ is not divisible by $3$, there is one red and one black arrow
pointing to every node. If $d$ is divisible by $3$, however, then
only those $x$ congruent to $2$ modulo $3$ can have a red arrow
pointing to it.

For part (d), it is known that $2$ is a primitive root mod $3^{m}$
(see \cite{Hua}), so the black arrows behave as described.

We now prove part (e). To do so, we consider the purely red back tracing
paths starting at integers congruent to $-1$ mod $3^{m}$. Suppose
$x\overmod{3^{m}}-1$, and let $M\ge m$ be the largest positive integer
such that $x\overmod{3^{M}}-1$. Then we can write $x=3^{M}k-1$ where
$k$ is relatively prime to $3$.

Back tracing along a red arrow from $x$, we have that $T_{1}^{-1}(x)=(2x-1)/3=2\cdot3^{M-1}k-1$,
so $T_{1}^{-1}(x)$ is congruent to $-1$ mod $M-1$. If $M-1\ge m$,
we have that it is also congruent to $-1$ mod $m$. Thus, for the
first $M-m$ steps in back tracing along red arrows, we follow a self-loop
in $\Gamma_{3^{m}}$ from $3^{m}-1$ to itself. In particular, this
loop exists in the graph, since there are positive integers congruent
to $-1$ mod $3^{M}$ for any $M>m$.

Now, choose an integer $x$ such that $M=m$, that is, $m$ is the
maximum positive integer for which $x\overmod{3^{m}}-1$. Then by
a similar argument, $T_{1}^{-1}(x)\overmod{3^{m-1}}-1$, and by induction
we have 
\[
(T_{1}^{-1})^{k}(x)\overmod{3^{m-k}}-1
\]
 for all $k\ge0$. Thus $(T_{1}^{-1})^{m-1}(x)\overmod{3}-1$. It
follows that $(T_{1}^{-1})^{m}(x)$ is congruent to either $0$ or
$1$ mod $3$, and so we cannot back trace using $T_{1}^{-1}$ any
further from here.

Note also that for each step in this process, the maximum $M$ for
which $(T_{1}^{-1})^{k}(x)\overmod{3^{M}}-1$ is monotone decreasing
by $1$ at each step. Thus we can partition the congruence classes
mod $3^{m}$ into grades based on the value of $M$, with the final
grade consisting of those residues congruent to $0$ or $1$ mod $3$,
and we see that each element in the back tracing sequence from $x$
is in a distinct grade. Moreover, each of these sequences, starting
from $M=m$, has length $m$. It follows that the red arrows do indeed
form a tree oriented towards the root at $3^{m}-1$, with the shortest
path from any leaf to the root having length $m$. 
\end{proof}

\subsection{Vertex minors and strong sufficiency}

Using the digraphs $\Gamma_{d}$ for various $d$, we can obtain several
new strongly sufficient sets. In order to do so, we first give a graph-theoretic
criterion for strong sufficiency. 
\begin{prop}
\label{strongsufficiency} Let $d\in\mathbb{N}$, and let $a_{1},\ldots,a_{k}$
be $k$ distinct residues mod $d$. Define $\Gamma_{d}'$ to be the
subgraph of $\widetilde{\Gamma}_{d}$ formed by deleting the nodes
labeled $a_{1},\ldots,a_{k}$ and all arrows connected to them and
define $\Gamma_{d}''$ to be the graph formed by deleting any edge
from $\Gamma_{d}'$ that is not contained in any cycle in $\Gamma_{d}'$.
If $\Gamma_{d}''$ is a disjoint union of cycles and isolated vertices,
and each of the cycles have length less than $630,\!138,\!897$, then
the set 
\[
a_{1},\ldots,a_{k}\bmod d
\]
 is strongly sufficient. \end{prop}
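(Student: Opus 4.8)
The plan is to prove the three sufficiency conditions simultaneously by reducing each hypothetical escaping orbit modulo $d$ to an infinite walk in $\widetilde{\Gamma}_d$ that avoids the deleted nodes, exactly as in the proof of Theorem \ref{2mod9}, and then to use the graph hypotheses to trap that walk inside a single cycle of $\Gamma_d''$. The key is a confinement lemma: any infinite forward walk, infinite reverse walk, or closed walk in $\Gamma_d'$ eventually lies entirely in $\Gamma_d''$ and from then on simply runs around one of its component cycles. To prove it, I would note that any edge of $\Gamma_d'$ lying on no directed cycle can be traversed at most once by any walk, since after crossing it there is no directed path back to its tail; hence an infinite walk spends all but finitely many steps on the edges retained in $\Gamma_d''$. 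As $\Gamma_d''$ is assumed to be a disjoint union of simple cycles and isolated vertices, each of its non-isolated vertices has out-degree exactly $1$, so the tail of the walk is forced around a single cycle $C$. For a reverse path I would run the identical argument in the reversed graph, using that reversal carries cycles to cycles and therefore preserves the structural hypothesis on $\Gamma_d''$.

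With the lemma established, each case reduces to reading the parity data off $C$. First note that the relevant elements are eventually coprime to $3$ — sequences in $\widetilde{\mathcal{I}}_x$ avoid multiples of $3$ by construction, a nontrivial cycle contains no multiple of $3$, and a divergent orbit is eventually coprime to $3$ — so all reductions genuinely live in $\widetilde{\Gamma}_d$. Because the color of each edge around $C$ records whether that step applied $T_0$ or $T_1$, confinement to $C$ makes the corresponding (forward or back tracing) parity vector eventually periodic, with period equal to the color pattern of $C$. For backward sufficiency this finishes things at once: an eventually periodic back tracing parity vector either has finitely many $1$'s, hence is a positive integer (case (a) of Theorem \ref{NotEventuallyPeriodic}), or has infinitely many $1$'s, in which case Theorem \ref{NotEventuallyPeriodic} forces it to be immediately periodic and so rational; neither is irrational, contradicting the definition of backward sufficiency. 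For forward sufficiency, an eventually periodic forward parity vector corresponds, through the standard conjugacy of $T$ with the shift on parity vectors, to a positive integer whose $T$-orbit is eventually periodic; such an orbit is bounded and hence not divergent, a contradiction.

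The cycle case is where the numerical bound enters, and I expect it to be the main obstacle. Here the reduction of the $T$-cycle is a closed walk, so every edge it uses lies on a cycle of $\Gamma_d'$; the whole reduction therefore sits in $\Gamma_d''$ and runs around one component cycle $C$ of length $\ell$. Letting $g(x)=(3^{j}/2^{\ell})x+\beta$ be the affine ``once-around-$C$'' map, where $j$ is the number of red edges of $C$, its multiplier $3^{j}/2^{\ell}$ is never $1$, so $g$ has a unique fixed point and no other periodic points. Sampling the $T$-cycle every $\ell$ steps gives a periodic $g$-orbit, which must therefore be that single fixed point; this forces the $T$-cycle to close after exactly $\ell$ steps, so its period is $\ell$. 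A nontrivial cycle avoiding $a_1,\dots,a_k$ would then have length $\ell<630,\!138,\!897$, contradicting the known computational lower bound on the number of terms in any nontrivial cycle. The three contradictions together show that $a_1,\dots,a_k \bmod d$ is forward, cycle, and backward sufficient, that is, strongly sufficient.

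The delicate points to verify carefully will be that the $T$-period is exactly $\ell$ rather than merely a multiple of it — which is what lets the small-cycle hypothesis bite — and that confinement to $C$ yields genuine eventual periodicity of the \emph{parity vector}, not just of the residues modulo $d$; both follow from the injectivity of the affine step maps and from the out-degree and color structure of $\widetilde{\Gamma}_d$ recorded in Proposition \ref{GammaStructure}.
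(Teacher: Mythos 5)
Your proposal is correct and follows essentially the same route as the paper: reduce the orbit or back tracing sequence modulo $d$, observe that edges of $\Gamma_d'$ on no cycle are crossed at most once so the walk is eventually trapped in a single cycle of $\Gamma_d''$, conclude the parity vector is (eventually) periodic, and contradict the known nonexistence of short nontrivial cycles (with the reversed graph handling backward sufficiency). Your treatment is slightly sharper in places the paper leaves implicit --- notably disposing of divergent orbits by boundedness alone, without invoking the numerical bound, and pinning down the period via the affine once-around map --- but these are refinements of the same argument rather than a different one.
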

\begin{proof}
Suppose $\Gamma_{d}''$ is a disjoint union of cycles and isolated
vertices, and each of the cycles have length less than $630,\!138,\!897$.

We first show that $\{a_{1},\ldots,a_{k}\bmod d\}$ is forward sufficient
and cycle sufficient. Assume for contradiction that there is a positive
integer $x$ whose $T$-orbit, taken mod $d$, does \textit{not} end
in the cycle $\overline{1,2}$ and also avoids the set $\{a_{1},\ldots,a_{k}\bmod d\}$.
Since we are interested in the long-term behavior of this orbit, we
may assume without loss of generality that $x$ and all the elements
of its $T$-orbit are relatively prime to $3$, and hence trace out
an infinite path $P$ in $\widetilde{\Gamma}_{d}$. Since the $T$-orbit
of $x$ avoids $\{a_{1},\ldots,a_{k}\bmod d\}$, it follows that $P$
lies entirely in $\Gamma_{d}'$.

Now, for any edge $e$ in $\Gamma_{d}'$ that is not contained in
any cycle, the path $P$ contains $e$ at most once. Thus, some infinite
tail of the path $P$ does not contain $e$, and so there is a $T$-orbit
of some positive integer whose corresponding path does not contain
$e$. We can thus assume without loss of generality that $P$ does
not pass through $e$.

Using the same argument on each such edge $e$, we can assume that
$P$ lies on the subgraph $\Gamma_{d}''$ formed by deleting these
edges. Since $P$ is an infinite path, it must be contained in one
of the loops of $\Gamma_{d}''$. Thus $P$ is periodic. Its parity
vector is also periodic, determined by the color of the edges on the
loop, and so the $T$-orbit of $x$ is periodic, corresponding to
a nontrivial cycle with period equal to the length of the loop. But
by our assumptions, the length of the loop is less than $630,\!138,\!897$,
and it is not the cycle $\overline{1,2}$. But there are no such positive
integer cycles (see \cite{Sinisalo}), and so we have a contradiction.

For strong sufficiency in the backward direction, the same argument
can be applied to the graph formed by reversing the arrows in $\widetilde{\Gamma}_{d}$,
and hence in $\Gamma_{d}'$. \end{proof}
\begin{rem*}
If we remove the bound $630,\!138,\!897$ on the length of the loops,
the criterion shows that the set is forward and backward sufficient,
but not necessarily cycle sufficient.
\end{rem*}
Using Proposition \ref{strongsufficiency}, we have obtained the list
of strongly sufficient sets shown in Table \ref{loopsvalues}.

\afterpage{\clearpage{}}

\begin{table}
\begin{centering}
\begin{tabular}{rrrrrr}
\toprule 
\multicolumn{6}{c}{\textbf{Strongly sufficient sets}}\tabularnewline
\midrule 
$0\bmod2$  & $1,4\bmod9$  & $1,2,6\bmod7$  & $3,4,7\bmod10$  & $2,7,8\bmod11$  & $4,5,12\bmod14$ \tabularnewline
\midrule 
$1\bmod2$  & $1,8\bmod9$  & $0,1,3\bmod8$  & $3,6,7\bmod10$  & $3,4,5\bmod11$  & $4,6,11\bmod14$ \tabularnewline
\midrule 
$1\bmod3$  & $4,5\bmod9$  & $0,1,6\bmod8$  & $3,7,8\bmod10$  & $3,4,8\bmod11$  & $4,11,12\bmod14$ \tabularnewline
\midrule 
$2\bmod3$  & $4,7\bmod9$  & $2,4,7\bmod8$  & $4,5,7\bmod10$  & $3,4,9\bmod11$  & $6,7,8\bmod14$ \tabularnewline
\midrule 
$1\bmod4$  & $5,8\bmod9$  & $2,5,7\bmod8$  & $5,6,7\bmod10$  & $3,4,10\bmod11$  & $6,8,9\bmod14$ \tabularnewline
\midrule 
$2\bmod4$  & $7,8\bmod9$  & $0,1,4\bmod10$  & $5,7,8\bmod10$  & $3,6,10\bmod11$  & $7,8,12\bmod14$ \tabularnewline
\midrule 
$2\bmod6$  & $4,7\bmod11$  & $0,1,6\bmod10$  & $0,1,5\bmod11$  & $1,7,10\bmod12$  & $8,9,12\bmod14$ \tabularnewline
\midrule 
$2\bmod9$  & $5,6\bmod11$  & $0,1,8\bmod10$  & $0,1,8\bmod11$  & $1,8,11\bmod12$  & $1,5,7\bmod15$ \tabularnewline
\midrule 
$0,3\bmod4$  & $6,8\bmod11$  & $0,2,4\bmod10$  & $0,1,9\bmod11$  & $2,4,11\bmod12$  & $1,5,11\bmod15$ \tabularnewline
\midrule 
$0,1\bmod5$  & $6,9\bmod11$  & $0,2,6\bmod10$  & $0,2,5\bmod11$  & $4,7,10\bmod12$  & $1,5,13\bmod15$ \tabularnewline
\midrule 
$0,2\bmod5$  & $1,5\bmod12$  & $0,2,7\bmod10$  & $0,2,8\bmod11$  & $1,3,4\bmod13$  & $1,5,14\bmod15$ \tabularnewline
\midrule 
$1,3\bmod5$  & $2,5\bmod12$  & $0,2,8\bmod10$  & $0,4,5\bmod11$  & $1,4,6\bmod13$  & $1,7,8\bmod15$ \tabularnewline
\midrule 
$2,3\bmod5$  & $2,8\bmod12$  & $0,4,7\bmod10$  & $0,4,8\bmod11$  & $1,8,11\bmod13$  & $1,8,13\bmod15$ \tabularnewline
\midrule 
$1,4\bmod6$  & $2,10\bmod12$  & $0,6,7\bmod10$  & $0,4,9\bmod11$  & $2,3,7\bmod13$  & $1,8,14\bmod15$ \tabularnewline
\midrule 
$1,5\bmod6$  & $4,5\bmod12$  & $0,7,8\bmod10$  & $1,2,7\bmod11$  & $2,6,7\bmod13$  & $1,10,11\bmod15$ \tabularnewline
\midrule 
$4,5\bmod6$  & $5,8\bmod12$  & $1,3,4\bmod10$  & $1,3,5\bmod11$  & $3,4,9\bmod13$  & $1,10,13\bmod15$ \tabularnewline
\midrule 
$2,3\bmod7$  & $7,8\bmod12$  & $1,3,6\bmod10$  & $1,3,8\bmod11$  & $3,4,10\bmod13$  & $2,5,7\bmod15$ \tabularnewline
\midrule 
$2,5\bmod7$  & $8,11\bmod15$  & $1,3,8\bmod10$  & $1,3,9\bmod11$  & $3,7,10\bmod13$  & $2,5,11\bmod15$ \tabularnewline
\midrule 
$3,4\bmod7$  & $1,8\bmod18$  & $1,4,5\bmod10$  & $1,3,10\bmod11$  & $3,10,11\bmod13$  & $2,5,13\bmod15$ \tabularnewline
\midrule 
$4,5\bmod7$  & $2,8\bmod18$  & $1,5,6\bmod10$  & $1,5,7\bmod11$  & $4,6,9\bmod13$  & $2,5,14\bmod15$ \tabularnewline
\midrule 
$4,6\bmod7$  & $2,11\bmod18$  & $1,5,8\bmod10$  & $1,7,8\bmod11$  & $4,6,10\bmod13$  & $2,7,8\bmod15$ \tabularnewline
\midrule 
$1,4\bmod8$  & $7,8\bmod18$  & $2,3,4\bmod10$  & $1,7,9\bmod11$  & $4,8,9\bmod13$  & $2,7,10\bmod15$ \tabularnewline
\midrule 
$1,5\bmod8$  & $8,10\bmod18$  & $2,3,6\bmod10$  & $2,3,5\bmod11$  & $6,7,10\bmod13$  & $2,8,13\bmod15$ \tabularnewline
\midrule 
$2,3\bmod8$  & $8,14\bmod18$  & $2,3,7\bmod10$  & $2,3,7\bmod11$  & $6,10,11\bmod13$  & $2,8,14\bmod15$ \tabularnewline
\midrule 
$2,6\bmod8$  & $10,11\bmod18$  & $2,3,8\bmod10$  & $2,3,8\bmod11$  & $7,8,9\bmod13$  & $2,10,11\bmod15$ \tabularnewline
\midrule 
$3,4\bmod8$  & $5,11\bmod21$  & $2,4,5\bmod10$  & $2,3,9\bmod11$  & $8,9,11\bmod13$  & $2,10,13\bmod15$ \tabularnewline
\midrule 
$3,5\bmod8$  & $0,1,3\bmod7$  & $2,5,6\bmod10$  & $2,3,10\bmod11$  & $8,10,11\bmod13$  & $2,10,14\bmod15$ \tabularnewline
\midrule 
$4,6\bmod8$  & $0,1,5\bmod7$  & $2,5,7\bmod10$  & $2,5,7\bmod11$  & $3,4,10\bmod14$  & $4,5,11\bmod15$ \tabularnewline
\midrule 
$5,6\bmod8$  & $0,1,6\bmod7$  & $2,5,8\bmod10$  & $2,6,7\bmod11$  & $4,5,6\bmod14$  & $4,10,11\bmod15$ \tabularnewline
\bottomrule
\end{tabular}
\par\end{centering}

\caption{\label{loopsvalues} Some strongly sufficient sets. Each entry reveals
a new property of the divergent $T$-orbits and nontrivial cycles.
For instance, every divergent $T$-orbit, nontrivial cycle, and aperiodic
infinite back tracing sequence in $\widetilde{\mathcal{G}}$ contains
an element congruent to either $5$ or $11$ mod $21$.}
\end{table}

\subsection{Forward, backward, and cycle sufficiency}

The sets in Table \ref{loopsvalues} are all strongly sufficient.
In this section, we use more powerful tools to obtain sets that are
not necessarily strongly sufficient, but are strongly sufficient in
the forward or backward direction or cycle sufficient.

We require some known results on the limiting percentage of odd numbers
in a $T$-orbit. In \cite{Eliahou}, Eliahou showed that if a $T$-cycle
of positive integers of length $n$ contains $r$ odd positive integers
(and $n-r$ even positive integers), and has minimal element $m$
and maximal element $M$, then 
\[
\frac{\ln(2)}{\ln\left(3+\frac{1}{m}\right)}\le\frac{r}{n}\le\frac{\ln(2)}{\ln\left(3+\frac{1}{M}\right)}.
\]

In \cite{Lagarias}, Lagarias showed a similar result for divergent
orbits: the percentage of odd numbers in any divergent orbit is \textit{at
least} $\ln(2)/\ln(3)\approx.6309$.

We also require a similar bound for infinite back tracing sequences.
Let $x$ be a positive integer relatively prime to $3$ and let $x=x_{0},x_{1},x_{2},\ldots$
be an infinite back tracing sequence in $\widetilde{\mathcal{I}}_{x}$.
Suppose further that the sequence is not periodic. Then by Proposition
\ref{NotEventuallyPeriodic}, its back tracing parity vector is either
irrational or has only finitely many $1$'s.

In the case that the parity vector is irrational, note that every
positive integer occurs at most a finite number of times in the sequence
(otherwise, the sequence must be a cycle containing that integer).
In particular, there is some $N$ such that for all $n>N$, $x_{n}>x$.
Now, consider the function $f$ defined by the composition of the
first $n$ applications of $T_{0}^{-1}$ or $T_{1}^{-1}$ in this
back tracing sequence. Then since $T_{1}^{-1}(y)\le2y/3$ for any
positive integer $y$, we have that $f(x)\le\left(\frac{2}{3}\right)^{r}\cdot2^{n-r}x$
where $r$ is the number of $1$'s among the first $n$ digits of
the back tracing parity vector. It follows that 
\[
x<x_{n}=f(x)=\left(\frac{2}{3}\right)^{r}\cdot2^{n-r}x
\]
 and therefore 
\[
1<\left(\frac{2}{3}\right)^{r}\cdot2^{n-r}.
\]
 Taking the natural log of both sides and solving for $r/n$, we obtain
\[
\frac{r}{n}\le\frac{\ln2}{\ln3}.
\]

In the case that the parity vector has only finitely many $1$'s,
there is clearly an $N$ for which the same inequality holds for all
$n>N$. Thus, the percentage of odd numbers in any aperiodic infinite
back tracing sequence is \textit{at most} $\ln(2)/\ln(3)\approx.6309$.

We summarize these results in the following proposition.
\begin{prop}
Let $\rho=\ln(2)/\ln(3)\approx.6309$.\label{percentage}
\begin{itemize}
\item [\emph{(a)}]The percentage of odd numbers in any divergent orbit
is at least $\rho$. 
\item [\emph{(b)}]The percentage of odd numbers in any nontrivial cycle
with minimal element $m$ and maximal element $M$ is bounded below
by $\frac{\ln(2)}{\ln\left(3+\frac{1}{m}\right)}$ and above by $\frac{\ln(2)}{\ln\left(3+\frac{1}{M}\right)}$. 
\item [\emph{(c)}]The percentage of odd numbers in any aperiodic infinite
back-tracing sequence is at most $\rho$. 
\end{itemize}
\end{prop}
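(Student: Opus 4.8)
The plan is to recognize that parts (a) and (b) are exactly the cited theorems of Lagarias and Eliahou, so no independent argument is needed for them beyond quoting the bounds already recalled in the paragraphs preceding the statement. The genuine content of the proposition is part (c), and I would prove it using the multiplicative size estimate assembled just above the statement, combined with Theorem \ref{NotEventuallyPeriodic} to control the aperiodic case.

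For part (c), I would fix an aperiodic infinite back-tracing sequence $x = x_0, x_1, x_2, \ldots$ with each $x_i$ relatively prime to $3$ and $T(x_{i+1}) = x_i$. The first step is to bound growth: since the two inverse maps satisfy $T_0^{-1}(y) = 2y$ and $T_1^{-1}(y) = (2y-1)/3 < 2y/3$, the composition $f$ of the first $n$ inverse maps obeys $x_n = f(x_0) \le (2/3)^r 2^{n-r} x_0 = 2^n 3^{-r} x_0$, where $r$ is the number of $1$'s among the first $n$ digits of the back-tracing parity vector.

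The second step is to convert this into a bound on $r/n$, which requires the terms to eventually exceed $x_0$. Here I would invoke Theorem \ref{NotEventuallyPeriodic}: aperiodicity forces the parity vector to be either irrational or to have only finitely many $1$'s. In the irrational case I would argue that no integer recurs infinitely often, since a recurring value would produce a $T$-cycle and hence a periodic parity vector, contradicting aperiodicity; therefore $x_n \to \infty$ and there is an $N$ with $x_n > x_0$ for all $n > N$. Substituting into the size bound gives $1 < 2^n 3^{-r}$, hence $3^r < 2^n$ and $r/n < \ln 2/\ln 3 = \rho$. In the finitely-many-$1$'s case, $r$ is eventually constant, so $r/n \to 0 \le \rho$ trivially. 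Taking the limit yields the claimed bound on the percentage of odd terms.

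The main obstacle is the logical bookkeeping in part (c) rather than any delicate estimate: one must carefully justify that the aperiodic hypothesis, via Theorem \ref{NotEventuallyPeriodic}, genuinely forces $x_n \to \infty$ in the irrational case, because it is this unboundedness — not merely the trivial bound $x_n \ge 1$ — that licenses the inequality $x_0 < x_n$ for all large $n$. Once that point is secured, the collapse of $(2/3)^r 2^{n-r}$ to $2^n 3^{-r}$ and the passage to logarithms are entirely routine.
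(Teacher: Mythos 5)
Your proposal matches the paper's treatment essentially exactly: parts (a) and (b) are quoted from Lagarias and Eliahou, and part (c) is proved via the bound $x_n \le (2/3)^r 2^{n-r} x_0$, the observation that in the irrational case no value can recur infinitely often (else one obtains a cycle), hence $x_n > x_0$ for all large $n$, and the same logarithmic manipulation; the finitely-many-ones case is dispatched identically. No gaps.
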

Using this as a tool, we obtain the following result.
\begin{thm}
The arithmetic sequence $\{20\bmod27\}$ is forward sufficient and
cycle sufficient.\end{thm}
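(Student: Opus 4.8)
The plan is to analyze the finite digraph $\widetilde{\Gamma}_{27}$ and track where a forward orbit or nontrivial cycle that avoids the residue $20$ can live. First I would write $\widetilde{\Gamma}_{27}$ down explicitly using Proposition \ref{GammaStructure}: since $2$ is a primitive root mod $27$, part (d) says the black arrows form a single $18$-cycle through the residues prime to $3$, and part (e) says the red arrows form a tree rooted at $26$ together with the red self-loop $26\to 26$. Deleting the node $20$ and its incident edges to form $\Gamma_{27}'$ severs the black $18$-cycle into a path and removes the red edges into and out of $20$. A direct computation of the strongly connected components then shows that $\Gamma_{27}'$ has exactly one nontrivial strongly connected component $S$ (on the residues $1,2,4,7,8,11,14,17,19,22,23,25$), the isolated red self-loop at $26$, and transient vertices, with condensation the chain $S\to\{26\}\to\{13\}$ in which $13$ is a sink. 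Consequently any \emph{infinite} forward path avoiding $20$ is eventually confined either entirely to $S$ or entirely to the self-loop at $26$.

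The heart of the argument is a density estimate inside $S$. I would show that every directed cycle in $S$ uses at most as many red edges as black edges: if a cycle has $p$ edges of which $r$ are red, then $r/p\le \frac12$ (equivalently $3^{r}\le 2^{p}$). Because $S$ has only twelve vertices this is a finite check, but to carry it out cleanly I would run a maximum-mean-cycle / potential argument: assign each black edge weight $\ln 2$ and each red edge weight $\ln 2-\ln 3$, contract the forced runs of black edges issuing from the out-degree-one vertices, and verify on the contracted graph that no directed cycle has positive total red-excess at the threshold $\frac12$. This is the step I expect to be the main obstacle, since one must rule out every clever combination of the few red edges (which all terminate at $2$, $8$, $11$, or $17$) producing red-density above $\frac12$; the trivial cycle $1\to2\to1$ attains exactly $\frac12$, and everything else should fall strictly below.

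Granting the bound $r/p\le\frac12<\rho$, forward sufficiency follows. A divergent orbit avoiding $20$ is, after a finite transient, confined to $S$ or to the self-loop at $26$. In the first case the weights above furnish a potential on $S$ (no cycle is expanding, as $3^{r}<2^{p}$ for every cycle), so the orbit values stay bounded and the orbit cannot diverge. In the second case the orbit is eventually all odd; but a routine induction mod $2^{k}$ shows the only $2$-adic integer with an all-odd forward orbit is $-1$ (cf.\ the uniqueness in Theorem \ref{unique}), so no positive-integer orbit can be eventually all odd. Either way we contradict divergence (and Proposition \ref{percentage}(a)), so every divergent orbit meets $20\bmod 27$.

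For cycle sufficiency I would invoke the cycle bound of \cite{Eliahou} recorded in Proposition \ref{percentage}(b). A nontrivial cycle avoiding $20$ is periodic, hence traces a cycle of $\Gamma_{27}''$ lying in $S$ or on the self-loop at $26$. Such a cycle has minimal element $m\ge 2$, so its proportion of odd elements is at least $\frac{\ln 2}{\ln(3+1/m)}\ge\frac{\ln 2}{\ln(7/2)}>\frac12$, incompatible with the bound $r/p\le\frac12$ for cycles in $S$; and a cycle on the self-loop at $26$ would be all odd, contradicting the upper bound $\frac{\ln 2}{\ln(3+1/M)}<1$. Hence every nontrivial cycle also meets $20\bmod 27$, which completes the proof. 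I would remark that this method deliberately does not yield backward sufficiency: reversing all arrows leaves the density bound $r/p\le\frac12$ intact, but by Proposition \ref{percentage}(c) an aperiodic back tracing sequence need only have odd-density \emph{at most} $\rho$, which is consistent with remaining in $S$ forever.
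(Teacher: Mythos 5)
Your proposal is correct and is, at its core, the same argument as the paper's: pass to $\widetilde{\Gamma}_{27}$, delete the node $20$, confine an offending orbit to the recurrent part of what remains, bound the density of red arrows there by $\tfrac12$, and play this against the lower bounds of Proposition \ref{percentage}. The differences are in execution, and two are worth recording. First, the finite verification you flag as ``the main obstacle'' is much easier than a maximum-mean-cycle computation: the paper observes that in the pruned recurrent graph every maximal run of red arrows has length at most $2$, and every run of two consecutive red arrows ends at the node $17$, from which the two black arrows $17\to 22\to 11$ are forced; this gives red-density at most $\tfrac12$ for every infinite path (hence for every closed walk) with no enumeration of cycles at all. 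Your per-cycle bound, with equality only for $1\to 2\to 1$, is the correct target and your method would verify it, but the obstacle you anticipate is illusory. Second, your treatment of the red self-loop at $26$ is a genuine improvement in rigor: an infinite path avoiding $20$ could in principle remain at $26$ forever, and you correctly exclude this by noting that an eventually-all-odd forward orbit forces the value to be $-1$ in $\mathbb{Z}_{2}$; the paper asserts without further justification that $26$ occurs at most once in the path, which is only true once this case is ruled out. Finally, your auxiliary potential/boundedness argument for forward sufficiency is dispensable and would need care with the additive constants in $T_{1}$; the appeal to Proposition \ref{percentage}(a), which you also make, is the clean route and is what the paper uses.
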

\begin{proof}
Consider the graph $\widetilde{\Gamma}_{27}$, drawn in Figure \ref{27}.

\begin{figure}[h]
\begin{centering}
\includegraphics{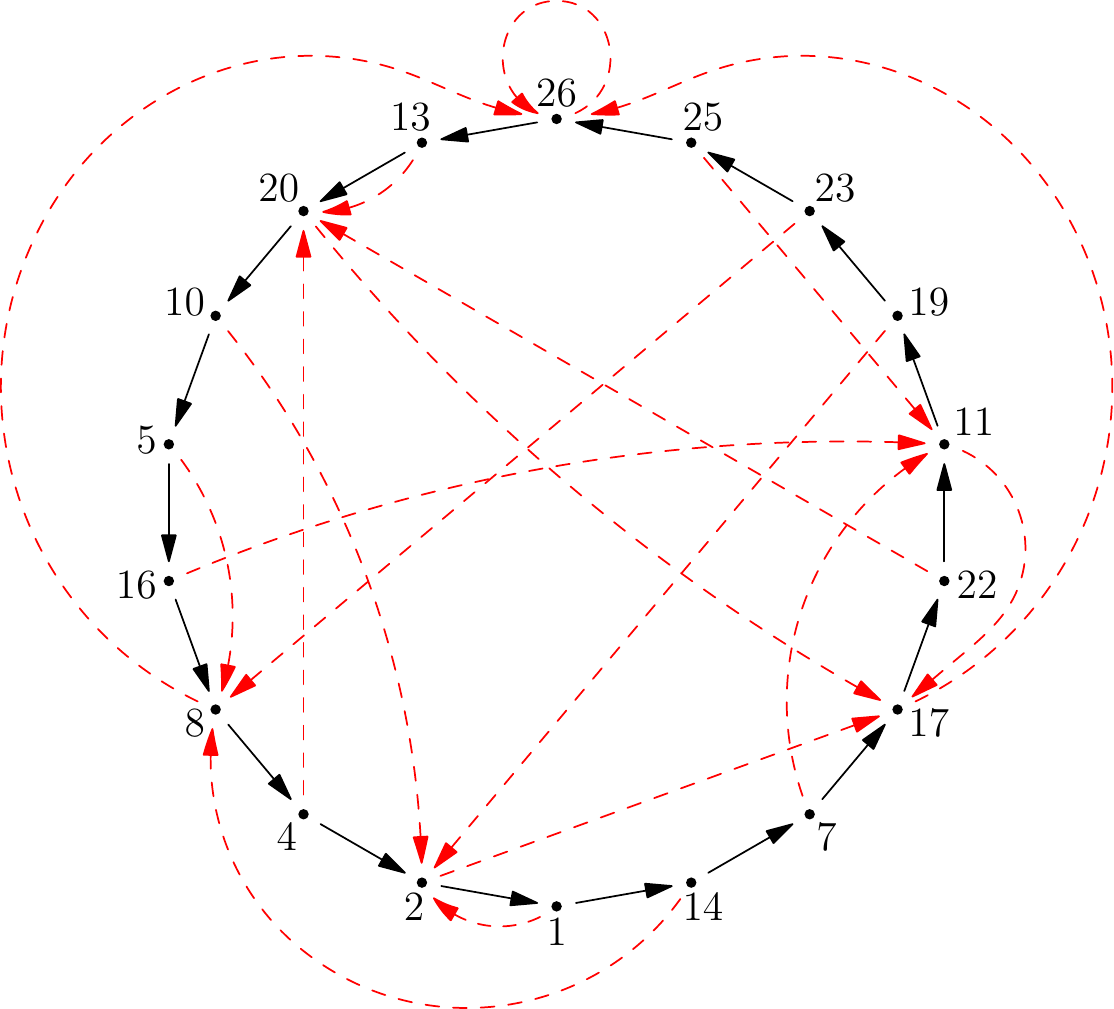} 
\par\end{centering}

\caption{\label{27} The action of $T_{0}$ and $T_{1}$ on the residues mod
$27$ relatively prime to $3$.}
\end{figure}

Now, suppose for contradiction that there is a nontrivial $T$-cycle
or divergent $T$-orbit of positive integers which does not contain
an integer congruent to $20$ mod $27$. Consider the path $P$ on
$\Gamma_{27}$ formed by taking this $T$-orbit mod $27$, starting
at the first element which is not divisible by $3$. Consider the
subgraph $\Gamma_{27}'$ of $\widetilde{\Gamma}_{27}$ formed by deleting
the node $20$ and all its adjacent edges. Since the path $P$ does
not contain the node $20$ by assumption, we see that $P$ lies entirely
within $\Gamma_{27}'$.

Notice that in $\Gamma_{27}'$, the node $10$ has no arrows coming
into it, so it cannot occur more than once in the path $P$. Similarly,
the nodes $5$, $16$, $13$, and $26$ cannot occur more than once
in $P$. Thus, some infinite tail $P'$ of the path $P$ must lie
in the subgraph $\Gamma_{27}''$ shown in Figure \ref{27no20simplified}.

\begin{figure}[h]
\begin{centering}
\includegraphics{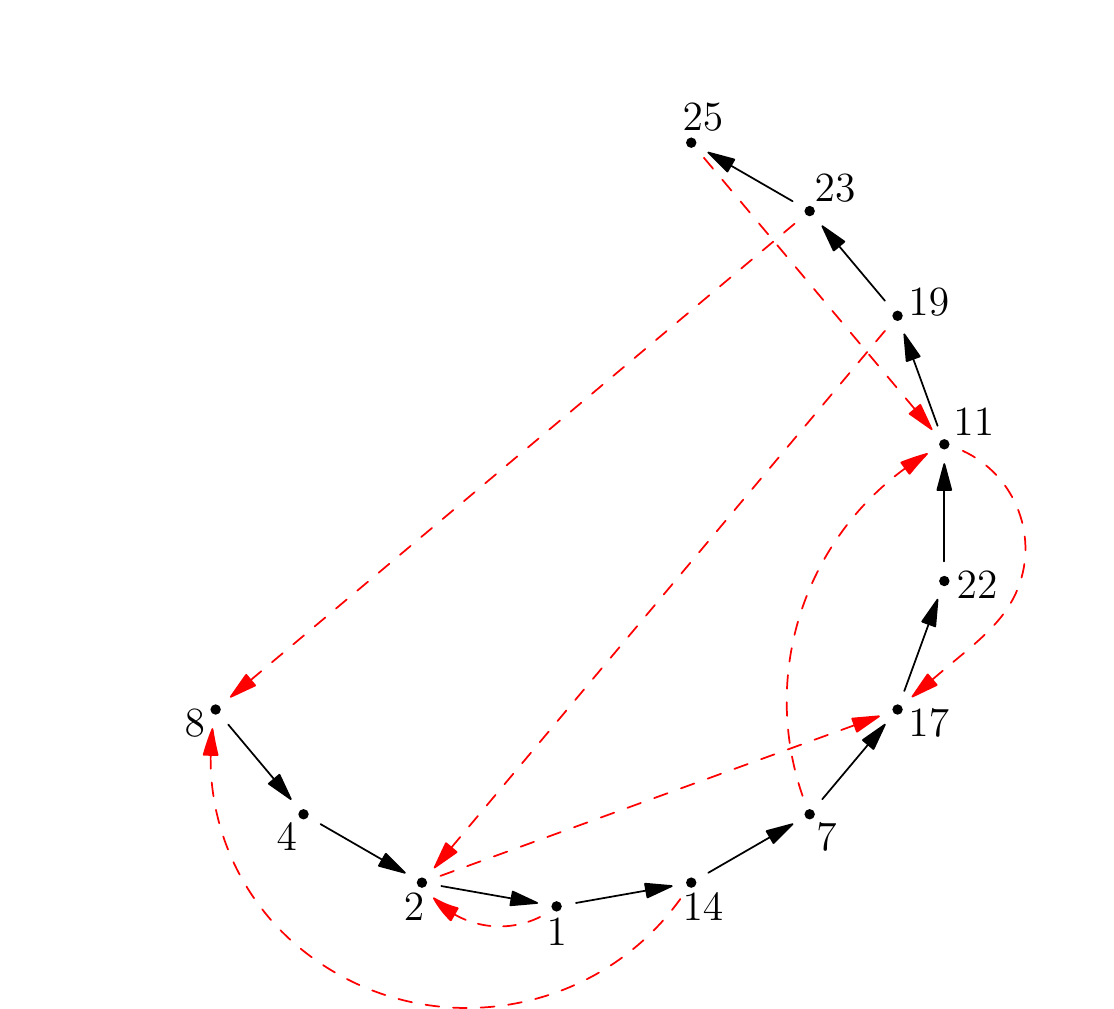} 
\par\end{centering}

\caption{\label{27no20simplified} The graph $\Gamma_{27}''$.}
\end{figure}

Note that by Proposition \ref{percentage}, any nontrivial cycle has
$m\ge3$ and hence its percentage of odd elements is at least $\ln(2)/\ln(3+1/3)\approx0.576$,
and the percentage of odd elements in any divergent $T$-orbit is
at least $0.6309$. We show that the fraction of red arrows followed
by any infinite path in $\Gamma_{27}''$ is at most $0.5$, hence
obtaining a contradiction.

We first note that any consecutive path of red arrows in $\Gamma_{27}''$
has length at most $2$. Moreover, any path of $2$ consecutive red
arrows (either from $19$ to $2$ to $17$ or from $1$ to $2$ to
$17$) must be followed by at least $2$ consecutive black arrows
(from $17$ to $22$ to $11$). It follows that the path $P'$ has
at most $50$ percent red arrows, as desired. 
\end{proof}
The key observation in the proof above is that the graph $\Gamma_{27}''$
essentially has too many black edges. We can use similar methods to
obtain simple graph-theoretic criteria for strong sufficiency in the
forward and backward directions and for cycle sufficiency.
\begin{defn*}
A \textit{simple cycle} in a directed graph is a directed path $A_{1},\ldots,A_{k}$
of nodes for which $A_{i}=A_{j}$ if and only if $\{i,j\}=\{1,k\}$.\end{defn*}
\begin{prop}
\label{pretzelcycles} Let $d\in\mathbb{N}$, and let $a_{1},\ldots,a_{k}$
be $k$ distinct residues mod $d$. Let $\Gamma_{d}'$ be the subgraph
of $\widetilde{\Gamma}_{d}$ formed by deleting the nodes labeled
$a_{1},\ldots,a_{k}$ and all arrows connected to them, and let $\Gamma_{d}''$
be the graph formed from $\Gamma_{d}'$ by deleting any edge which
is not contained in any cycle of $\Gamma_{d}'$. 
\begin{itemize}
\item [\emph{(a)}]If the fraction of red arrows in every simple cycle
of $\Gamma_{d}''$ is less than $\ln(2)/\ln(3)$, then $\{a_{1},\ldots,a_{k}\bmod d\}$
is forward sufficient. 
\item [\emph{(b)}]If the fraction of red arrows in every simple cycle
of $\Gamma_{d}''$ is greater than $\ln(2)/\ln(3)$, then $a_{1},\ldots,a_{k}\bmod d$
is backward sufficient. 
\item [\emph{(c)}]If the fractions of red arrows in the simple cycles
of each connected component $H$ of $\Gamma_{d}''$ are either all
greater than $\ln(2)/\ln(3)$ or all less than $\ln(2)/\ln(3+1/m)$
where $m=2^{60}$, then $a_{1},\ldots,a_{k}\bmod d$ is cycle sufficient. 
\end{itemize}
\end{prop}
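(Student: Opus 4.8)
The plan is to funnel all three parts through one combinatorial principle about red/black colored walks in the finite digraph $\Gamma_d''$: for any \emph{infinite} walk the limiting fraction of red edges is trapped between the smallest and largest red-fractions occurring among the simple cycles, while for a \emph{closed} (periodic) walk the red-fraction is \emph{exactly} a convex combination of the simple-cycle red-fractions. I would establish this first. Given a walk, view its first $n$ edges as an integer edge-multiset; since in-degree equals out-degree at every node except the two endpoints, flow decomposition writes this multiset as a nonnegative combination of simple cycles plus one leftover path of length at most $|V(\Gamma_d'')|$. Dividing by $n$, the path contributes $o(1)$, so the red-fraction of the first $n$ edges differs by $o(1)$ from a weighted average of the red-fractions $r(C)$ of the simple cycles $C$ that appear; hence $\liminf_n(\text{red fraction}) \ge \min_C r(C)$ and $\limsup_n(\text{red fraction}) \le \max_C r(C)$. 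For a closed walk the leftover path is empty, so its red-fraction is a genuine convex combination of the $r(C)$, and all these cycles lie in the single connected component of $\Gamma_d''$ containing the walk.

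Next I would record the translation between orbits and walks. A forward red edge leaves an odd node, so along a forward $T$-orbit the fraction of red edges equals the fraction of odd terms; along a back tracing sequence a red ($T_1^{-1}$) step records a $1$ in the parity vector, so the red-fraction equals the fraction of $1$'s, which by the discussion preceding Proposition \ref{percentage} is again the fraction of odd terms. For part (a): if a divergent orbit avoided $\{a_1,\dots,a_k \bmod d\}$, its reduction mod $d$ (taken past the last multiple of $3$) is an infinite walk in $\Gamma_d'$, and each edge of $\Gamma_d'$ not on a cycle can be traversed at most once, so an infinite tail lies in $\Gamma_d''$. By Proposition \ref{percentage}(a) its red-fraction is at least $\rho$, yet by the principle it is at most $\max_C r(C) < \rho$ (the strict bound surviving the finite max), a contradiction. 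Part (b) is the same after reversing every arrow of $\widetilde{\Gamma}_d$: reversal fixes each edge's color and sends cycles to cycles, so $\Gamma_d''$ and the hypothesis are unchanged, an aperiodic back tracing sequence becomes an infinite forward walk, and Proposition \ref{percentage}(c) caps its red-fraction at $\rho$, contradicting $\min_C r(C) > \rho$.

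For part (c) I would invoke the two-sided bound of Proposition \ref{percentage}(b). A nontrivial cycle avoiding the set reduces to a closed walk lying in one component $H$ of $\Gamma_d''$, so its red-fraction $f$ is a convex combination of the simple-cycle red-fractions of $H$. If all of those exceed $\rho$, then $f > \rho$, contradicting the upper bound $f \le \ln(2)/\ln(3+1/M) < \rho$. If instead all of them are below $\ln(2)/\ln(3+1/m)$ with $m = 2^{60}$, then $f < \ln(2)/\ln(3+2^{-60})$; but computational verification forces the cycle's minimal element to exceed $2^{60}$, so the lower bound gives $f \ge \ln(2)/\ln(3+2^{-60})$, again a contradiction. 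Either regime is excluded, so the set is cycle sufficient.

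I expect the main obstacle to be the combinatorial principle of the first paragraph, and specifically keeping the strict-versus-nonstrict inequalities honest: finiteness of $\Gamma_d''$ is exactly what upgrades ``every simple cycle has red-fraction $< \rho$'' to ``$\max_C r(C) < \rho$,'' and the entire purpose of the constant $m = 2^{60}$ in (c) is to sharpen the otherwise cycle-dependent lower bound of Proposition \ref{percentage}(b) into a fixed threshold that strictly separates the two permitted per-component regimes. One further care point is verifying that the simple cycles produced by the flow decomposition of a closed walk all lie in the same component $H$, since that is precisely what lets the per-component hypothesis of (c) be applied.
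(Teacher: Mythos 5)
Your proposal is correct, and it reaches the same key inequality as the paper --- that the limiting red-arrow fraction of an infinite walk in $\Gamma_d''$ is trapped between the minimum and maximum red-fractions of the simple cycles, with exact equality to a convex combination for closed walks --- but by a genuinely different argument. The paper proves this by strong induction on the ``complexity'' of the block of the walk between two consecutive visits to a recurrent node $A$: it repeatedly splits off an inner sub-cycle and recombines using the mediant (Farey sum) inequality, with simple cycles as the base case. You instead apply the standard flow/cycle decomposition to the edge multiset of the first $n$ edges: since in-degree equals out-degree except at the two endpoints, the multiset is a union of simple cycles plus one leftover path of length $O(|V|)$, whose contribution vanishes after dividing by $n$. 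Your route buys three things: it delivers in one step the exact convex-combination statement for closed walks that part (c) needs; it makes explicit that finiteness of the set of simple cycles upgrades the pointwise hypothesis ``every simple cycle has red-fraction $<\rho$'' to the uniform bound $\max_C r(C)<\rho$ needed to contradict Proposition \ref{percentage} (a point the paper's induction leaves implicit, since bounding each block by $\rho$ strictly does not by itself bound the limit away from $\rho$); and it records cleanly that all cycles in the decomposition of a closed walk lie in a single connected component, which is exactly what the per-component hypothesis of (c) requires. The surrounding reductions --- passing to a tail of the orbit lying in $\Gamma_d''$ because non-cycle edges of $\Gamma_d'$ are traversed at most once, identifying red edges with odd terms, reversing all arrows to deduce (b) from Proposition \ref{percentage}(c), and using the verification of the conjecture below $2^{60}$ to separate the two regimes in (c) --- coincide with the paper's.
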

\begin{proof}
Let $\rho=\ln(2)/\ln(3)$. First, suppose the fraction of red arrows
in every simple cycle of $\Gamma_{d}''$ is less than $\rho$. We
show that every infinite path in $\Gamma_{d}''$ must also have its
limiting fraction of red arrows less than $\rho$, showing that $a_{1},\ldots,a_{k}\bmod d$
is forward sufficient. Let $P$ be an infinite path $A_{1},A_{2},\ldots$
of nodes in $\Gamma_{d}''$.

Since $P$ is infinite and $\Gamma_{d}''$ has a finite number of
nodes, some node must occur infinitely many times in $P$. Call this
node $A$. We show that the fraction of red arrows in the portion
of $P$ between any two consecutive occurrences of $A$ is less than
$\rho$. Let $A,B_{1},\ldots,B_{n},A$ be such a sub-path of $P$,
and call this sub-path $X$.

To show that the fraction of red arrows along the path $X$ must be
less than $\rho$, we induct on an invariant which we call the \textit{complexity}
of $X$. Define the \textit{complexity} of $X$ to be the number of
pairs of equal nodes in the sequence $A,B_{1},\ldots,B_{n},A$. For
instance, the complexity of the sequence $A,B,C,B,C,A$ is $3$, and
the complexity of the sequence $A,B,C,D,B,C,D,B,D,A$ is $8$.

For the base case, suppose $X$ has complexity $1$. Then all of $B_{1},\ldots,B_{n}$
are distinct, and so $X$ is a simple cycle. By our hypothesis, the
fraction of red arrows in $X$ is less than $\rho$.

Let $n\ge1$, and assume for strong induction that if $X$ has complexity
at most $n$ then the fraction of red arrows in the path $X$ is less
than $\rho$. Suppose $X$ has complexity $n+1$. Choose a node $B$
other than $A$ which occurs twice in $X$. Then we can write $X=u,B,v,B,w$
for some sequences of nodes $u$, $v$, and $w$.

Now, notice that the complexity of the sub-path $B,v,B$ of $X$ is
strictly less than that of $X$, since it does not contain the two
copies of $A$ on each end. Letting $a$ be the number of red arrows
along this path and $b$ the total number of arrows, we have that
$a/b<\rho$ by the induction hypothesis.

Let $X'$ be the cyclic path formed by deleting this cycle from $X$
to form the sequence of nodes $u,B,w$. Then the complexity of $X'$
is also less than that of $X$, so if $c$ is the number of red arrows
along $X'$ and $e$ is the total number of arrows, we have that $c/e<\rho$
by the induction hypothesis.

Finally, we have that $(a+c)/(b+e)$ is the fraction of red arrows
in the entire path $X$. It is well-known that this \textit{Farey
sum}, also known as the \textit{mediant} of the fractions $a/b$ and
$c/e$, must lie between $a/b$ and $c/e$. Hence it must also be
less than $\rho$. This completes the induction, proving the first
claim.

The second claim is analogous. For the third claim, note that the
$3x+1$ conjecture has now been verified for the positive integers
less than $2^{60}$, so any nontrivial cycle must have its minimal
element $m$ and maximal element $M$ both greater than $2^{60}$.
Furthermore, any infinite periodic path lying in $\Gamma_{d}''$ must
lie entirely in one of the connected components of $\Gamma_{d}''$.

Assume that for all connected components $H$ of $\Gamma_{d}''$,
the fractions of red arrows in the simple cycles of $H$ are either
all greater than $\ln(2)/\ln(3)$ or all less than $\ln(2)/\ln(3+1/m)$
where $m=2^{60}$. Suppose to the contrary that there is an infinite
periodic path $P$ in $\Gamma_{d}''$, and let $H$ be the connected
component containing it. If the simple cycles in $H$ have fractions
of red arrows less than $\ln(2)/\ln(3+1/m)$, then by the above argument,
the fraction of red arrows in $P$ is also less than $\ln(2)/\ln(3+1/m)$,
contradicting Proposition \ref{percentage}. If instead the simple
cycles in $H$ have fractions of red arrows greater than $\ln(2)/\ln(3)$,
then the fraction of red arrows in $P$ is also greater than $\ln(2)/\ln(3)>\ln(2)/\ln(3+1/M)$,
again contradicting Proposition \ref{percentage}. This completes
the proof.
\end{proof}
Using Proposition \ref{pretzelcycles}, we have obtained, with the
use of a computer, several examples of forward sufficient, backward
sufficient, and cycle sufficient sets that do not appear in Table
\ref{loopsvalues}. We list these results in Tables \ref{blackvalues},
\ref{redvalues}, and \ref{bothvalues}.

\begin{table}
\begin{centering}
\begin{tabular}{rrrrr}
\toprule 
\multicolumn{5}{c}{\textbf{Forward sufficient sets}}\tabularnewline
\midrule 
$3\bmod4$  & $11,15\bmod16$  & $0,1,6\bmod13$  & $3,9,12\bmod14$  & $7,15,19\bmod20$ \tabularnewline
\midrule 
$5\bmod6$  & $4,13\bmod18$  & $0,2,3\bmod13$  & $3,9,13\bmod14$  & $9,11,15\bmod20$ \tabularnewline
\midrule 
$3\bmod8$  & $11,17\bmod18$  & $0,2,6\bmod13$  & $4,5,13\bmod14$  & $11,15,18\bmod20$ \tabularnewline
\midrule 
$6\bmod8$  & $13,17\bmod18$  & $0,3,9\bmod13$  & $4,11,13\bmod14$  & $11,15,19\bmod20$ \tabularnewline
\midrule 
$4\bmod9$  & $5,14\bmod21$  & $0,3,10\bmod13$  & $5,6,7\bmod14$  & $10,14,17\bmod21$ \tabularnewline
\midrule 
$8\bmod9$  & $5,17\bmod24$  & $0,6,9\bmod13$  & $5,6,9\bmod14$  & $13,14,17\bmod21$ \tabularnewline
\midrule 
$5\bmod12$  & $11,14\bmod24$  & $0,6,10\bmod13$  & $5,7,12\bmod14$  & $14,17,20\bmod21$ \tabularnewline
\midrule 
$8\bmod18$  & $11,17\bmod24$  & $0,8,9\bmod13$  & $5,7,13\bmod14$  & $3,10,17\bmod22$ \tabularnewline
\midrule 
$20\bmod27$  & $11,19\bmod24$  & $1,3,7\bmod13$  & $5,9,12\bmod14$  & $3,17,20\bmod22$ \tabularnewline
\midrule 
$0,3\bmod7$  & $14,20\bmod24$  & $1,3,11\bmod13$  & $5,9,13\bmod14$  & $3,17,21\bmod22$ \tabularnewline
\midrule 
$0,5\bmod7$  & $14,22\bmod24$  & $1,6,7\bmod13$  & $6,7,11\bmod14$  & $4,15,19\bmod22$ \tabularnewline
\midrule 
$1,7\bmod8$  & $14,23\bmod24$  & $1,6,11\bmod13$  & $6,9,11\bmod14$  & $5,16,17\bmod22$ \tabularnewline
\midrule 
$4,5\bmod11$  & $17,23\bmod24$  & $2,3,4\bmod13$  & $7,8,13\bmod14$  & $8,17,19\bmod22$ \tabularnewline
\midrule 
$4,8\bmod11$  & $10,17\bmod27$  & $2,3,11\bmod13$  & $7,11,12\bmod14$  & $12,13,19\bmod22$ \tabularnewline
\midrule 
$2,11\bmod12$  & $13,17\bmod27$  & $2,4,6\bmod13$  & $7,11,13\bmod14$  & $4,17,22\bmod24$ \tabularnewline
\midrule 
$7,10\bmod12$  & $13,22\bmod27$  & $2,6,11\bmod13$  & $8,9,13\bmod14$  & $7,17,20\bmod24$ \tabularnewline
\midrule 
$7,11\bmod12$  & $17,26\bmod27$  & $2,8,11\bmod13$  & $9,11,12\bmod14$  & $7,17,22\bmod24$ \tabularnewline
\midrule 
$5,11\bmod15$  & $22,26\bmod27$  & $3,7,9\bmod13$  & $9,11,13\bmod14$  & $7,19,20\bmod24$ \tabularnewline
\midrule 
$3,11\bmod16$  & $1,3,9\bmod10$  & $3,9,11\bmod13$  & $1,3,7\bmod16$  & $7,19,22\bmod24$ \tabularnewline
\midrule 
$6,7\bmod16$  & $1,5,9\bmod10$  & $6,7,9\bmod13$  & $1,3,9\bmod16$  & $7,19,23\bmod24$ \tabularnewline
\midrule 
$6,14\bmod16$  & $3,7,9\bmod10$  & $6,9,11\bmod13$  & $1,3,14\bmod16$  & $8,17,23\bmod27$ \tabularnewline
\midrule 
$7,9\bmod16$  & $5,7,9\bmod10$  & $3,6,7\bmod14$  & $2,7,12\bmod16$  & $8,17,25\bmod27$ \tabularnewline
\midrule 
$7,11\bmod16$  & $1,2,5\bmod11$  & $3,6,9\bmod14$  & $2,11,12\bmod16$  & $10,11,13\bmod27$ \tabularnewline
\midrule 
$9,12\bmod16$  & $1,2,8\bmod11$  & $3,7,10\bmod14$  & $2,12,14\bmod16$  & $10,11,26\bmod27$ \tabularnewline
\midrule 
$9,14\bmod16$  & $1,5,9\bmod11$  & $3,7,12\bmod14$  & $5,11,16\bmod18$  & \tabularnewline
\midrule 
$9,15\bmod16$  & $1,8,9\bmod11$  & $3,7,13\bmod14$  & $7,9,15\bmod20$  & \tabularnewline
\midrule 
$11,14\bmod16$  & $0,1,3\bmod13$  & $3,9,10\bmod14$  & $7,15,18\bmod20$  & \tabularnewline
\bottomrule
\end{tabular}
\par\end{centering}

\caption{\label{blackvalues} Some forward sufficient sets obtained using the
first criterion in Proposition \ref{pretzelcycles}.}
\end{table}

\begin{table}
\begin{centering}
\begin{tabular}{rrrr}
\toprule 
\multicolumn{4}{c}{\textbf{Backward sufficient sets }}\tabularnewline
\midrule 
$2,4\bmod8$  & $1,3,5\bmod16$  & $2,4,12\bmod16$  & $1,4,20\bmod24$ \tabularnewline
\midrule 
$2,5\bmod8$  & $1,3,8\bmod16$  & $2,5,12\bmod16$  & $1,5,13\bmod24$ \tabularnewline
\midrule 
$1,8\bmod12$  & $1,3,10\bmod16$  & $2,5,13\bmod16$  & $1,8,20\bmod24$ \tabularnewline
\midrule 
$2,4\bmod18$  & $1,4,12\bmod16$  & $2,8,12\bmod16$  & $2,4,20\bmod24$ \tabularnewline
\midrule 
$2,5\bmod24$  & $1,5,13\bmod16$  & $2,8,13\bmod16$  & $2,8,20\bmod24$ \tabularnewline
\midrule 
$1,4,10\bmod12$  & $1,8,13\bmod16$  & $2,10,12\bmod16$  & \tabularnewline
\midrule 
$1,3,4\bmod16$  & $2,3,10\bmod16$  & $1,4,10\bmod18$  & \tabularnewline
\bottomrule
\end{tabular}
\par\end{centering}

\caption{\label{redvalues} Some backward sufficient sets obtained using the
second criterion in Proposition \ref{pretzelcycles}.}
\end{table}

\begin{table}
\begin{centering}
{ %
\begin{tabular}{c}
\toprule {\textbf{Cycle sufficient sets}} \tabularnewline
\hline 
$1,3\bmod16$ \tabularnewline
\hline 
$2,12\bmod16$ \tabularnewline
\hline 
\end{tabular}} 
\par\end{centering}

\caption{\label{bothvalues} Some cycle sufficient sets obtained using the
third criterion in Proposition \ref{pretzelcycles}.}
\end{table}

\section{Self duality and folding in $\Gamma_{2^{n}}$}

\label{DualityFolding}

We now use properties of the $2$-adic dynamical system $T:\mathbb{Z}_{2}\to\mathbb{Z}_{2}$
to provide a better understanding of the graphs $\Gamma_{2^{n}}$.
We will use these insights to find more strongly sufficient sets from
the ones we have already found.

\subsection{Self color duality}

The graphs $\Gamma_{2^{n}}$ exhibit a surprising and beautiful self-duality.
\begin{defn*}
Let $\Gamma$ be any directed graph having each edge colored either
red or black. The \textit{color dual} of $\Gamma$ is the graph formed
by replacing all red edges with black edges and vice versa.
\end{defn*}

\begin{defn*}
A graph is \textit{self color dual} if it is isomorphic to its color
dual up to a relabeling of the vertices.
\end{defn*}
We give a complete classification of the self color dual graphs $\Gamma_{k}$.
\begin{thm}
\label{selfdual} The graph $\Gamma_{k}$ is self color dual if and
only if $k=2^{n}$ for some positive integer $n$.
\end{thm}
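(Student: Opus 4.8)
The plan is to prove the two implications separately. For the easy direction, that $k=2^n$ forces self color duality, I would exhibit the explicit relabeling: the map $\phi$ on $\mathbb{Z}/2^n\mathbb{Z}$ first defined in \cite{Dad} is the natural candidate, and the whole task is to check that it carries the black edge relation onto the red edge relation and vice versa. Using Proposition \ref{GammaStructure}(a), in $\Gamma_{2^n}$ each even node $r$ has exactly the two black out-neighbors $r/2$ and $r/2+2^{n-1}$, and each odd node $r$ has exactly the two red out-neighbors $(3r+1)/2$ and $(3r+1)/2+2^{n-1}$. Since these out-neighbor sets are cosets of $\{0,2^{n-1}\}$, it suffices to verify, for every node, the single congruence $\phi(T_0 x)\equiv T_1(\phi x)\pmod{2^{n-1}}$ (and its mirror) relating the halving and tripling rules mod $2^{n-1}$. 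This reduces the verification to a finite computation with the explicit form of $\phi$, together with the observation that $\phi$ must interchange even and odd residues (forced, since black edges leave even nodes and red edges leave odd nodes).

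For the converse I would rule out every $k$ that is not a power of two by producing a graph invariant preserved by any color-swapping isomorphism but distinguishing $\Gamma_k$ from its color dual. The cleanest case is $3\mid k$. By Proposition \ref{GammaStructure}(b) every node of $\Gamma_k$ has in-black-degree exactly $1$, while the nodes not congruent to $2\bmod 3$ (for instance the node $0$) have in-red-degree $0$. A color-swapping isomorphism must preserve the multiset of pairs (in-black-degree, in-red-degree) up to interchanging the two coordinates; but $\Gamma_k$ has a vertex of type $(1,0)$ and \emph{no} vertex of type $(0,1)$, so no such isomorphism can exist.

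For $k$ coprime to $6$ (so $k>1$ is automatically not a power of two), the maps $T_0,T_1$ are genuine permutations and $\Gamma_k$ is exactly the picture of the transitive group $G_k=\langle T_0,T_1\rangle\le \AGL(1,k)$. Self color duality then means there is $\sigma\in S_k$ with $\sigma T_0\sigma^{-1}=T_1$ and $\sigma T_1\sigma^{-1}=T_0$. Such a $\sigma$ normalizes $G_k$, and the plan is to show it must be \emph{affine}: by Theorem \ref{structure} the translation subgroup $C_k\cong\langle x\mapsto x+1\rangle$ is normal and cyclic of order $k$, and I would argue it is characteristic in $G_k$ (it is the kernel of the multiplier homomorphism $G_k\to(\mathbb{Z}/k)^*$, and conjugation in $S_k$ sends fixed-point-free elements to fixed-point-free elements), so that $\sigma$ normalizes $C_k$ and hence lies in the holomorph $N_{S_k}(C_k)=\AGL(1,k)$. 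Conjugation by an affine map preserves the multiplier (linear part), yet $T_0$ and $T_1$ have multipliers $1/2$ and $3/2$; equating them forces $1\equiv 3\pmod k$, i.e.\ $k\mid 2$, contradicting $k>1$.

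The remaining, and hardest, case is $k$ even with an odd part $b>1$ coprime to $3$ (for instance $k=22$). Here the out-degrees $(2,0)$ and $(0,2)$ split by parity and the in-degrees are uniformly $(1,1)$, so the degree invariants above are color-symmetric and give nothing; moreover there is no ambient permutation group to exploit. My plan is to pass to the monochromatic \emph{cores}: following the analysis behind Proposition \ref{GammaStructure}, the black cycles sit on the multiples of $2^n$ and realize multiplication by $2$ on $\mathbb{Z}/b$, while the red cycles sit on the residues $\equiv -1 \pmod{2^n}$ and realize multiplication by $3/2$ on $\mathbb{Z}/b$, with identical depth-$n$ binary trees hanging off each core node; a color-swapping isomorphism must carry one core onto the other, conjugating $\times 2$ to $\times(3/2)$. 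The main obstacle is precisely that the cycle-length multisets of $\times 2$ and $\times(3/2)$ on $\mathbb{Z}/b$ can coincide (for $b=11$ both $2$ and $3/2$ have order $10$), so the two cores are abstractly isomorphic. Ruling this out requires a finer simultaneous-conjugacy invariant of the \emph{pair} $(\times 2,\times 3/2)$---for example comparing the cycle types of short words such as $(\times 2)^2\circ(\times 3/2)$ and $(\times 3/2)^2\circ(\times 2)$, or re-running the affine/multiplier obstruction on the induced action---and this is the step I expect to be the crux of the theorem.
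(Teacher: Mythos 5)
Your forward direction (relabel by the autoconjugacy $\Omega$ of \cite{Dad}) and your $3\mid k$ case (the in-degree type $(1,0)$ at the node $0$ with no node of type $(0,1)$) are sound, and the latter is a clean argument. But the proof is incomplete exactly where you flag it: for $k=2^{n}b$ with $n\ge1$ and $b>1$ coprime to $6$ you give only a plan and explicitly defer the decisive step to ``a finer simultaneous-conjugacy invariant.'' That case cannot be waved away: as you yourself observe for $b=11$, the orders of $2$ and $3/2$ modulo $b$ can coincide, so the monochromatic cores are abstractly isomorphic and nothing you have written excludes a color-swapping isomorphism. A theorem whose hardest case ends with ``this is the step I expect to be the crux'' is not proved. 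There is also a secondary flaw in your coprime-to-$6$ case: the claim that $C_{k}$ is characteristic because the translations are the fixed-point-free elements of $G_{k}$ is false in general --- for $k=25$ the map $x\mapsto6x+1$ lies in $G_{25}$ (since $2$ is a primitive root mod $25$, so $6\in\langle2,3\rangle$) and is fixed-point-free without being a translation. This is repairable: conjugating the explicit word $T_{0}^{-2}T_{1}T_{0}T_{1}^{-1}T_{0}=P$ of equation (\ref{eq:x+1}) by your $\sigma$ gives the word $T_{1}^{-2}T_{0}T_{1}T_{0}^{-1}T_{1}$, which a direct computation in $\AGL(1,k)$ shows is the translation $x\mapsto x-1/9$, a generator of $C_{k}$; hence $\sigma$ normalizes $C_{k}$ after all, lies in the holomorph $\AGL(1,k)$, and your multiplier comparison $1/2\not\equiv3/2$ finishes that case.

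For comparison, the paper's converse is a single uniform argument covering every $k=2^{n}b$ with $b>1$ odd, including your missing case. Any color-swapping isomorphism $\rho$ must exchange the unique node with a black self-loop ($0$) and the unique node with a red self-loop ($-1$); one then computes that the set of nodes reachable from $-1$ by paths of length $n+2$ fills the residue classes $-1,-1/2,-1/4$ modulo $b$, while the corresponding set for $0$ fills $0,1/2,1/4,5/4$, so the two reachability counts are $3\cdot2^{n}$ versus $4\cdot2^{n}$ (or $2\cdot2^{n}$ versus $3\cdot2^{n}$ when $b=3$), contradicting the invariance of $|\hat{T}^{m}(z)|$ under $\rho$; the single exceptional modulus $b=5$, where the counts agree, is handled by a short structural argument about which arrows from $\hat{T}^{n+1}(0)\setminus\hat{T}^{n}(0)$ can point back into $\hat{T}^{n}(0)$. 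This reachability-count invariant is precisely the kind of ``finer invariant'' your last case is missing, and adopting it would both close your gap and make the group-theoretic machinery unnecessary.
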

To prove this, we require some terminology and background. Define
$\mathbb{Z}_{2}$ to be the ring of $2$-adic integers equipped with
the usual $2$-adic metric. The map $T$ can be extended to be defined
on $\mathbb{Z}_{2}$. Define the \textit{parity vector function} 
\[
\Phi^{-1}:\mathbb{Z}_{2}\to\mathbb{Z}_{2}
\]
to be the map sending $x$ to the $T$-orbit of $x$ taken mod $2$.
Bernstein \cite{Bernstein} shows that the inverse parity vector function
$\Phi$ is well-defined, that is, the parity vector of a $2$-adic
uniquely determines the $2$-adic. Moreover, Lagarias \cite{Lagarias}
shows that $T$ is conjugate to the binary shift map 
\[
\sigma:\mathbb{Z}_{2}\to\mathbb{Z}_{2},
\]
the map sending a $2$-adic binary expansion $a_{0}a_{1}a_{2}a_{3}\ldots$
to the shifted $2$-adic $a_{1}a_{2}a_{3}\ldots$, via the parity
vector function $\Phi^{-1}$. That is, $T=\Phi\circ\sigma\circ\Phi^{-1}$.

In \cite{Hedlund}, Hedlund shows that there are exactly two continuous
\textit{autoconjugacies} of the shift map (conjugacies from $\sigma$
to $\sigma$), namely the identity map and the ``bit complement''
map $V:\mathbb{Z}_{2}\to\mathbb{Z}_{2}$ given by 
\[
V(a_{0}a_{1}a_{2}\ldots)=b_{0}b_{1}b_{2}\ldots
\]
 where $b_{i}=1-a_{i}$ for all $i$. For instance, $V(100100100\ldots)=011011011\ldots$.

In \cite{Dad}, the second author uses Hedlund's result to demonstrate
that there are exactly two continuous autoconjugacies of $T$ with
itself. The identity map is one such map. The other, denoted $\Omega:\mathbb{Z}_{2}\to\mathbb{Z}_{2}$,
is the map 
\[
\Omega:=\Phi\circ V\circ\Phi^{-1}.
\]
 We will use $\Omega$ to demonstrate self-color-duality in $\Gamma_{2^{n}}$.
We use the fact that $V$ is an involution, and hence $\Omega$ is
an involution as well, that is, $\Omega^{2}=1$. In particular, we
have that 
\[
T_{1}\circ\Omega=\Omega\circ T_{0}
\]
 and 
\[
\Omega\circ T_{0}=T_{1}\circ\Omega
\]
 where these maps are defined.

Finally, a map $f:\mathbb{Z}_{2}\to\mathbb{Z}_{2}$ is called \textit{solenoidal}
if it induces a permutation on $\mathbb{Z}/2^{n}\mathbb{Z}$ for all
$n$. It is known (\cite{Hedlund}, \cite{Lagarias}, \cite{Dad})
that the maps $V$, $\Phi$, $\Phi^{-1}$, and hence $\Omega$ are
all solenoidal. Note that $\Omega$ therefore induces an involution
on $\mathbb{Z}/2^{n}\mathbb{Z}$ as well.

We now have the tools to prove Theorem \ref{selfdual}.
\begin{proof}
Let $n\geq1$ and let $\Gamma_{2^{n}}^{\ast}$ denote the color dual
of $\Gamma_{2^{n}}$. Let $\Gamma_{2^{n}}^{\Omega}$ denote the graph
formed from $\Gamma_{2^{n}}$ by replacing each node label $a$ with
$\Omega(a)\bmod2^{n}$. We show that $\Gamma_{2^{n}}^{\Omega}=\Gamma_{2^{n}}^{\ast}$,
from which it follows that $\Gamma_{2^{n}}^{\ast}$ is isomorphic
to $\Gamma_{2^{n}}$ up to a relabeling of the nodes.

Suppose that in $\Gamma_{2^{n}}^{\ast}$, there is a red arrow from
$a$ to $b$. Then in $\Gamma_{2^{n}}$, there is a black arrow from
$a$ to $b$. It follows that there are positive integers $x$ and
$y$ congruent to $a$ and $b$ mod $2^{n}$ respectively for which
$T_{0}(x)=y$. Therefore $\Omega(T_{0}(x))=\Omega(y)$, and hence
$T_{1}(\Omega(x))=\Omega(y)$. Thus, in $\Gamma_{2^{n}}$, there is
a red arrow from $\Omega(a)$ to $\Omega(b)$. Since $\Omega$ is
an involution, in $\Gamma_{2^{n}}^{\Omega}$, there is a red arrow
from $\Omega(\Omega(a))=a$ to $\Omega(\Omega(b))=b$.

Similarly, if there is a black arrow from $a$ to $b$ in $\Gamma_{2^{n}}^{\ast}$
then there is a black arrow from $a$ to $b$ in $\Gamma_{2^{n}}^{\Omega}$.

For the reverse direction, suppose that in $\Gamma_{2^{n}}^{\Omega}$
there is a red arrow from $a$ to $b$. Then in $\Gamma_{2^{n}}$,
there is a red arrow from $\Omega(a)$ to $\Omega(b)$. Thus there
are positive integers $x$ and $y$ congruent to $a$ and $b$ mod
$2^{n}$ respectively for which $T_{1}(\Omega(x))=\Omega(y)$. Thus
$\Omega(T_{0}(x))=\Omega(y)$, and since $\Omega$ is an involution,
we have $T_{0}(x)=y$. It follows that there is a red arrow from $a$
to $b$ in $\Gamma_{2^{n}}^{\ast}$.

A similar argument shows that if there is a black arrow from $a$
to $b$ in $\Gamma_{2^{n}}^{\Omega}$ then there is a black arrow
from $a$ to $b$ in $\Gamma_{2^{n}}^{\ast}$. This shows that $\Gamma_{2^{n}}$
is self color dual. 

To prove that no other $\Gamma_{k}$ is self color dual, let $k=2^{n}b$
where $b$ is an odd positive integer greater than $1$ and assume
that $\Gamma_{2^{n}b}$ is self color dual. Then there exists a graph
isomorphism $\rho:\Gamma_{2^{n}b}\to\Gamma_{2^{n}b}$ mapping red
arrows to black ones and vice versa.

For any node $z$ in $\Gamma_{2^{n}b}$ define $\hat{T}_{0}\left(z\right)$
to be the set of nodes $w$ such that there is a black arrow from
$z$ to $w$ and $\hat{T}_{1}\left(z\right)$ to be the set of nodes
$w$ such that there is a red arrow from $z$ to $w$. Furthermore,
for any nonnegative integer $k$ define $\hat{T}^{k}\left(z\right)$
to be the set of nodes that can be reached starting from $z$ by a
path of length $k$. Clearly the graph isomorphism $\rho$ must preserve
the number of nodes that can be reached in such a manner, i.e. 
\begin{equation}
\left|\hat{T}^{k}\left(z\right)\right|=\left|\hat{T}^{k}\left(\rho\left(z\right)\right)\right|\label{eq:cardinality}
\end{equation}
 for any $z$ and $k$.

Suppose a node $z$ has a black arrow from $z$ to itself. Then by
the proof of Proposition \ref{GammaStructure}, if $n>0$ then $z$
is even and there exists an even integer $2a$ congruent to $z$ modulo
$2^{n}b$ such that either $T_{0}\left(2a\right)=a$ or $T_{0}\left(2a+2^{n}b\right)=a+2^{n-1}b$
is congruent to $z$, and thus to $2a$, modulo 2$^{n}b$. Thus either
$a\xcong{2^{n}b}0$ or $a\xcong{2^{n}b}2^{n-1}b$ so that in both
cases $2a$, and thus $z$, must be congruent to $0$ modulo $2^{n}b$.
A similar argument shows that the only node $z$ that has a red arrow
from $z$ to itself is $-1$. Since any color reversing graph isomorphism
must map these nodes to each other, $\rho\left(-1\right)=0$ and $\rho\left(0\right)=-1$.

We now show by finite induction that for any $k\in\left\{ 0,1,\ldots,n\right\} $,
$\hat{T}^{k}\left(-1\right)$ is the set of all nodes $z$ such that
$z\xcong{2^{n-k}b}-1$. For the base case, notice that $\hat{T}^{0}\left(z\right)=\left\{ z\right\} $
so that in particular $\hat{T}^{0}\left(-1\right)=\left\{ -1\right\} $,
i.e. the set of nodes that are congruent to $-1$ modulo $2^{n}b$.
If $n=0$ then we are done. If not, let $k<n$ and assume that $\hat{T}^{k}\left(-1\right)=\left\{ z\mid z\xcong{2^{_{n-k}}b}-1\right\} $
which is a set of odd nodes. Then $\hat{T}^{k+1}\left(-1\right)$
is the set of nodes obtained by following a red arrow from a node
$z\in\hat{T}^{k}\left(-1\right)$. Since $z\xcong{2^{n}b}-1+2^{n-k}bj$
for some $j$, and $T_{1}\left(-1+2^{n-k}bj\right)=-1+3\cdot2^{n-\left(k+1\right)}bj\xcong{2^{n-\left(k+1\right)}b}-1$
it follows that $z$ is in the set of all nodes that are congruent
to $-1$ modulo $2^{n-\left(k+1\right)}b$. Conversely if $w$ is
congruent to $-1$ modulo $2^{n-\left(k+1\right)}b$, then $w=-1+2^{n-\left(k+1\right)}bl$
for some $l$ and thus is congruent modulo $2^{n-k}b$ to 
\[
-1+2^{n-\left(k+1\right)}bl+2^{n-k}b=-1+3\cdot2^{n-\left(k+1\right)}bl=T_{1}\left(-1+2^{n-k}bl\right)
\]
Since $-1+2^{n-k}bl\xcong{2^{n-k}b}-1$ it is congruent to an element
of $\hat{T}^{k}\left(-1\right)$ and so there is a red arrow from
an element of $\hat{T}^{k}\left(-1\right)$ to $w$. Thus $\hat{T}^{k+1}\left(-1\right)$
is the set of nodes that are congruent to $-1$ modulo $2^{n-\left(k+1\right)}b$,
which completes the induction.

A similar argument shows that $\hat{T}^{k}\left(0\right)$ is the
set of all nodes $z$ such that $z\xcong{2^{n-k}b}0$ for all $k\in\left\{ 0,1,\ldots,n\right\} $.
Since the graph isomorphism $\rho$ must map the set of nodes that
are reachable by a path of length $n$ from $-1$ to the set of nodes
reachable by a path of length $n$ from $\rho\left(-1\right)=0$ we
have that $\rho$ maps the set of nodes congruent to $-1$ modulo
$b$ to those congruent to $0$ modulo $b$.

Now $b$ is odd, so $2$ is invertible modulo $b$. Let $z\xcong b-1$
and not congruent to $-1$ modulo $2b$. Then $z$ is even and $T_{0}\left(z\right)\xcong b-\frac{1}{2}$.
Conversely, if $w\xcong b-\frac{1}{2}$ then $w=T_{0}\left(z\right)$
for some even $z\xcong b-1$. Thus every node $z\xcong b-1$ that
is not congruent to $-1$ modulo $2b$ has a black arrow from $z$
to a node $w$ congruent to $-\frac{1}{2}$ modulo $b$ and every
node $w\xcong b-\frac{1}{2}$ has such an arrow pointing to it. Since
we have seen that all other nodes congruent to $-1$ modulo $b$ only
have arrows pointing to other such nodes, $\hat{T}^{n+1}\left(-1\right)$
consists of all nodes congruent to either $-1$ or $-\frac{1}{2}$
modulo $b$. 

Similar arguments show that $\hat{T}^{n+2}\left(-1\right)$ consists
of all nodes congruent to either $-1$, $-\frac{1}{2}$, or $-\frac{1}{4}$
modulo $b$ and that $\hat{T}^{n+2}\left(0\right)$ consists of all
nodes congruent to either $0$, $\frac{1}{2}$, $\frac{1}{4}$, or
$\frac{5}{4}$ modulo $b$. 

If $b$ is odd and greater than $5$, directly counting these nodes
shows that $\left|\hat{T}^{n+2}\left(-1\right)\right|=3\cdot2^{n}$
while $\left|\hat{T}^{n+2}\left(\rho\left(-1\right)\right)\right|=\left|\hat{T}^{n+2}\left(0\right)\right|=4\cdot2^{n}$
contradicting \ref{eq:cardinality}. If $b=3$, directly counting
these nodes shows that $\left|\hat{T}^{n+2}\left(-1\right)\right|=2\cdot2^{n}$
while $\left|\hat{T}^{n+2}\left(\rho\left(-1\right)\right)\right|=\left|\hat{T}^{n+2}\left(0\right)\right|=3\cdot2^{n}$
again contradicting \ref{eq:cardinality}. 

Finally, suppose $b=5$. Then $ $$\hat{T}^{n+1}\left(-1\right)\setminus\hat{T}^{n}\left(-1\right)$
consists of the nodes congruent to $-\frac{1}{2}$ modulo $5$ (i.e.
the nodes congruent to $2\bmod5$). All arrows from these nodes point
to a node congruent to $-\frac{1}{4}$ modulo $5$ (i.e. the nodes
congruent to $1\bmod5$). No node congruent to $1\bmod5$ is in $\hat{T}^{n}\left(-1\right)$
as these are all congruent to $-1\bmod5$. Similarly, $\hat{T}^{n+1}\left(0\right)\setminus\hat{T}^{n}\left(0\right)$
consists of the nodes congruent to $\frac{1}{2}$ modulo $5$ (i.e.
the nodes congruent to $3\bmod5$). 

Since $\rho$ is a graph isomorphism, it must map the set of nodes
$\hat{T}^{n}\left(-1\right)$ to $\hat{T}^{n}\left(0\right)$ and
$\hat{T}^{n+1}\left(-1\right)\setminus\hat{T}^{n}\left(-1\right)$
to $\hat{T}^{n+1}\left(0\right)\setminus\hat{T}^{n}\left(0\right)$
and also preserve the property that no arrow coming from a node in
$\hat{T}^{n+1}\left(0\right)\setminus\hat{T}^{n}\left(0\right)$ can
map to a node in $\hat{T}^{n}\left(0\right)$. But since $T(3)=5$,
the node $3$ has a red arrow mapping it to the node $5$, which is
in $\hat{T}^{n}\left(0\right)$ as these are all the nodes congruent
to $0\bmod5$. This is a contradiction, which completes the proof.
\end{proof}
To illustrate Theorem \ref{selfdual}, the graph $\Gamma_{8}$ is
shown in Figure \ref{mod8}, with each odd residue drawn directly
above its image under $\Omega$.
\begin{figure}
\begin{centering}
\includegraphics{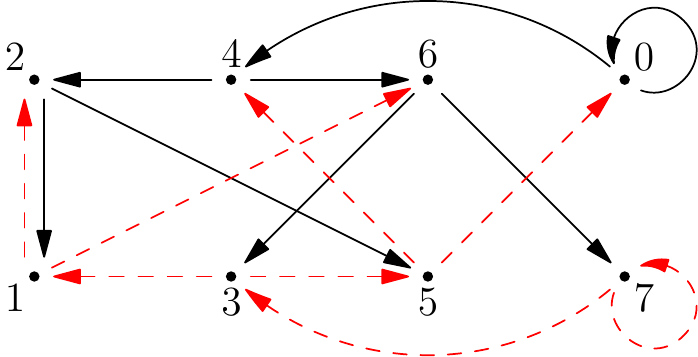} 
\par\end{centering}

\caption{\label{mod8} The digraph $\Gamma_{8}$. Self color duality is evident
by reflecting about the horizontal.}
\end{figure}

\subsection{Folding}

An \textit{endomorphism} of a map $f:\mathbb{Z}_{2}\to\mathbb{Z}_{2}$
is any map $h:\mathbb{Z}_{2}\to\mathbb{Z}_{2}$ for which $f\circ h=h\circ f$.
Note that an endomorphism is not necessarily invertible, and so while
all autoconjugacies of $T$ are endomorphisms of $T$, there may be
endomorphisms which are not autoconjugacies.

In \cite{Maria}, the fourth author classified and studied all continuous
endomorphisms of $T$ having solenoidal parity vector functions, and
in \cite{Keenan}, the first author and Kraft studied the remaining
continuous endomorphisms of $T$. It is natural to ask whether these
endomorphisms yield further insights into the structure of the graphs
$\Gamma_{2^{n}}$.

The simplest example of a continuous endomorphism of $T$ which is
not an autoconjugacy is defined in \cite{Maria} as follows. Let $D:\mathbb{Z}_{2}\to\mathbb{Z}_{2}$
be the \textit{discrete derivative} map, given by $D(a_{0}a_{1}a_{2}\ldots)=d_{0}d_{1}d_{2}\ldots$
where $d_{i}=|a_{i}-a_{i+1}|$ for all $i$. Then 
\[
R:=\Phi\circ D\circ\Phi^{-1}
\]
 is an endomorphism of $T$.

Unlike $\Omega$, the function $R$ is not solenoidal, since $D$
is not solenoidal. However, the value of $x$ mod $2^{n}$ determines
the value of $D(x)$ mod $2^{n-1}$ for all $n$. In particular, $D$
induces a $2$-to-$1$ map $\mathbb{Z}/2^{n}\mathbb{Z}\to\mathbb{Z}/2^{n-1}\mathbb{Z}$,
with $D(x)=D(V(x))$ for all $x$. Thus $R$ also induces a $2$-to-$1$
map $\mathbb{Z}/2^{n}\mathbb{Z}\to\mathbb{Z}/2^{n-1}\mathbb{Z}$,
with $R(x)=R(\Omega(x))$ for all $x$. We therefore obtain the following.
\begin{prop}
\label{folding} Let $n\ge2$ be a positive integer. 
\begin{itemize}
\item [\emph{(a)}]For any $x,y\in\mathbb{Z}/2^{n}\mathbb{Z}$, there is
a black edge between $R(x)\bmod2^{n-1}$ and $R(y)\bmod2^{n-1}$ in
$\Gamma_{2^{n-1}}$ if and only if there is a path of length two in
$\Gamma_{2^{n}}$ from $x$ to $y$ that consists of either two black
or two red edges. 
\item [\emph{(b)}]For any $x,y\in\mathbb{Z}/2^{n}\mathbb{Z}$, there is
a red edge between $R(x)\bmod2^{n-1}$ and $R(y)\bmod2^{n-1}$ in
$\Gamma_{2^{n-1}}$ if and only if there is a path of length two in
$\Gamma_{2^{n}}$ from $x$ to $y$ that consists of one black and
one red edge. 
\end{itemize}
\end{prop}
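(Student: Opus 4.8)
The plan is to pass to parity-vector coordinates, where both graphs and the map $R$ acquire a transparent combinatorial form. Using the conjugacy $T = \Phi \circ \sigma \circ \Phi^{-1}$ together with the fact that $\Phi^{-1}$ is solenoidal, the bijection $x \bmod 2^m \leftrightarrow (x \bmod 2,\, T(x)\bmod 2,\, \ldots,\, T^{m-1}(x)\bmod 2)$ identifies the nodes of $\Gamma_{2^m}$ with binary strings $a_0 a_1 \cdots a_{m-1}$ of length $m$. Under this identification a $T$-edge is the operation ``drop the leading bit and append a new one,'' sending $a_0 a_1 \cdots a_{m-1}$ to $a_1 \cdots a_{m-1} c$ for $c \in \{0,1\}$, its color being black when $a_0 = 0$ and red when $a_0 = 1$; this recovers the two-arrow structure of Proposition \ref{GammaStructure}(a). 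First I would record that in these coordinates $R = \Phi \circ D \circ \Phi^{-1}$ acts simply as the discrete derivative $D(a_0 \cdots a_{n-1}) = (|a_0 - a_1|, \ldots, |a_{n-2} - a_{n-1}|)$, that this realizes the claimed $2$-to-$1$ map $\mathbb{Z}/2^n\mathbb{Z} \to \mathbb{Z}/2^{n-1}\mathbb{Z}$, and that its fibers are exactly the pairs $\{z, \Omega(z)\}$, since $D(a) = D(V(a))$ and $\Omega = \Phi \circ V \circ \Phi^{-1}$.

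The heart of the argument is a single bit computation. Because $D \circ \sigma = \sigma \circ D$, we have $R \circ T = T \circ R$, so $R$ carries each step of a $T$-path upstairs to a step of a $T$-path downstairs. The new content is the \emph{color} of the resulting downstairs step: the leading bit of $D(\Phi^{-1}(x))$ is $|a_0 - a_1|$, where $a_0$ is the color of the upstairs step leaving $x$ and $a_1$ is the color of the following upstairs step (the leading bit of the successor). Hence the downstairs edge obtained from a length-two upstairs path is black precisely when $|a_0 - a_1| = 0$, i.e. when the two upstairs edges share a color (two black or two red), and red precisely when $|a_0 - a_1| = 1$, i.e. when the two upstairs colors differ. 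This is exactly the dichotomy asserted in (a) and (b), and it is where the length-two hypothesis enters: a single downstairs edge ``remembers'' the comparison of two consecutive upstairs colors.

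To finish I would assemble both implications and both colors simultaneously, using $\Omega$ to manage the $2$-to-$1$ fibers. From a monochromatic (resp.\ mixed) length-two upstairs path the previous step yields the desired black (resp.\ red) downstairs edge between the appropriate images; conversely, given such a downstairs edge I would lift it through $R$ and invoke $\Omega \circ T_0 = T_1 \circ \Omega$ and $\Omega \circ T_1 = T_0 \circ \Omega$ to produce an upstairs length-two path of the matching color type, observing that $\Omega$ swaps the two colors yet preserves the property of being monochromatic, so the two lifts within a fiber behave consistently. The main obstacle I expect lies in precisely this bookkeeping rather than in the bit identity: since $R$ is $2$-to-$1$ and $T$ is multivalued modulo $2^n$ (each node emits two arrows of its color), one must verify that the choice of lift in a fiber and the choice of appended bit do not affect the outcome, and that the nodes $R(x)$ and $R(y)$ are correctly matched to the two ends of the upstairs path. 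Carefully pinning down these index choices is the delicate part of the proof.
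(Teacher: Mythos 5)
Your overall route --- passing to parity-vector coordinates via $\Phi^{-1}$, identifying nodes of $\Gamma_{2^m}$ with $m$-bit strings on which $T$-edges act as ``drop the leading bit, append a new one,'' and reading off the downstairs color as $|a_0-a_1|$ --- is exactly the intended argument; the paper states the proposition with no explicit proof beyond the preceding discussion of $D$, $V$, and $\Omega$, so there is nothing more to compare against. The bit identity you isolate is correct and is the entire mathematical content.

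However, the ``bookkeeping'' you defer to the end is not mere bookkeeping, and leaving it unresolved is a genuine gap: your computation produces a downstairs edge from $R(x)\bmod 2^{n-1}$ to $R(T(x))\bmod 2^{n-1}$, i.e.\ to the image of the \emph{middle} node of the length-two path, not to the image of its far endpoint $y$. Concretely, if $x\bmod 2^n$ corresponds to $a_0\cdots a_{n-1}$ and the path is $x\rat z\rat y$, then $R(x)\bmod 2^{n-1}=b_0\cdots b_{n-2}$ with $b_i=|a_i-a_{i+1}|$, its out-neighbors are $b_1\cdots b_{n-2}c$, and $R(z)\bmod 2^{n-1}=b_1\cdots b_{n-1}$ is such a neighbor, while $R(y)\bmod 2^{n-1}=b_2\cdots b_n$ is in general two steps away. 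If you try to close the gap for the statement with $y$ as the far endpoint, it fails: take $n=3$ and the mixed path $1\rat 6\rat 3$ in $\Gamma_8$ (red edge $9\mapsto 14$, black edge $6\mapsto 3$). One computes $R(1)\equiv 3$, $R(3)\equiv 2$, and $R(6)\equiv 1 \pmod 4$; there is no red edge between $3$ and $2$ in $\Gamma_4$ in either direction (the node $2$ is even and emits only black arrows, and the red arrows from $3$ go to $1$ and $3$), whereas there is indeed a red edge from $R(1)=3$ to $R(6)=1$. So the statement your computation actually proves is the one with $R(x)$ joined to $R(z)$, $z$ the midpoint, with the color of that edge determined by whether the two upstairs edges of the path $x\rat z\rat y$ agree in color. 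You should prove that corrected formulation (which is what the folding picture of $\Gamma_8$ onto $\Gamma_4$ by identifying $\Omega$-pairs requires) rather than assert that the dichotomy ``is exactly'' the one in (a) and (b) as written.
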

In other words, $\Gamma_{2^{n}}$ ``folds'' onto $\Gamma_{2^{n-1}}$
by identifying $\Omega$-pairs and using $D$ to define the edges.
For $n=3$, the graph $\Gamma_{8}$ shown in Figure \ref{mod8} can
be folded to obtain the graph $\Gamma_{4}$, by identifying the $\Omega$-pairs
of nodes and drawing in new edges according Proposition \ref{folding}.
(See Figure \ref{foldingexample}.)

\begin{figure}
\begin{centering}
\includegraphics{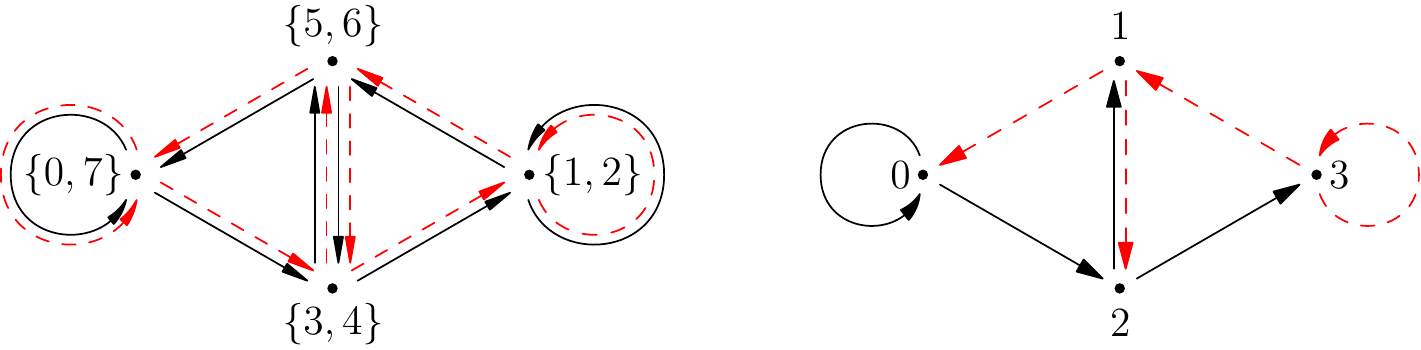} 
\par\end{centering}

\caption{\label{foldingexample} At left, the graph formed by identifying the
pairs of nodes in $\Gamma_{8}$ that map to each other under $\Omega$.
At right, the graph $\Gamma_{4}$.}
\end{figure}

More generally, we can fold the graphs $\Gamma_{2^{n}}$ onto any
$\Gamma_{2^{t}}$ for $t\le n$ in a similar manner using the endomorphisms
studied in \cite{Keenan}. For each $k\ge2$, define $M_{k}:\mathbb{Z}_{2}\to\mathbb{Z}_{2}$
to be the map given by $M_{k}(a_{0}a_{1}a_{2}\ldots)=m_{0}m_{1}m_{2}\ldots$
where 
\[
m_{i}=a_{i}+a_{i+1}+\cdots+a_{i+k-1}\bmod2
\]
 for all $i$. Then 
\[
H_{M_{k}}:=\Phi\circ M_{k}\circ\Phi^{-1}
\]
 is an endomorphism of $T$. Note that $M_{2}=D$ and $H_{M_{2}}=R$.

The aim of this section is to prove the following result, which enables
us to obtain more strongly sufficient sets modulo powers of $2$.
\begin{thm}
\label{MainFolding} Suppose $a_{1},a_{2},\ldots,a_{l}\bmod2^{n}$
satisfy the criterion for strong sufficiency of Proposition \ref{strongsufficiency}
for $d=2^{n}$. Let $q$ be the length of the largest cycle in $\Gamma''_{2^{n}}$,
and let $k$ be any positive integer satisfying $kq\le630,\!138,\!897$.
Then the preimage of $\{a_{1},a_{2},\ldots,a_{l}\}$ under $H_{M_{k}}$
modulo $2^{n+k-1}$ also satisfies the criterion from Proposition
\ref{strongsufficiency}, and is therefore a strongly sufficient set.\end{thm}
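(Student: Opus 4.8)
The plan is to verify that the preimage set $S=H_{M_k}^{-1}(\{a_{1},\ldots,a_{l}\})\bmod 2^{n+k-1}$ satisfies the graph-theoretic criterion of Proposition \ref{strongsufficiency} for the modulus $d=2^{N}$, where $N=n+k-1$; strong sufficiency is then immediate from that proposition. The engine is a folding correspondence for $\hmk$ generalizing Proposition \ref{folding}. Recall that $T$ is conjugate to the shift $\sigma$ via $\Phi$, so every infinite forward path in $\Gamma_{2^{N}}$ arises as $\bigl(T^{j}X\bmod 2^{N}\bigr)_{j\ge 0}$ for a unique $2$-adic integer $X$, read off through a moving window of length $N$ in its parity vector. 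Since $M_{k}$ is a sliding block code reducing length by $k-1$, it commutes with $\sigma$, so $\hmk=\Phi\circ M_{k}\circ\Phi^{-1}$ commutes with $T$ and induces a $2^{k-1}$-to-$1$ map $\mathbb{Z}/2^{N}\mathbb{Z}\to\mathbb{Z}/2^{n}\mathbb{Z}$. First I would record the node-wise projection: applying $\hmk$ and using $\hmk\circ T=T\circ\hmk$ sends the path above to the forward path $\bigl(T^{j}\hmk(X)\bmod 2^{n}\bigr)_{j\ge 0}$ in $\Gamma_{2^{n}}$, whose $j$-th node is $\hmk$ of the $j$-th node of the original. By the definition of $S$ as a preimage, the original path meets $S$ if and only if its projection meets $\{a_{1},\ldots,a_{l}\}$.

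Next I would use this correspondence to show $\Gamma''_{2^{N}}$ is a disjoint union of cycles and isolated vertices. Suppose, for contradiction, that some node of $\Gamma''_{2^{N}}$ lies on two distinct cycles. Running around these cycles according to any aperiodic schedule of choices produces an infinite forward path in $\Gamma_{2^{N}}$ that avoids $S$ but is not eventually periodic. Its projection avoids $\{a_{1},\ldots,a_{l}\}$ and so lies in $\Gamma'_{2^{n}}$; since $\{a_{i}\}\bmod 2^{n}$ satisfies the criterion, every such projected path eventually enters a single cycle of $\Gamma''_{2^{n}}$ and is therefore eventually periodic. Writing $m=M_{k}(X)$ for the parity vector of the projection, the telescoping identity $m_{i}+m_{i+1}=x_{i}+x_{i+k}$ over $\mathbb{F}_{2}$ gives the linear recurrence $x_{i+k}=x_{i}+m_{i}+m_{i+1}$, and an eventually periodic $m$ forces $X$, hence the original path, to be eventually periodic --- a contradiction. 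Thus no node lies on two cycles. The same argument applied to backward paths (which correspond to left-infinite parity sequences, on which $M_{k}$ acts by the identical local rule) controls the in-degrees, so $\Gamma''_{2^{N}}$ is a disjoint union of cycles and isolated vertices.

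It remains to bound the cycle lengths. A cycle of $\Gamma''_{2^{N}}$ corresponds to a purely periodic $X$ whose projection is a cycle of $\Gamma''_{2^{n}}$ of some length $q'\le q$, so $m=M_{k}(X)$ has period $q'$. From $x_{i+k}=x_{i}+m_{i}+m_{i+1}$ the homogeneous solutions (those with $m$ constant) have period dividing $k$, and telescoping the recurrence over one period of $m$ controls the inhomogeneous part; carrying this out shows the period of $X$ is at most $kq'\le kq\le 630{,}138{,}897$. Hence every cycle of $\Gamma''_{2^{N}}$ is shorter than the bound demanded by Proposition \ref{strongsufficiency}, and invoking that proposition for $d=2^{N}$ finishes the proof.

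I expect the main obstacle to be the sharp period estimate in the last step. The sliding block code $M_{k}$ has a nontrivial kernel, and the naive bound coming from $(\sigma^{\lcm(k,q')}-1)^{2}=\sigma^{2\lcm(k,q')}-1$ over $\mathbb{F}_{2}$ only yields period at most $2\lcm(k,q')$, which can exceed $kq'$ precisely when $\gcd(k,q')=1$. Obtaining the clean bound $kq'$ therefore requires a separate telescoping argument in the coprime case (where the sum of $m_{i}+m_{i+1}$ over a full period vanishes) together with the observation that $2\lcm(k,q')\le kq'$ whenever $\gcd(k,q')\ge 2$. It is exactly this casework that pins the hypothesis at $kq\le 630{,}138{,}897$ rather than something weaker, so getting the constant right is the delicate heart of the argument.
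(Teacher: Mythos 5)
Your proposal is correct in outline but takes a genuinely different route from the paper. The paper's proof is almost entirely an application of machinery it has already set up: it deletes $\overline{H}_{M_k}^{-1}(\{a_1,\ldots,a_l\})$, observes that the surviving cycles of $\Gamma'_{2^{n+k-1}}$ project into $\Gamma''_{2^n}$, invokes Lemma \ref{KeenLemma4} (preimages of cycles are disjoint unions of cycles, which rests on the fiberwise arrow bijection of Lemma \ref{KeenLemma1}) to get the disjoint-cycle structure, and then simply cites Corollary 22 of \cite{Keenan} for the length bound $kq$. You instead work at the level of parity vectors, exploiting that $M_k$ is a sliding block code commuting with the shift, and extract everything from the $\mathbb{F}_2$-recurrence $x_{i+k}=x_i+m_i+m_{i+1}$: the disjoint-cycle structure comes from an aperiodicity contradiction (an aperiodic path in $\Gamma''_{2^{n+k-1}}$ would project to an eventually periodic path, and the recurrence would then force the lift to be eventually periodic), and the length bound comes from your $\gcd(k,q')$ casework, which does close --- in the coprime case the telescoping sum $\sum(m_i+m_{i+1})$ over a period vanishes and gives $L\mid kq'$, while for $\gcd(k,q')\ge 2$ one gets $L\mid 2\lcm(k,q')\le kq'$. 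What your approach buys is self-containedness: you effectively reprove the content of Corollary 22 of \cite{Keenan} rather than importing it. What it costs is several dynamical details the paper's combinatorial route avoids: you need that every infinite forward (resp.\ reverse) path in $\Gamma_{2^{n+k-1}}$ is realized by a $2$-adic orbit (resp.\ back-tracing sequence, which works because $T_0^{-1}$ and $T_1^{-1}$ are globally defined on $\mathbb{Z}_2$), that a node of out-degree two genuinely yields an aperiodic path (this needs the observation from Proposition \ref{GammaStructure} that the two out-arrows lead to distinct nodes, so the switching schedule is recoverable from the path), and the in-degree/backward case, which you only gesture at but which does go through by the mirror-image argument. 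These are fillable, so I would count your argument as a valid alternative proof rather than a gap.
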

\begin{example*}
In Table \ref{loopsvalues}, we see that $1\bmod4$ satisfies the
criterion for strong sufficiency of Proposition \ref{strongsufficiency}.
We see from Figure \ref{foldingexample} that the inverse image of
$\{1\bmod4\}$ under $H_{M_{2}}$ is the set $\{3,4\bmod8\}$, which
is therefore also strongly sufficient.

Indeed, $\{3,4\bmod8\}$ also appears in Table \ref{loopsvalues}.
We can therefore unfold this set another time under $H_{M_{2}}$,
which shows that $\{7,8,9,10\bmod16\}$ is also strongly sufficient.
\end{example*}
To prove Theorem \ref{MainFolding}, we introduce the notation established
in \cite{Keenan}, for any $a\in\mathbb{Z}_{2}$, write $[a]$ to
denote the equivalence class of $a$ under the equivalence relation
$a\sim b$ if and only if $H_{M_{k}}(a)=H_{M_{k}}(b)$, i.e. $\left[a\right]=H_{M_{k}}^{-1}\left(\left\{ H_{M_{k}}\left(a\right)\right\} \right)$.
Notice that $H_{M_{k}}$ restricts to a well-defined surjective map
$\overline{{H}}_{M_{k}}:\mathbb{Z}/2^{n+k-1}\mathbb{Z}\to\bbZ/2^{n}\bbZ$
for any $n$ and $k>0$. We also use $\overline{a}$ to denote either
the congruence class of $a\in\bbZ_{2}$ modulo $2^{n}$ or $2^{n+k-1}$
when the power of $2$ is understood, and using this notation we have
$\overline{H_{M_{K}}(a)}=\overline{H}_{M_{k}}(\overline{a})$. Hence,
$\sim$ also restricts to an equivalence relation $\overline{\sim}$
on $\bbZ/2^{n+k-1}\bbZ$, in which residues $\overline{a}$ and $\overline{b}$
are equivalent if and only if $\overline{H}_{M_{k}}(\overline{a})=\overline{{H}}_{M_{k}}(\overline{b})$.
We also denote the $\overline{\sim}$-equivalence class of $\overline{a}$
by $[\overline{a}]$, i.e. $[\overline{a}]=\overline{H}_{M_{k}}^{-1}\left(\left\{ \overline{H}_{M_{k}}\left(\overline{a}\right)\right\} \right)$.
Notice that with this notation we have $\left[\overline{a}\right]=\overline{\left[a\right]}$.

Throughout this section, we write $x\rat y$ to indicate that there
is an arrow (either red or black) from $x$ to $y$ in the digraph
$\mathcal{G}$, $\Gamma_{2^{n+k-1}}$, or $\Gamma_{2^{n}}$.
\begin{lem}
\label{KeenLemma1} There is an arrow $\overline{H}_{M_{k}}(\overline{x})\rat\overline{H}_{M_{k}}(\overline{y})$
in $\Gamma_{2^{n}}$ if and only if there exist $\overline{a}\in[\overline{x}]$
and $\overline{b}\in[\overline{y}]$ for which $\overline{a}\rat\overline{b}$
in $\Gamma_{2^{n+k-1}}$. In this situation such arrows between the
elements of $\left[\overline{x}\right]$ and $\left[\overline{y}\right]$
form a bijection between $\left[\overline{x}\right]$ and $\left[\overline{y}\right]$.\end{lem}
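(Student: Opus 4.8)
The plan is to work one level up, in the $2$-adic dynamical system $T\colon\mathbb{Z}_2\to\mathbb{Z}_2$, and to descend via the semiconjugacy $\hmk=\Phi\circ M_k\circ\Phi^{-1}$. The starting observation is that an arrow $\overline{u}\rat\overline{v}$ in $\Gamma_{2^m}$ exists precisely when there is a $2$-adic $w$ whose reduction mod $2^m$ is $\overline u$ and for which $T(w)$ reduces to $\overline v$ mod $2^m$; this just records that the edges of $\Gamma_{2^m}$ are reductions of the graph of $T$ on $\mathbb{Z}_2$, and that integers are dense enough to realize any such edge. Throughout I would lean on three facts established earlier: the commutation $T\circ\hmk=\hmk\circ T$, the intertwining $\overline{\hmk(a)}=\overline{H}_{M_k}(\overline a)$, and the surjectivity of $\hmk$ on $\mathbb{Z}_2$ (which follows from the surjectivity of each $\overline{H}_{M_k}$).

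Both directions of the equivalence then fall out of chasing a single $2$-adic through $\hmk$. For the ``if'' direction, given $\overline a\in[\overline x]$, $\overline b\in[\overline y]$ with $\overline a\rat\overline b$ in $\Gamma_{2^{n+k-1}}$, I would lift to a $2$-adic $w$ witnessing this edge and push it down: $\hmk(w)$ reduces mod $2^n$ to $\overline{H}_{M_k}(\overline a)=\overline{H}_{M_k}(\overline x)$, and $T(\hmk(w))=\hmk(T(w))$ reduces to $\overline{H}_{M_k}(\overline b)=\overline{H}_{M_k}(\overline y)$, so $\hmk(w)$ witnesses the edge $\overline{H}_{M_k}(\overline x)\rat\overline{H}_{M_k}(\overline y)$ in $\Gamma_{2^n}$. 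For the ``only if'' direction I would reverse this: take a $2$-adic $v$ witnessing the edge in $\Gamma_{2^n}$, use surjectivity to choose $w$ with $\hmk(w)=v$, and read off $\overline a$ as the reduction of $w$ and $\overline b$ as the reduction of $T(w)$; the intertwining shows $\overline a\in[\overline x]$ and $\overline b\in[\overline y]$, and $w$ witnesses the lifted edge.

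The substantive part is the bijection claim, and this is where the sliding-window structure of $M_k$ really enters. Each node of $\Gamma_{2^{n+k-1}}$ has exactly two out-arrows (Proposition \ref{GammaStructure}(a)), coming from the two lifts $w$ and $w+2^{n+k-1}$, whose targets differ in the top bit. The key computation I would carry out is that these two targets lie in \emph{different} $\overline{H}_{M_k}$-fibers: passing to parity vectors via $\Phi^{-1}$, the two lifts differ exactly in coordinate $p_{n+k-1}$ (by solenoidality of $\Phi^{-1}$), and since $m_i=p_i+\cdots+p_{i+k-1}\bmod 2$, this flips exactly the single coordinate $m_n$ of $M_k(\Phi^{-1}w)$ while leaving $m_0,\dots,m_{n-1}$ fixed. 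Hence the two images under $\hmk$ agree mod $2^n$ but differ mod $2^{n+1}$, so $T$ carries them to the two \emph{distinct} out-neighbors of $\overline{H}_{M_k}(\overline x)$ in $\Gamma_{2^n}$. As $\overline{H}_{M_k}(\overline y)$ is one of those two out-neighbors, exactly one of the two out-arrows of $\overline a$ lands in $[\overline y]$, giving a well-defined map $[\overline x]\to[\overline y]$.

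Finally I would upgrade this map to a bijection by counting. Both fibers have the same cardinality $2^{k-1}$, so it suffices to check injectivity, i.e. that no $\overline b\in[\overline y]$ receives two arrows from $[\overline x]$. By Proposition \ref{GammaStructure}(b) each $\overline b$ has exactly two in-arrows, one black (from $2\overline b$) and one red (from $(2\overline b-1)/3$); lifting and applying the intertwining, the parity vectors of these two sources differ by the single vector $e_0=(1,0,0,\dots)$, and a direct check shows $M_k(e_0)=e_0\neq 0$, so $e_0\notin\ker M_k$ and the two sources lie in different fibers. Thus at most one in-source of $\overline b$ lies in $[\overline x]$, which gives injectivity and hence the bijection. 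I expect the bit-level separation argument of the third paragraph to be the main obstacle, since it is the one place where the precise window length $k$ and the solenoidality of $\Phi$ must be used together; the remaining steps are formal consequences of commutation, intertwining, and surjectivity.
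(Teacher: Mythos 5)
Your proposal is correct, and the two directions of the equivalence are argued exactly as in the paper (lift an edge to an honest application of $T$ on $\mathbb{Z}_2$, push it through the commutation $T\circ H_{M_k}=H_{M_k}\circ T$ and the intertwining $\overline{H_{M_k}(a)}=\overline{H}_{M_k}(\overline{a})$, and use surjectivity for the converse). Where you genuinely diverge is the bijection claim: the paper disposes of it in one line by citing Lemma 23 of \cite{Keenan}, which says that $T$ restricts to a bijection between the $2$-adic classes $[a]$ and $[T(a)]$, and then reduces mod $2^{n+k-1}$; you instead prove it from scratch by a counting argument --- well-definedness from out-degree $2$ plus the fact that flipping $p_{n+k-1}$ leaves $m_0,\dots,m_{n-1}$ fixed while flipping $m_n$, injectivity from in-degree $2$ plus $M_k(e_0)=e_0\neq 0$, and then equality of the fiber cardinalities $2^{k-1}$ (which follows from the $\mathbb{F}_2$-linearity of $M_k$ on truncated parity vectors, a point worth stating explicitly). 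One small imprecision: flipping $p_{n+k-1}$ flips \emph{all} of $m_n,m_{n+1},\dots,m_{n+k-1}$, not ``exactly the single coordinate $m_n$''; your conclusion (images agree mod $2^n$ but differ mod $2^{n+1}$) only needs $m_0,\dots,m_{n-1}$ fixed and $m_n$ flipped, so the argument survives, but the sentence should be corrected. The trade-off is clear: your route is self-contained and exposes the sliding-window mechanism at the bit level, while the paper's is shorter but leans on the structure theory of the $\sim$-classes developed in \cite{Keenan}.
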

\begin{proof}
Suppose $\overline{a}\in[\overline{x}]$ and $\overline{b}\in[\overline{y}]$
such that $\overline{a}\rat\overline{b}$ in $\Gamma_{2^{n+k-1}}$.
Then there exist $a,b\in\bbZ_{2}$ in the congruence classes $\overline{a}$
and $\overline{b}$ modulo $2^{n+k-1}$ respectively, $ $with $T(a)=b$.
Then since $H_{M_{k}}$ is an endomorphism of $T$, 
\begin{eqnarray*}
T\left(H_{M_{k}}\left(a\right)\right) & = & H_{M_{k}}\left(T\left(a\right)\right)\\
 & = & H_{M_{k}}\left(b\right).
\end{eqnarray*}
Thus $H_{M_{k}}\left(a\right)\rat H_{M_{k}}\left(b\right)$ in $\mathcal{G}$
and thus $\overline{H}_{M_{k}}(\overline{a})\rat\overline{H}_{M_{k}}(\overline{b})$
in $\Gamma_{2^{n}}$. 

Conversely, suppose $\overline{H}_{M_{k}}(\overline{x})\rat\overline{H}_{M_{k}}(\overline{y})$
in $\Gamma_{2^{n}}$. Then $H_{M_{k}}\left(x\right)\rat H_{M_{k}}\left(y\right)$
in $\mathcal{G}$. Therefore $T\left(H_{M_{k}}\left(x\right)\right)=H_{M_{k}}\left(y\right)$.
Thus $H_{M_{k}}\left(T\left(x\right)\right)=H_{M_{k}}\left(y\right)$
and thus $\left[y\right]=\left[T\left(x\right)\right]$. So taking
$a=x$ and $b=T\left(x\right)$ we have $a\rat b$ in $\mathcal{G}$
and consequently $\overline{a}\rat\overline{b}$ in $\Gamma_{2^{n+k-1}}$
and $\overline{a}\in[\overline{a}]=[\overline{x}]$ and $\overline{b}\in\left[\overline{T\left(x\right)}\right]=\left[\overline{y}\right]$.

Let $s,t$ be nodes in $\Gamma_{2^{n+k-1}}$ such that $s\rat t$.
Then $s=\overline{a}$ for some $a$ and $t=\overline{T\left(a\right)}$.
Since $T$ restricts to a bijection between $\left[a\right]$ and
$\left[T\left(a\right)\right]$ by the proof of Lemma 23 in \cite{Keenan}
it induces a bijection from $\overline{\left[a\right]}=\left[s\right]$
to $\overline{\left[T\left(a\right)\right]}=\left[t\right]$ in $\Gamma_{2^{n+k-1}}$.\end{proof}
\begin{lem}
\label{KeenLemma4} Suppose $x_{1},\ldots,x_{j}$ are nodes in $\Gamma_{2^{n}}$
such that the subgraph induced by these nodes is a cycle. Then the
subgraph induced by $\overline{H}_{M_{k}}^{-1}\left(\left\{ x_{1},\ldots,x_{j}\right\} \right)$
is a union of disjoint cycles in $\Gamma_{2^{n+k-1}}$. \end{lem}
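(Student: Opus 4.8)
The plan is to show that the induced subgraph on $\overline{H}_{M_k}^{-1}(\{x_1,\ldots,x_j\})$ has every vertex with in-degree and out-degree exactly one, and then to invoke the elementary fact that a finite directed graph in which every vertex has in-degree and out-degree equal to one is a disjoint union of directed cycles. Throughout I would write $F_i=\overline{H}_{M_k}^{-1}(\{x_i\})=[\overline{x_i}]$ for the fiber over $x_i$, noting that since the $x_i$ are distinct nodes of the cycle the preimage decomposes as a disjoint union $\overline{H}_{M_k}^{-1}(\{x_1,\ldots,x_j\})=F_1\sqcup\cdots\sqcup F_j$.

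First I would apply Lemma \ref{KeenLemma1} in its bijection form to each cycle edge. Since $x_1\rat x_2\rat\cdots\rat x_j\rat x_1$ in $\Gamma_{2^n}$, for each $i$ (indices taken mod $j$) the arrows of $\Gamma_{2^{n+k-1}}$ running from $F_i$ to $F_{i+1}$ form a bijection $F_i\to F_{i+1}$. In particular every node of $F_i$ emits exactly one arrow into $F_{i+1}$, and every node of $F_{i+1}$ receives exactly one arrow from $F_i$.

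Next I would verify that these are the only arrows present in the induced subgraph, which is where the hypothesis is used in full strength. Suppose $\overline{a}\rat\overline{b}$ is any arrow of $\Gamma_{2^{n+k-1}}$ with both endpoints in $\bigsqcup_i F_i$, say $\overline{a}\in F_i$ and $\overline{b}\in F_m$. By the forward direction of Lemma \ref{KeenLemma1} this projects to an arrow $\overline{H}_{M_k}(\overline{a})=x_i\rat x_m=\overline{H}_{M_k}(\overline{b})$ in $\Gamma_{2^n}$. But the only arrows among $\{x_1,\ldots,x_j\}$ are the cycle arrows $x_i\rat x_{i+1}$, so $m\equiv i+1\pmod{j}$. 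Hence every arrow of the induced subgraph runs from some $F_i$ into $F_{i+1}$. Combining this with the previous step, each node of $F_i$ has out-degree exactly one (its unique arrow into $F_{i+1}$) and in-degree exactly one (its unique arrow from $F_{i-1}$, supplied by the bijection $F_{i-1}\to F_i$), so the induced subgraph is a disjoint union of directed cycles.

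Most of the real content is carried by Lemma \ref{KeenLemma1}, so the argument is short; the one point that needs care is the third step, where the assumption that $\{x_1,\ldots,x_j\}$ \emph{induces} a cycle (rather than merely containing one) is precisely what forbids arrows between non-consecutive fibers and thereby holds the degrees down to one. A small edge case worth recording is $j=1$, in which the ``cycle'' is a self-loop $x_1\rat x_1$: here Lemma \ref{KeenLemma1} supplies a single permutation of $F_1$, whose cycle decomposition is again a disjoint union of cycles, so the conclusion still holds.
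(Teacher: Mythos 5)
Your proof is correct and follows essentially the same route as the paper's: both rest on the bijection form of Lemma \ref{KeenLemma1} applied to consecutive fibers, together with the observation that the induced-cycle hypothesis forbids arrows between non-consecutive fibers. The only difference is one of packaging --- you conclude by citing the standard fact that a finite digraph with all in- and out-degrees equal to one decomposes into disjoint cycles, whereas the paper re-derives that fact by tracing the sequence $z\rat z_{1}\rat z_{2}\rat\cdots$ and using the bijection to show it must close up at $z$.
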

\begin{proof}
Consider a sequence of nodes $x_{1},\ldots,x_{j}$ in $\Gamma_{2^{n}}$
that form a cycle in $\Gamma_{2^{n}}$, such that the only arrows
between the $x_{i}$'s are the arrows forming the cycle. Then by Lemma
\ref{KeenLemma1}, the arrows from nodes of $\overline{H}_{M_{k}}^{-1}\left(\left\{ x_{i}\right\} \right)$
and $\overline{H}_{M_{k}}^{-1}\left(\left\{ x_{i+1}\right\} \right)$
form a bijection between these sets for any $1\le i\le j$ (where
we set $x_{j+t}:=x_{t}$ for convenience). 

Now, given a node $z$ in one of the sets $\overline{H}_{M_{k}}^{-1}\left(\left\{ x_{i}\right\} \right)$,
there is exactly one arrow $z\rat z_{1}$ for some $z_{1}\in\overline{H}_{M_{k}}^{-1}\left(\left\{ x_{i+1}\right\} \right)$.
Moreover, since the only arrows between the $x_{i}$'s are the arrows
forming the cycle, there are no arrows from $z$ to any other node
in $\overline{H}_{M_{k}}^{-1}\left(\left\{ x_{1},\ldots,x_{j}\right\} \right)$.
Similarly, there is a unique arrow $z_{1}\rat z_{2}$ for some $z_{2}\in\overline{H}_{M_{k}}^{-1}\left(\left\{ x_{i+2}\right\} \right)$,
and there are no other arrows from $z_{1}$ into $\overline{H}_{M_{k}}^{-1}\left(\left\{ x_{1},\ldots,x_{j}\right\} \right)$.S
We continue this process to define a sequence of nodes $z\rat z_{1}\rat z_{2}\rat z_{3}\rat\cdots$.

Since there are only a finite number of nodes in $\bigcup_{i=1}^{j}\overline{H}_{M_{k}}^{-1}\left(\left\{ x_{i}\right\} \right)$,
the sequence $z,z_{1},z_{2},z_{3}\ldots$ must be eventually repeating,
say with minimum period $m$. Suppose $z_{t}\neq z$ is the first
entry at which the sequence repeats. Then both $z_{t-1}\rat z_{t}$
and $z_{t-1+m}\rat z_{t}$, and so $z_{t-1}$and $z_{t-1+m}$ must
both lie in $\overline{H}_{M_{k}}^{-1}\left(\left\{ x_{i+t-1}\right\} \right)$.$ $
But $z_{t-1}\neq z_{t-1+m}$ by minimality, contradicting Lemma \ref{KeenLemma1}.
It follows that $z_{t}=z$, and so the sequence $z,z_{1},z_{2},z_{3}\ldots$
is a cycle.

Similarly, choosing a node $z'$ not in this cycle, there is a cycle
$z'\rat z'_{1}\rat z'_{2}\rat z'_{3}\rat\cdots$ that must be disjoint
from the previous cycle. Continuing in this manner, we see that $\overline{H}_{M_{k}}^{-1}\left(\left\{ x_{1},\ldots,x_{j}\right\} \right)$
is a union of disjoint cycles in $\Gamma_{2^{n+k-1}}.$
\end{proof}
We now have the tools to prove our main result on folding.
\begin{proof}
{[}Proof of Theorem \ref{MainFolding}{]} Suppose $a_{1},a_{2},\ldots,a_{l}\bmod2^{n}$
satisfy the criterion for strong sufficiency of Proposition \ref{strongsufficiency}
for $d=2^{n}$. Let $\Gamma_{2^{n+k-1}}''$ be the graph formed by
deleting the preimage of $\{a_{1},a_{2},\ldots,a_{l}\}$ under $\overline{H}_{M_{k}}$
and all edges which are attached to those nodes, followed by the edges
which are not part of a cycle in the remaining graph. Since the edges
that remain are part of cycles, the cycles containing them map to
cycles in $\Gamma_{2^{n}}''$. Since the inverse image of such a cycle
is a disjoint union of cycles by Lemma \ref{KeenLemma4}, $\Gamma_{2^{n+k-1}}''$
consists of a disjoint union of cycles.

Let $q$ be the length of the largest cycle in $\Gamma''_{2^{n}}$,
and suppose $kq\le630,\!138,\!897$. By Corollary 22 in \cite{Keenan},
none of the cycles in $\Gamma_{2^{n+k-1}}''$ has length greater that
$kq$. Thus the maximum cycle in $\Gamma_{2^{n+k-1}}''$ has length
less than $630,\!138,\!897$. Thus $H_{M_{k}}^{-1}(\{a_{1},a_{2},\ldots,a_{l}\})$
is a set of nodes that satisfies the criterion of Proposition \ref{strongsufficiency},
and is therefore a strongly sufficient set.
\end{proof}

\section{Example: $1,3\bmod16$}

We conclude with an example that illustrates and links several of
the main results in this paper. In Table \ref{bothvalues}, we see
that $1,3\bmod16$ and $2,12\bmod16$ are sets that satisfy the third
criterion of Proposition \ref{pretzelcycles}. Figure \ref{1and3mod16}
shows $\Gamma_{16}''$ for the set $1,3\bmod16$.

\begin{figure}
\begin{centering}
\includegraphics{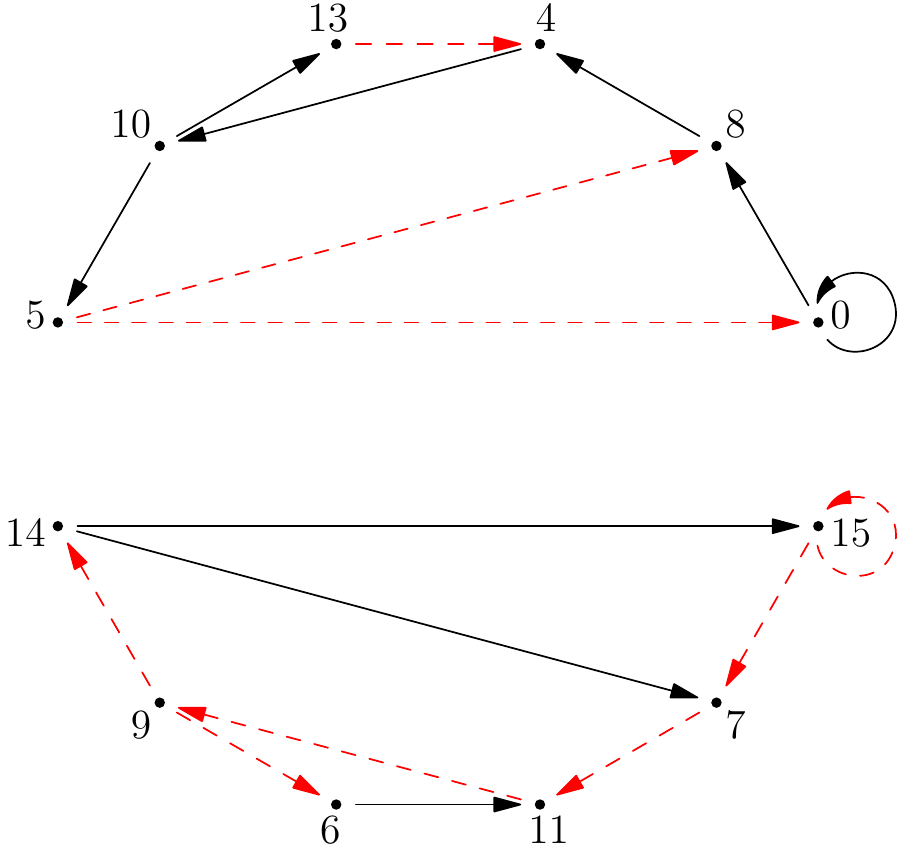} 
\par\end{centering}

\caption{\label{1and3mod16} The graph $\Gamma_{16}''$ obtained after removing
the nodes $1$ and $3$ from $\Gamma_{16}$ and then removing any
nodes or edges that are not contained in a cycle.}
\end{figure}

Notice that the connected component containing $0$ has the property
that every red arrow must be followed by at least two black arrows,
and the connected component containing $15$ has the opposite property:
every black arrow must be followed by at least two red arrows in any
infinite path. Hence, it does indeed satisfy the third criterion of
Proposition \ref{pretzelcycles}, and so every nontrivial cycle must
contain an element congruent to $1$ or $3$ mod $16$, i.e. $1,3\bmod16$
is a cycle sufficient set.

Notice further that the components exhibit the self color duality
in $\Gamma_{16}$: the connected component of $0$ maps to the connected
component of $15$ under $\Omega$, and in fact one component can
be reflected onto the other, with the colors of the arrows reversed,
matching each node with its $\Omega$-dual.

Finally, notice that $\Omega(1)\overmod{16}2$ and $\Omega(3)\overmod{16}12$.
By the self color duality of $\Gamma_{16}$, it follows that removing
the nodes $2$ and $12$ from $\Gamma_{16}$, and then removing the
nodes and edges not contained in any cycle, results in the same graph
$\Gamma_{16}''$ shown in Figure \ref{1and3mod16}. Thus $2,12\bmod16$
is a cycle sufficient set as well.

\section{Acknowledgments}

The authors would like to thank Gina Monks for her love and support
throughout this research project.


\begin{thebibliography}{10}
\bibitem{ApplegateLagarias} Applegate, D.~and Lagarias, J.~C.,
\textit{Density Bounds for the $3x+1$ Problem I. Tree-Search Method},
Math. Comp. 64 (1995), pp. 411-426. 

\bibitem{Bernstein} Bernstein, D.~J., \textit{A non-iterative $2$-adic
statement of the $3x+1$ conjecture}, Proc. Amer. Math. Soc. \textbf{121}
(1994), 405-408. 

\bibitem{Eliahou} Eliahou, S., \textit{The $3x+1$ problem: new lower
bounds on nontrivial cycle lengths}, Discrete Math. \textbf{188} (1993),
45-56. 

\bibitem{Hedlund} Hedlund, G., \textit{Endomorphisms and automorphisms
of the shift dynamical system}, Math. Systems Theory \textbf{3} (1969),
320-375. 

\bibitem{Hua} Hua, L.~K., \textit{Introduction to Number Theory},
Springer-Verlag, 1982, ISBN: 3-540-10818-1. 

\bibitem{Keenan} Kraft, B., Monks, K., \textit{On Conjugacies of
the $3x+1$ Map Induced by Continuous Endomorphisms of the Shift Dynamical
System}, Discrete Math. \textbf{310} (2010), 1875-1883. 

\bibitem{Lagarias} Lagarias, J.~C., \textit{The $3x+1$ problem
and its generalizations}, Am. Math. Monthly \textbf{92} (1985), 3-23. 

\bibitem{Dad} Monks, K.~G., Yasinski, J., \textit{The Autoconjugacy
of the $3x+1$ function}, Discrete Math. \textbf{275} (2004), 219-236. 

\bibitem{Kenny} Monks, K.~M., \textit{The sufficiency of arithmetic
progressions for the $3x+1$ conjecture}, Proc. Amer. Math. Soc.,
134 (10), October (2006), 2861-2872. 

\bibitem{Maria} Monks, M., \textit{Endomorphisms of the shift dynamical
system, discrete derivatives, and applications}, Discrete Math. \textbf{309}
(2009), 5196-5205. 

\bibitem{Sinisalo} Sinisalo, M.~K., \textit{On the minimal cycle
lengths of the Collatz sequences}, preprint, Univ. of Oulu, Finland,
2003. 

\bibitem{Wirsching} Wirsching, G., \textit{The Dynamical System Generated
by the $3n+1$ Function}, Lecture Notes in Math. 1681, Springer-Verlag,
1998, ISBN: 3-540-63970-5. \end{thebibliography}
\end{document}